\newlength{\defbaselineskip} \setlength{\defbaselineskip}{\baselineskip}
\newtheorem{thm}{Theorem}[section]
\newtheorem{cor}[thm]{Corollary}
\newtheorem{corr}[thm]{Corollary}
\newtheorem{lem}[thm]{Lemma}
\newtheorem{prop}[thm]{Proposition}
\newtheorem{prob}[thm]{Problem}
\theoremstyle{definition}
\newtheorem{rem}[thm]{Remark}
\newtheorem{claim}[thm]{Claim}
\tikzset{
  edge node/.code={%
      \expandafter\def\expandafter\tikz@tonodes\expandafter{\tikz@tonodes #1}}}
\tikzset{
  subseteq/.style={
    draw=none,
    edge node={node [sloped, allow upside down, auto=false]{$\subseteq$}}},
  Subseteq/.style={
    draw=none,
    every to/.append style={
      edge node={node [sloped, allow upside down, auto=false]{$\subseteq$}}}
  }
}
 \numberwithin{equation}{section}
\numberwithin{equation}{section} \theoremstyle{definition}
\DeclareMathOperator{\Pic}{Pic}
 \DeclareMathOperator{\Spec}{Spec}
\DeclareMathOperator{\im}{im}
          \newcommand\PP{{\mathbb{P}}}
           \newcommand\QQ{{\mathbb{Q}}}
           \newcommand\U{{\mathcal U}}
          \newcommand\C{{\mathcal{C}  }}
            \newcommand\M{{\mathcal M}}
 \newcommand\F{{\mathcal F}}
\newcommand\E{\mathcal E}
          \newcommand\oo{\mathcal O}
          \newcommand\ZZ{\mathbb{Z}}
          \newcommand\rk{\mathrm{rk}}
          \newcommand\HK{hyper-K\"ahler}
\definecolor{zielony}{rgb}{0.5, 0.9, 0.1}
\definecolor{czerwony}{rgb}{0.8, 0.2, 0.1}
\definecolor{niebieski}{rgb}{0.3, 0.1, 0.9}
\newcounter{appendice}
\author[Grzegorz Kapustka]{Grzegorz Kapustka}
\address{G. Kapustka: Department of Mathematics and Informatics, Jagiellonian University, \L ojasiewicza 6, 30-348, Krak\'ow, Poland}
\email{grzegorz.kapustka@uj.edu.pl}
\author[Micha\l \ Kapustka]{Michal Kapustka}
\address{M. Kapustka: Institute of Mathematics of the Polish Academy of Sciences, ul. Śniadeckich 8, 00-656 Warszawa, Poland}
\email{michal.kapustka@impan.pl}
\begin{document}
\title{Constructions of derived equivalent hyper-K\"ahler fourfolds}

\begin{abstract}
    We study twisted  derived equivalences of \HK{} fourfolds. We describe when two \HK{} fourfolds of $K3^{[2]}$-type of Picard rank $1$ with isometric transcendental lattices 
    are derived equivalent. Then we present new constructions of pairs of twisted derived equivalent \HK{} manifolds of Picard rank $\geq 2$.
\end{abstract}
\subjclass{14J42, 14F08}
\keywords{
\HK{} fourfolds, derived categories, Fourier-Mukai partners}
\maketitle
\section{Introduction}
The study of equivalences of derived categories of smooth projective varieties goes back to the works of Mukai \cite{Mu,M2} who generalised the idea of Fourier transforms to abelian varieties.
 Finding Fourier-Mukai partners, i.e.~non-isomorphic derived equivalent varieties, has since become a central problem in algebraic geometry, closely related to homological mirror symmetry and the study of moduli spaces of vector bundles.

By results of Mukai and Orlov \cite{Mu}, \cite{O}, two K3 surfaces are derived equivalent if and only if their transcendental lattices are Hodge isometric.
In that case, one of the surfaces is a fine moduli space of stable sheaves on the second surface and the Fourier-Mukai kernel of the equivalence is given by the universal family. In a similar way, moduli spaces of twisted sheaves give rise to twisted derived equivalences of K3 surfaces, which induce rational Hodge isometries of transcendental lattices (see \cite{HS1}).
Conversely, if the transcendental lattices of two surfaces are Hodge isometric over $\QQ$ then there is a sequence of twisted derived equivalences connecting them (see \cite{H}).
For hyper-K\"ahler manifolds, that are natural generalisations of K3 surfaces to higher dimension, the situation is more complicated.

In this paper, we study derived equivalence of four dimensional \HK\ manifolds. 
More precisely, we study this problem in the case of one of the two known families of \HK\ fourfolds; the deformations of the Hilbert scheme of two points on a K3 surface called of $K3^{[2]}$-type \cite{B,F}.
Recall that strong restrictions in the classification of hyper-K\"ahler fourfolds where found in \cite{DHMV}.

It was proved in \cite{T} (see \cite[Corollary 9.3]{B}) that two derived equivalent $K3^{[n]}$-type manifolds have isometric transcendental lattices. In this paper we discuss the inverse problem. 
\begin{prob}\label{prob}
    Is it true that if two \HK{} manifolds have isometric transcendental lattices then they are derived equivalent?
\end{prob}
Recall that, by considering  equivalences of Hilbert schemes of points on $K3$ surfaces induced by equivalences between those $K3$ surfaces, non-isomorphic and derived equivalent $K3^{[n]}$-type manifolds with Picard group of rank $\geq 2$ were found in \cite{P,MMY}. Moreover, examples of twisted derived equivalent $K3^{[2]}$-type manifolds that are moduli spaces of sheaves were described in \cite{S,ADM}. In those cases  Problem \ref{prob} has a positive answer. Moreover, we shall see that $K3^{[2]}-$ type manifolds with Hodge isometric transcendental lattices are always twisted derived equivalent. 
 On the other hand, there exist $K3^{[n]}$-type manifolds (for each $n>2$) of Picard number two which have isometric transcendental lattice, but are not derived equivalent (cf. \cite{MM}).
 
In this paper, we discuss Problem \ref{prob} in the case of $K3^{[2]}$-type manifolds of Picard number one.
In particular, we prove in Theorem \ref{main1} below, that manifolds of Picard number one with isometric transcendental lattices are derived equivalent in the cases where their polarisation has suitable degree and divisibility. The remaining cases are treated in Theorem \ref{main2} and lead to twisted derived equivalences.  Building on the negative answer to Problem \ref{prob} for higher Picard number, we expect that our results are sharp in the sense that in cases treated in Theorem \ref{main2}, there should exist pairs of manifolds of Picard number one with polarisation of given degree and divisibility, which have isometric transcendental lattices, but are not derived equivalent.

Concretely, our main results are the following.
\begin{thm}\label{main1}
  Let $(X,L_X)$ and $(Y,L_Y)$ be two polarized \HK\ fourfolds of $K3^{[2]}$-type of Picard number one and degree $2d$ such that $d\equiv 1 \ \mod\ 4$ or $8|d$ or $L_X$ and $L_Y$ have divisibility 2. Then their transcendental lattices  $T_X$ and $T_Y$ are Hodge isometric if and only if $D^b(X)=D^b(Y)$. The number of Fourier-Mukai partners of $X$ is then  {
  $$\begin{cases}
      2^{\tau(d)} \text{ when }\rm{div}(L_X) =1,\\
      2^{\tau(d)-1} \text{ when }\rm{div}(L_X) =2,
  \end{cases}
  $$}  where $\tau(d)$ is the number of different prime divisors of $d$ and $\tau(1)=1$. 
\end{thm}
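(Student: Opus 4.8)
The plan is to prove the two implications separately and then reduce the enumeration of Fourier--Mukai partners to a lattice computation. The implication $D^b(X)=D^b(Y)\Rightarrow T_X\cong T_Y$ is exactly the theorem recalled above (\cite{T}, \cite[Corollary~9.3]{B}) and requires nothing further; all the work is in the converse and in the count. Throughout I identify $H^2(X,\ZZ)$ with the $K3^{[2]}$ lattice and invoke the global Torelli theorem of Verbitsky and Markman, so that an isomorphism class of $(X,L_X)$ of Picard number one corresponds, modulo the monodromy group, to a weight-two Hodge structure on this lattice whose algebraic part is $\langle L_X\rangle\cong\langle 2d\rangle$ of the prescribed divisibility $t=\mathrm{div}(L_X)\in\{1,2\}$. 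For fixed $(d,t)$ the transcendental lattice $T_X$ is then a fixed lattice of signature $(2,20)$, computed from $\langle 2d\rangle$ by Nikulin's discriminant-form formalism, and a Hodge isometry $T_X\cong T_Y$ means that the two periods lie in the same orbit of $O(T_X)$ on the period domain.

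To manufacture a derived equivalence out of such a Hodge isometry I would pass through K3 surfaces. The hypotheses on $d$ and $t$ are precisely the conditions under which $T_X$ is realised, as a lattice, as $T_S\oplus\langle -2\rangle$ with $S$ a Picard-rank-one K3 surface of degree $2d$, the extra $\langle -2\rangle$ being the class that becomes transcendental when one specialises a rank-two deformation to $X$. Concretely, I would exhibit Mukai--Orlov partner surfaces $S,S'$ of Picard number one and degree $2d$ together with simultaneous families specialising the Hilbert schemes $S^{[2]}$, $S'^{[2]}$ to $X$ and $Y$ compatibly with the given isometry $T_X\cong T_Y$, and then deform the Ploog equivalence $D^b(S^{[2]})\cong D^b(S'^{[2]})$ induced by $D^b(S)\cong D^b(S')$ (\cite{Mu}, \cite{O}, \cite{P}, \cite{MMY}) along these families. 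The key point is that the cohomological class of the Fourier--Mukai kernel must stay of Hodge type $(1,1)$ along the degeneration; under the stated arithmetic conditions this class can be kept algebraic and untwisted, whereas in the complementary cases only a twisted lift survives, which is the content of Theorem~\ref{main2}.

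For the count, the two implications identify the Fourier--Mukai partners of $X$ with the isomorphism classes of $(Y,L_Y)$ of the same $(2d,t)$ whose transcendental lattice is Hodge-isometric to $T_X$. By Torelli this is the number of orbits of the monodromy group on the primitive embeddings $\langle 2d\rangle\hookrightarrow\Lambda$ of divisibility $t$ with the given orthogonal complement, equivalently the number of glue maps (anti-isometries) between the discriminant forms of $\langle 2d\rangle$ and of $T_X$, taken modulo the actions of $O(\langle 2d\rangle)$ and $O(T_X)$. This is exactly the arithmetic that produces Oguiso's value $2^{\tau(d)-1}$ for the Fourier--Mukai partners of a Picard-rank-one K3 surface of degree $2d$; the only difference is the extra $\langle -2\rangle$ summand distinguishing the $K3^{[2]}$ lattice from the K3 lattice, which contributes one further factor $\ZZ/2$ of glue maps when $t=1$, giving $2^{\tau(d)}$, and is absorbed by the polarisation when $t=2$, giving $2^{\tau(d)-1}$.

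The main obstacle is the converse implication, since for hyper-K\"ahler fourfolds there is no Orlov-type theorem producing a Fourier--Mukai kernel from a Hodge isometry. The delicate steps are, first, to realise $X$ and $Y$ as the correct specialisations of Hilbert schemes of Mukai--Orlov partner surfaces compatibly with the isometry $T_X\cong T_Y$, and second, to control the deformation of the kernel: one must verify that its class remains of type $(1,1)$ along the family and, crucially, determine when it stays untwisted. Tracking this obstruction through the $2$-adic structure of the discriminant form of $T_X$ is what singles out the conditions $d\equiv 1\bmod 4$, $8\mid d$ and $t=2$, and separates Theorem~\ref{main1} from the twisted statement of Theorem~\ref{main2}.
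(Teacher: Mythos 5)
Your skeleton points in the same general direction as the paper (Taelman--Beckmann for the forward implication, deformation of equivalences of Hilbert squares of Fourier--Mukai partner K3 surfaces for the converse, a discriminant-form count at the end), but the mechanism you propose for the converse has a genuine gap, and in fact would prove the wrong statement. First, the lattice splitting $T_X\cong T_S\oplus\langle -2\rangle$ with $S$ of degree $2d$ exists for \emph{every} $d$ when $L_X$ has divisibility $1$, so the congruences $d\equiv 1 \bmod 4$ and $8\mid d$ cannot be "precisely the conditions" for it; in the paper they arise as the solvability conditions for $a^2\equiv 3d+1 \bmod 4d$, i.e.\ for the existence of isometries whose action on $\operatorname{disc}(T_X)=\ZZ_{2d}\oplus\ZZ_2$ mixes the two summands (Lemma \ref{action on discr}). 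Second, and fatally for your plan: deforming the Ploog/BKR kernel coming from degree-$2d$ partner K3s means taking $l=0$, hence $\tfrac{2l}{r}$ even, and by Lemma \ref{proof-div1} the deformed kernel then carries the nontrivial twist $[\frac{\delta_X}{2}]$, $[\frac{\delta_Y}{2}]$ on both sides --- your one-step construction yields exactly the twisted equivalence of Theorem \ref{main2}, not the untwisted one of Theorem \ref{main1}. The untwisted kernels come only from K3s of degree $2rs=2d+2l^2\neq 2d$ with $\tfrac{2l}{r}$ odd, and a single such $r$-cyclic reflection realises only the summand-mixing discriminant actions; to realise an arbitrary Hodge isometry untwisted the paper must decompose it as $\phi_1\circ\phi_0$ and chain \emph{two} equivalences through an intermediate Picard-rank-one fourfold $Z$ (Proposition \ref{proof-lem4.8}, item (2)). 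This decomposition, the choice of $l\neq 0$, and the intermediate $Z$ are the heart of the proof and are absent from your outline; "keeping the class algebraic and untwisted" is not something one can simply arrange in a single degeneration.

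Two further points. The deformation mechanism itself is underspecified: requiring the kernel's cohomological class to stay of type $(1,1)$ along an algebraic family is neither the paper's method nor sufficient to propagate the equivalence. The paper deforms along twistor paths in Markman's moduli space $\M_\psi$ of Hodge-isometric pairs, using that $\E nd(E_0)$ is poly-stable hyperholomorphic (Verbitsky); the output is a locally free \emph{twisted} sheaf, one needs the stability and cohomology-invariance arguments of Theorem \ref{main markman} to see it still induces an equivalence, and the twist is computed by C\u ald\u araru's formula $\alpha=[-c_1(\E_0)/\rk(\E_0)]$ (Theorem \ref{Cardelaru}), not by a Hodge-type argument. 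Finally, your count is of the right kind and reproduces the numbers in the statement, but as written it is heuristic: the paper reduces the count to the number of solutions of $a^2\equiv 1\bmod 4d$ (divisibility $1$), resp.\ $a^2\equiv 1\bmod d$ (divisibility $2$), and --- caveat --- the paper's own proof in Section \ref{Sec4} yields $2^{\tau(d)}$ in the divisibility-$2$ case (as stated in Theorem \ref{MAIN}), in tension with the $2^{\tau(d)-1}$ of Theorem \ref{main1}; so your claim that the extra $\langle -2\rangle$ glue factor is "absorbed by the polarisation when $t=2$" would need an actual argument rather than an appeal to Oguiso's K3 count.
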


In the case of \HK{} manifolds of $K3^{[2]}$-type of Picard number one of other degrees, we deduce twisted derived equivalence, with explicit twist.
Denote by $D^b(X,[\frac{\delta_X}{2}])$ the derived category of twisted sheaves with twist that is the uniquely defined Brauer class with a $B$-lift $\frac{\delta_X}{2}$ such that $\delta_X$ is a $-2$ class of divisibility $2$ (see Section \ref{brauerz}).

\begin{thm}\label{main2}
    Let $(X,L_X)$ and $(Y,L_Y)$ be two polarized \HK\ fourfolds of $K3^{[2]}$-type of Picard number one and degree $2d$ such that $L_X$ and $L_Y$ are  divisibilty 1, $d$ is not divisible by 8 and  $d \not\equiv 1  \mod 4$ and the transcendental lattices $T_X$ and $T_Y$ are Hodge isometric. Then $D^b(X,[\frac{\delta_X}{2}])=D^b(Y,[\frac{\delta_Y}{2}])$. 
   
\end{thm}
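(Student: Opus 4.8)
The plan is to reduce the assertion to the case of twisted K3 surfaces, for which twisted derived equivalence is governed by Hodge isometries of transcendental lattices by Huybrechts--Stellari \cite{HS1,H}. Concretely, I would attach to each of $X$ and $Y$ a twisted K3 surface, realise the Brauer twist $[\tfrac{\delta_X}{2}]$ as the obstruction to a universal family, and string together the equivalences
\[
D^b\!\left(X,[\tfrac{\delta_X}{2}]\right)\cong D^b(S_X,\alpha_X)\cong D^b(S_Y,\alpha_Y)\cong D^b\!\left(Y,[\tfrac{\delta_Y}{2}]\right),
\]
where the outer two are induced by universal twisted sheaves and the middle one by the given isometry $T_X\cong T_Y$.

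The first and main step is to produce the twisted K3 surface $(S_X,\alpha_X)$ and the realisation of $X$ as a moduli space of $\alpha_X$-twisted sheaves on it. I would work in the Mukai lattice $\widetilde H(X,\ZZ)$ of this $K3^{[2]}$-type manifold, whose transcendental part is canonically $T_X$ and which differs from the Mukai lattice of a K3 surface by an extra $\langle -2\rangle$. Twisting the weight-two Hodge structure by the $B$-field $\tfrac{\delta_X}{2}$ of Section \ref{brauerz} and passing to the algebraic part, one obtains an indefinite rank-three lattice. Since indefinite forms of rank three are isotropic over $\QQ$, the real content is arithmetic: one must exhibit a \emph{primitive integral} isotropic vector $v$ of divisibility exactly $2$ in this twisted algebraic lattice, and none of divisibility $1$. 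This is precisely where the hypotheses $\mathrm{div}(L_X)=1$, $8\nmid d$ and $d\not\equiv 1\bmod 4$ are used: they are the congruence conditions on $d$ modulo $4$ and $8$ complementary to those of Theorem \ref{main1}, under which the untwisted algebraic lattice fails to contain an isotropic vector of divisibility $1$ but the twist by $\tfrac{\delta_X}{2}$ supplies one of divisibility $2$.

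Given such a $v$, the Torelli theorem for $K3^{[2]}$-type manifolds together with the theory of moduli of (twisted) sheaves identifies $X$, through twisted-derived-equivalence-preserving birational modifications, with a moduli space $M_{(S_X,\alpha_X)}(v)$; the Mukai morphism yields the Hodge isometry $\widetilde T(S_X,\alpha_X)\cong T_X$, and the obstruction to a universal sheaf on $X\times S_X$ is computed to be exactly $[\tfrac{\delta_X}{2}]$, so that the universal twisted sheaf is the kernel of the first equivalence. The identical construction applies to $Y$. The hypothesised Hodge isometry $T_X\cong T_Y$ then transports to $\widetilde T(S_X,\alpha_X)\cong\widetilde T(S_Y,\alpha_Y)$, and the twisted global Torelli theorem for K3 surfaces \cite{HS1,H} gives the middle equivalence; composing the three finishes the proof.

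I expect the lattice arithmetic of the second paragraph to be routine once the discriminant form of $T_X$ is computed, and the genuinely delicate points to lie in the third. First, one must pin down the obstruction class on the nose, showing it is exactly the order-two Brauer class with $B$-lift $\tfrac{\delta_X}{2}$ and not another class of the same order, and that the Fourier--Mukai functor lands in $D^b(X,[\tfrac{\delta_X}{2}])$ with the correct (rather than opposite) twist; this is a careful bookkeeping of $B$-fields under the Mukai morphism. Second, since the twisted derived category is not a deformation invariant, one cannot merely deform $X$ to a moduli space: the birational identification with $M_{(S_X,\alpha_X)}(v)$ must itself be promoted to a twisted derived equivalence, e.g.\ by realising it through wall-crossing flops that are known to induce such equivalences. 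These two verifications, not the number theory, are where I expect the weight of the argument to fall.
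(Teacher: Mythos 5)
Your proposal breaks down at its very first step, and the failure is structural, not a matter of bookkeeping. For $X$ of $K3^{[2]}$-type with $\rho(X)=1$ the transcendental lattice $T_X$ has rank $22$, whereas for any moduli space $M_v(S,\alpha)$ of twisted sheaves on a \emph{projective} K3 surface one has $T(M_v(S,\alpha))\simeq T(S,\alpha)\subset T(S)$, of rank at most $21$; equivalently, $\rho(M_v(S,\alpha))=\rho(S)+1\geq 2$. So no $K3^{[2]}$-type fourfold of Picard number one is birational to a moduli space of twisted sheaves on a K3, and the primitive isotropic vector you want in the twisted algebraic part of the extended Mukai lattice simply does not exist for \emph{any} $d$ --- the congruence conditions $8\nmid d$, $d\not\equiv 1 \bmod 4$ play no role in this obstruction, so the arithmetic step you called ``routine'' is in fact vacuous. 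There is also a second, independent impossibility in your chain: a twisted Fourier--Mukai equivalence preserves dimension (compare Serre functors, or Hochschild homology), so $D^b\!\left(X,[\frac{\delta_X}{2}]\right)\cong D^b(S_X,\alpha_X)$ with $\dim X=4$ and $\dim S_X=2$ can never hold. A universal family on $X\times S_X$ would only induce a functor $D^b(S_X,\alpha_X)\to D^b(X,\beta)$, and for four-dimensional moduli such functors are not equivalences (in the untwisted Hilbert-scheme case they are $\PP$-functors, not equivalences); the Mukai--C\u ald\u araru--Yoshioka picture you are transplanting is special to two-dimensional moduli.

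The paper's proof is forced into a completely different shape precisely because Picard-rank-one fourfolds are not moduli spaces: one first extends $\phi\colon T_X\to T_Y$ to a rational Hodge isometry and shows (Proposition \ref{proof-lem4.8}, case (1), via the discriminant computation of Lemma \ref{action on discr}) that for $d\equiv 2,3,4,6,7 \bmod 8$ it is, up to integral autoisometries of $\Lambda$, a single reflection $\rho_r\colon T_{r,s,l}\to T'_{r,s,l}$ with $\frac{2l}{r}$ even; then the Fourier--Mukai kernel is obtained not from a universal family but by deforming a BKR kernel on $S^{[2]}\times M^{[2]}$ along twistor paths using Markman's hyperholomorphic machinery (Theorems \ref{Markman def} and \ref{main markman}), and the Brauer twists are read off from C\u ald\u araru's deformation theorem (Theorem \ref{Cardelaru}) applied to $c_1(\mathcal K_{S^{[2]},M^{[2]}})/2r^2$, whose $B$-lifts are $(\frac{l}{r}\mp\frac{1}{2})\delta$, giving exactly $[\frac{\delta_X}{2}]$ and $[\frac{\delta_Y}{2}]$ (Lemma \ref{proof-div1}, case (2)). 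Your instinct that the congruences on $d$ are what separate the twisted from the untwisted conclusion is correct, but the mechanism is the parity of $\frac{2l}{r}$ forced by the discriminant action, not the divisibility of an isotropic vector; and the deformation theory you hoped to bypass is exactly what makes the Picard-rank-one case reachable at all.
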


The theorems are proved at the end of Section \ref{Sec4} as Theorem \ref{MAIN}.
The idea of the proof is to proceed via deformation of Fourier-Mukai kernels starting from special examples. First, having an isometry between transcendental lattices $T_X$ and $T_Y$ of two \HK{} fourfolds $X$, $Y$ of Picard rank 1 we extend it to a rational Hodge isometry between $H^2(X,\QQ)$ and $H^2(Y,\QQ)$. Next, we realise this rational isometry as a rational isometry given by a Fourier-Mukai kernel on $X'\times Y'$ 
of an equivalence of derived categories of Hilbert squares of K3 surfaces $X'$ and $Y'$. Then, following \cite[\S 5]{Mar1} (using hyperholomorphic sheaves \cite{V}), we deform the Fourier-Mukai kernel on $X'\times Y'$ along a twistor path on a moduli space of Hodge isometric pairs of  manifolds (see Section \ref{EPW2})  to a twisted sheaf on $X\times Y$. 
Finally, we study the Brauer class of the latter twisted sheaf and use a general result on deformation of Fourier-Mukai kernels (cf. Theorem \ref{main markman}) to prove that it induces a derived (or twisted derived) equivalence between $X$ and $Y$.

 The case of higher Picard rank is discussed in Section \ref{EPW4}. Here, new technical difficulties arise since we cannot compose a derived equivalence as in Theorem \ref{main1} with a twisted derived equivalence as in Theorem \ref{main2}, see \cite[Remark 1.3]{H}. In this case, we propose in Section \ref{convolution} another construction of derived equivalence along Lagrangian fibrations.
 This is a generalisation in Theorem \ref{Main} of \cite[Theorem A]{ADM} to the case of moduli spaces of twisted sheaves. 
 The construction works also in higher dimension and gives rise to various examples of twisted derived equivalent $K3^{[n]}$-type manifolds.
In particular, we are able to show the derived equivalence of the following known pair of \HK{} fourfolds with Picard rank $2$ and isomorphic transcendental lattices.
\begin{prop}
 A very general double EPW quartic, i.e.~a $K3^{[2]}$-type fourfold with Picard lattice $U(2)$, is derived equivalent to a special double EPW sextic, i.e.~a $K3^{[2]}$-type fourfold with Picard lattice $(2)\oplus (-2)$ such that the generator $(-2)$ is of divisibility $1$.
\end{prop}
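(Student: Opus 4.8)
The plan is to realise the very general double EPW quartic $X$ as a Lagrangian fibration and to produce the special double EPW sextic $Y$ as a relative moduli space of (twisted) sheaves on its fibres, to which the convolution construction of Section \ref{convolution} applies. First I would record the lattice-theoretic input. Writing $\Lambda=U^3\oplus E_8(-1)^2\oplus\langle -2\rangle$ for the $K3^{[2]}$ lattice, we have $NS(X)=U(2)$ and $NS(Y)=\langle 2\rangle\oplus\langle -2\rangle$ with the generator of $\langle -2\rangle$ of divisibility $1$. Using Nikulin's gluing of discriminant forms, in the convention of Section \ref{brauerz}, I would check that the two primitive embeddings into $\Lambda$ have isometric orthogonal complements, so that $T_X\cong T_Y$; the divisibility-$1$ hypothesis on the $\langle -2\rangle$ generator is precisely what makes the discriminant forms of the two complements agree. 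Since $X$ and $Y$ are taken with isometric transcendental Hodge structures, this identification is a Hodge isometry.

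Next, the class $f\in U(2)=NS(X)$ with $q(f)=0$ is isotropic and primitive. As $U(2)$ represents no $(-2)$-class, $f$ is (up to sign and the monodromy action) nef, and by the SYZ property for $K3^{[2]}$-type manifolds it induces an honest Lagrangian fibration $\pi\colon X\to\PP^2$ whose general fibre is an abelian surface. I would then apply Theorem \ref{Main}: the relative moduli space of (twisted) sheaves on the fibres of $\pi$, for the Mukai vector dual to the structure sheaf of a fibre, is again a $K3^{[2]}$-type fourfold $Y'$ carrying a natural (twisted) Fourier--Mukai kernel supported on $X\times_{\PP^2}Y'$, hence a twisted derived equivalence $D^b(X,\alpha)\cong D^b(Y',\beta)$. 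Computing the induced action on $\Lambda$ shows $NS(Y')=\langle 2\rangle\oplus\langle -2\rangle$ with the required divisibilities, so by the Torelli theorem $Y'$ has the period of a double EPW sextic; matching the degree-$2$, divisibility-$1$ polarisation identifies $Y'$ with the special double EPW sextic $Y$.

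The main obstacle is to show that the resulting equivalence is untwisted, i.e.\ that both Brauer classes $\alpha,\beta$ vanish. Here I would use that the $\langle -2\rangle$ generator on $Y$ has divisibility $1$: by the description of the Brauer group in Section \ref{brauerz}, the $B$-field lift attached to the relative construction is then integral, so the associated Brauer class is trivial, and symmetrically the twist produced on the $X$-side by the relative Jacobian vanishes. With $\alpha=\beta=0$ the kernel on $X\times_{\PP^2}Y$ is an ordinary coherent sheaf, and Theorem \ref{main markman} on deformation of Fourier--Mukai kernels upgrades it to a genuine derived equivalence $D^b(X)\cong D^b(Y)$. The two delicate points are the explicit identification of the relative moduli space with the double EPW sextic and the vanishing of the Brauer twist; both rely crucially on the divisibility-$1$ condition singled out in the statement, and it is exactly this condition that distinguishes the present (untwisted) situation from the merely twisted equivalences of Theorem \ref{main2}.
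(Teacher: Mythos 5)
Your overall route --- realising the pair as relative (twisted) Picard fibrations over $\PP^2$ and invoking Theorem \ref{Main} --- is the same as the paper's, which constructs $X_1$ (the quartic) and $X_0$ (the sextic) as moduli spaces $M_{(0,h,0)}(S,B)$ and $M_{(0,h,1)}(S,B)$ of twisted sheaves on a fixed degree-$2$ twisted K3 surface, using \cite[Theorem 5.1]{CKKM} and Lemma \ref{relPic}. But your proposal has a genuine gap exactly at the step you yourself flag as the main obstacle: the vanishing of the Brauer twists. You assert that the divisibility-$1$ condition on the $(-2)$-class makes ``the $B$-field lift attached to the relative construction integral'' via Section \ref{brauerz}. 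No such lattice-theoretic shortcut is available: the twists appearing in Theorem \ref{Main} are the gluing classes $\alpha_n^{-m}\times\alpha_m^{-n}$ of Proposition \ref{twistedFM}, and deciding their triviality is not a formal consequence of the divisibility of a class in $\operatorname{NS}$. Indeed, the paper's Proposition \ref{universal} shows only that the obstruction class on a moduli space with a BN contraction is \emph{either} trivial \emph{or} has $B$-lift $\frac{\delta}{2}$ with $\delta$ of divisibility $2$; discriminating between the two cases for the double EPW sextic is precisely the content of Proposition \ref{uni}, whose proof is a substantial geometric construction (an explicit Poincar\'e-type sheaf on $X_0\times_{\PP^2}X_1$ built from the incidence of lines and $(1,1)$-conics in the associated Verra fourfold, together with the classical projective self-duality of Kummer quartic surfaces extended over the whole Lagrangian fibration via \cite{IKKR}). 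Your one-line argument does not substitute for this, and the introduction of the paper explicitly identifies it as the technical core of the result.

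A second, smaller error: your final appeal to Theorem \ref{main markman} is misplaced. That theorem concerns locally free kernels of BKR type deformed along twistor paths as (projectively) hyperholomorphic bundles, and the paper opens Section \ref{EPW4} by explaining that this machinery \emph{cannot} be used for this Picard-rank-$2$ pair: the kernel here has rank $0$ (it is supported on the fiber product $X_0\times_{\PP^2}X_1$), and the induced automorphism of the discriminant group $\ZZ_2^3$ is unipotent, not a specialisation of a reflection, so the equivalence is not a deformation of an $r$-cyclic one. Once the twist is shown to vanish (Proposition \ref{uni}), the untwisted equivalence follows directly from the Arinkin-type argument underlying Theorem \ref{Main} (Claim \ref{main}); no deformation-theoretic upgrade is needed, nor would one be available. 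To repair your write-up you would need to (i) establish the identification of the relative moduli space with the moduli space of twisted sheaves on a K3 (rather than assume it), and (ii) replace the divisibility heuristic by an actual construction or computation of the universal family, as in Proposition \ref{uni}.
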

The proof can be found in Section \ref{EPWquartic}. The technical difficulty is the computation in Proposition \ref{uni} of the Brauer class that is the obstruction on the double EPW sextic (isomorphic to the moduli space of twisted sheaves $M_{(2,2B+h,1)}(S,B)$) for the existence of the universal family on $S\times M_{(2,2B+h,1)}(S,B)$, where $(S,B)$ is the related twisted K3 surface of degree $2$.

Finally, we describe in Section \ref{convolution} a third construction of twisted derived equivalences that is a mixture of the above constructions.
We consider deformations of convolutions of two fiberwise equivalences along Lagrangian fibrations.
We infer in this way locally free Fourier-Mukai kernels that again deform as hyperholomorphic sheaves giving twisted equivalences of \HK\ manifolds of Picard rank $1$. Recall that from \cite{KP} the related dual Gushel-Mukai fourfolds (i.e.~associated to dual Lagrangian spaces) are derived equivalent. Here we deduce, both from Theorem \ref{main1} and the latter construction, that the two corresponding dual double EPW sextics are also derived equivalent. We end the paper with a list of questions and open problems.

\subsection*{Acknowledgement} We thank N.~Addington, E.~Markman, D.~Mattei, M.~Mauri, T.~Wawak and Ruxuan Zhang for helpful discussions and K.~Grzelakowski and C.~Tschanz for comments. \\
G.K.~is supported by the project Narodowe Centrum Nauki 2018/30/E/ST1/00530. M.K.~is supported by the project Narodowe Centrum Nauki 2018/31/B/ST1/02857.

\section{Preliminary}

In this section we introduce some technical tools that are needed in the paper.
The main method considered in this paper is the deformation a derived equivalence between two fixed \HK{} manifolds to a general pair. The problem is that we infer in this way twisted derived equivalences and we have to deal with derived categories of twisted sheaves (see \cite{Ca,Y}). 
For this reason we start by discussing Brauer classes on hyper-K\"ahler manifolds adapting the approach described in \cite{HS,vGK, K} from the case of K3 surfaces. 

 \subsection{Brauer classes}\label{brauerz} The cohomological Brauer group of a scheme $X$ is the group $$Br'(X)=H^2_{et}(\oo_X^{\ast})_{\operatorname{tors}}.$$ 
 It is known that for projective schemes this group is isomorphic to the usual Brauer group $Br(X)$ of equivalences of Azumaya algebras \cite{CE,GS}.
Let $X$ be a hyper-K\"ahler manifold with $H^3(X,\ZZ)=0$ (for example of $K3^{[n]}$-type) and $\beta\in  H^2(X,\oo_X^{\ast})_{\operatorname{tors}}$ be a Brauer class.
 From the exponential sequence

$$0  \to \mathbb Z   \to \mathcal{O}_X \xrightarrow{\exp(2\pi i(-))}  \mathcal{O}_X^{\ast}   \to 0, $$
and by the fact $H^3(X,\mathbb Z)=0$, we infer that there exists $\alpha\in H^2(X,\mathcal{O}_X),$ such that $$\exp(2\pi i(\alpha))=\beta.$$
If the order of $\beta$ is $k$ 
then $k\alpha$ is the image of some class $B_0\in H^2(X,\mathbb{Z})$ and we call $B=\frac{B_0}{k}\in \frac{1}{k}H^2(X,\mathbb{Z})$ a $B$-lift of $\beta$.
Moreover, we denote by $ [ \frac{B_0}{k} ] \in H^2(X,\oo_X^{\ast})$ the Brauer class corresponding to $\frac{B_0}{k}$.
Note that the above class $B$ is not unique and we have the ambiguity of adding a class in $H^2(X,\mathbb{Z})$ and a class in $H^{1,1}(X,\mathbb{Z}/k)$.  
The following is straightforward.
\begin{lem}
    The  Brauer class with $B$-lift $\frac{\delta}{2}$ such that $\delta$ is a $-2$ class of divisibility $2$ is uniquely defined, i.e.~does not depend of the choice of $\delta$.
\end{lem}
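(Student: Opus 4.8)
The plan is to unwind the definition of the Brauer class attached to a $B$-lift and reduce the statement to a short lattice-theoretic computation in $\Lambda := H^2(X,\ZZ)$. Let $\pi\colon H^2(X,\ZZ)\to H^2(X,\oo_X)$ be the map appearing in the exponential sequence; concretely, after complexifying and using the Hodge decomposition together with the identification $H^2(X,\oo_X)\cong H^{0,2}(X)$, the map $\pi$ sends $v$ to its $(0,2)$-component $v^{0,2}$. By definition the Brauer class with $B$-lift $\tfrac{\delta}{2}$ is $\exp\!\left(2\pi i\,\pi\!\left(\tfrac{\delta}{2}\right)\right)=\exp\!\left(2\pi i\,\left(\tfrac{\delta}{2}\right)^{0,2}\right)$. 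Since $\delta$ is integral, $2\pi\!\left(\tfrac{\delta}{2}\right)=\pi(\delta)$ lies in the image of $\pi$, so this class is $2$-torsion and the expression is well defined.

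First I would record the exact criterion for two such classes to agree. From the exponential sequence $0\to\ZZ\to\oo_X\to\oo_X^{\ast}\to 0$ together with the hypothesis $H^3(X,\ZZ)=0$, the kernel of $\exp(2\pi i\,(-))\colon H^2(X,\oo_X)\to H^2(X,\oo_X^{\ast})$ is exactly the image of $\pi$. Hence, given two $-2$ classes $\delta,\delta'$ of divisibility $2$, the associated Brauer classes coincide if and only if $\left(\tfrac{\delta-\delta'}{2}\right)^{0,2}$ lies in the image of $\pi$, for which it suffices that $\tfrac{\delta-\delta'}{2}$ itself be an integral class.

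The heart of the argument is then the claim that $\tfrac{\delta-\delta'}{2}\in\Lambda$. For $K3^{[2]}$-type the discriminant group $A_\Lambda=\Lambda^{\ast}/\Lambda$ is $\ZZ/2$. Divisibility $2$ means $\left(\tfrac{\delta}{2},x\right)\in\ZZ$ for all $x\in\Lambda$, so $\tfrac{\delta}{2}\in\Lambda^{\ast}$; moreover $\tfrac{\delta}{2}\notin\Lambda$, since otherwise $\left(\tfrac{\delta}{2}\right)^2=\tfrac14\delta^2=-\tfrac12$ would contradict integrality of the lattice. Thus $\tfrac{\delta}{2}$ represents the unique nonzero element of $A_\Lambda$, and the same holds for $\tfrac{\delta'}{2}$. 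Consequently $\tfrac{\delta}{2}-\tfrac{\delta'}{2}\equiv 0$ in $A_\Lambda$, i.e.\ $\tfrac{\delta-\delta'}{2}\in\Lambda$, which is exactly what the criterion of the previous paragraph requires, giving the equality of the two Brauer classes.

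I expect no serious obstacle here: the only point requiring care is the identification of the discriminant group of the $K3^{[2]}$ lattice and the verification that a $-2$ class of divisibility $2$ necessarily represents the nonzero generator of $A_\Lambda$, which is precisely where the norm constraint $\delta^2=-2$ is used to rule out $\tfrac{\delta}{2}\in\Lambda$. The argument invokes only $H^3(X,\ZZ)=0$ and the lattice structure, so it applies uniformly to every $X$ of $K3^{[2]}$-type.
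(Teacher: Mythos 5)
Your proof is correct, and it reaches the paper's key integrality statement by a different mechanism. The paper's proof is a two-line lattice argument: it uses the explicit orthogonal splitting $H^2(X,\ZZ)\simeq \Lambda_{K3}\oplus\langle -2\rangle$ (unimodular part plus a rank-one summand generated by a $-2$ class of divisibility $2$) to conclude directly that any two such classes $\delta_1,\delta_2$ satisfy $\delta_1-\delta_2\in 2H^2(X,\ZZ)$, and leaves implicit why this forces $[\frac{\delta_1}{2}]=[\frac{\delta_2}{2}]$ (this is the ambiguity of $B$-lifts recorded just before the lemma). You instead argue intrinsically through the discriminant group: divisibility $2$ gives $\frac{\delta}{2}\in\Lambda^{\ast}$, the norm condition $\delta^2=-2$ rules out $\frac{\delta}{2}\in\Lambda$, and since $A_\Lambda\cong\ZZ/2$ both $\frac{\delta}{2}$ and $\frac{\delta'}{2}$ represent the unique nonzero element, whence $\frac{\delta-\delta'}{2}\in\Lambda$ — the same conclusion as the paper, obtained without choosing a splitting. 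You also make explicit the exponential-sequence criterion (kernel of $\exp(2\pi i(-))$ equals the image of $H^2(X,\ZZ)$) that justifies the final step, which the paper treats as straightforward; note, though, that this kernel identification follows from exactness alone — the hypothesis $H^3(X,\ZZ)=0$ is not needed there, but rather to guarantee that every torsion Brauer class admits a $B$-lift in the first place. Your route is slightly longer but more self-contained, and it isolates exactly where each hypothesis is used: divisibility $2$ puts $\frac{\delta}{2}$ in $\Lambda^{\ast}$, and $\delta^2=-2$ (hence non-integrality of $\frac{\delta}{2}$) pins down its class in $A_\Lambda$; the paper's splitting argument is shorter but bundles these facts into the explicit decomposition.
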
 
\begin{proof}
 Observe that $H^2(X,\mathbb Z)$ decomposes as an orthogonal sum of a unimodular lattice and a 1-dimensional lattice generated by a -2 class of divisibility 2. It follows that any two -2 classes $\delta_1$, $\delta_2$ of divisibility 2 on $H^2(X,\mathbb Z)$ differ by a class from $2H^2(X,\mathbb Z)$ and hence $[\frac{\delta_1}{2}]=[\frac{\delta_2}{2}]$.
\end{proof}

\subsection{Deformation of a derived equivalence}\label{EPW2}

Nick Addington communicated to us the following general result concerning deformations of Fourier-Mukai kernels.
\begin{prop}\label{Nick}
For a flat family of coherent sheaves on a projective family of products of manifolds $X_t\times Y_t$ with $t\in B$ to be an equivalence (or
fully faithful) is an open condition.
\end{prop}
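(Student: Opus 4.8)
The plan is to reformulate full faithfulness and being an equivalence as the statement that a single, canonically defined morphism of Fourier--Mukai kernels is an isomorphism, and then to deduce openness from semicontinuity together with properness of the family. Write the family as $\mathcal X\times_B\mathcal Y\to B$ with smooth projective factors $\mathcal X\to B$ and $\mathcal Y\to B$, fibres $X_t,Y_t$, and let $\E$ be the given $B$-flat coherent kernel, with fibrewise functor $\Phi_t=\Phi_{\E_t}\colon D^b(X_t)\to D^b(Y_t)$. Each $\Phi_t$ has a right adjoint $\Phi_t^R$, again a Fourier--Mukai functor, whose kernel $\E_t^R$ is the derived dual $\E_t^{\vee}$ Serre-twisted along the $Y_t$-factor; and $\Phi_t$ is \emph{fully faithful} iff the unit $\eta_t\colon\mathrm{id}\to\Phi_t^R\circ\Phi_t$ is an isomorphism, while $\Phi_t$ is an \emph{equivalence} iff moreover the counit $\varepsilon_t\colon\Phi_t\circ\Phi_t^R\to\mathrm{id}$ is an isomorphism.

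I would then spread these over the base. Because the factors are smooth and projective and $\E$ is $B$-flat, the relative dual and the relative convolution are defined, giving $\E^R*_B\E\in D^b(\mathcal X\times_B\mathcal X)$, the kernel of the relative composition $\Phi^R\circ\Phi$, together with a relative unit $u\colon\oo_{\Delta_{\mathcal X/B}}\to\E^R*_B\E$. Completing $u$ to a triangle $\oo_{\Delta_{\mathcal X/B}}\xrightarrow{u}\E^R*_B\E\to C\xrightarrow{+1}$ and restricting to a fibre, $\Phi_t$ is fully faithful iff $\eta_t$ is invertible iff the induced endofunctor of $D^b(X_t)$ with kernel $C_t:=L\iota_t^*C$ vanishes iff $C_t=0$, the last equivalence because a Fourier--Mukai functor with nonzero kernel is nonzero (evaluate on skyscrapers and recover $C_t$ slice by slice). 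An analogous relative counit $v\colon\E*_B\E^R\to\oo_{\Delta_{\mathcal Y/B}}$ on $\mathcal Y\times_B\mathcal Y$, with cone $C'$, encodes $\varepsilon_t$ in the same way. This packaging is exactly what lets us avoid a direct verification of the Bondal--Orlov conditions on pairs $(x_1,x_2)\in X_t\times X_t$, where the off-diagonal locus is not proper over $B$ and the projection argument below would break down.

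The openness is then formal. Taking $B$ smooth—which is all that is needed in the deformation arguments where the Proposition is applied—the total space $\mathcal X\times_B\mathcal X$ is smooth, so $C$ is a perfect complex; for each $i$ the function $w\mapsto\dim_{k(w)}\mathcal H^i(C\otimes^{L}k(w))$ is upper semicontinuous, hence the locus $U=\{w:C\otimes^{L}k(w)=0\}$, on which these finitely many functions all vanish, is open. Since $L\iota_t^*C$ is a perfect complex on the fibre whose derived fibre at each $w$ is $C\otimes^{L}k(w)$, one has $C_t=0$ exactly when $(\mathcal X\times_B\mathcal X)_t\subseteq U$; as $\mathcal X\times_B\mathcal X\to B$ is proper, the image in $B$ of the closed set $Z=(\mathcal X\times_B\mathcal X)\setminus U$ is closed, and the set of $t$ whose fibre lies in $U$ is its complement, hence open. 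This gives openness of full faithfulness, and intersecting with the open locus obtained from $C'$ gives openness of the equivalence condition.

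The main obstacle is the compatibility asserted in the second paragraph: that formation of the relative dual, the relative convolution, and the relative unit all commute with derived restriction to fibres, so that $u_t$ genuinely induces the adjunction unit $\eta_t$ of $\Phi_{\E_t}$ and $C_t$ is genuinely the cone measuring its failure. This is precisely where the hypotheses enter—$B$-flatness of $\E$, smoothness of the fibres (so that Serre duality, the relative dualizing sheaf, and perfection behave well in the family), and projectivity (so that the pushforwards defining the convolution are proper and commute with base change). Granting the relative Fourier--Mukai formalism that yields these compatibilities, everything else reduces to the soft semicontinuity-and-properness argument above; I expect the bookkeeping identifying the fibrewise (co)units with the restrictions of $u$ and $v$ to be the only genuinely technical work.
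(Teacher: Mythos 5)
Your proposal follows essentially the same route as the paper's proof: both encode the unit and counit of the adjunction as morphisms of (relative) Fourier--Mukai kernels, reduce full faithfulness and equivalence to the vanishing of the cones on these kernel maps, and deduce openness in $B$ from properness of the family. The only cosmetic difference is that you establish openness of the fibrewise vanishing locus via semicontinuity for perfect complexes (whence your smoothness assumption on $B$), whereas the paper simply notes that this locus is the complement of the image of the support of the cone, which is closed and maps to a closed set under the proper projection --- so no perfectness or smoothness hypothesis is needed.
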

\begin{proof}
Let $X$ and $Y$ be smooth projective varieties.  Let $E$ in $D(X \times Y)$ induce a
functor $F: X \to Y$.  Then the left and right adjoints $L$ and $R$ are induced
by $E^{\ast}$ tensor $\omega_Y[\dim Y]$ and $\omega_X[\dim X]$, respectively, and the
unit and counit

       $$ \operatorname{id}_X \implies R \circ F$$
        $$ F \circ L \implies \operatorname{id}_Y$$

are induced by maps of kernels (\cite[Section 3.1]{CW}).  
Now $F$ is an equivalence if and only if this unit and counit are
isomorphisms, which happens if and only if the cones on the corresponding maps of kernels are zero.

In the case of smooth projective families $X \to B$ and $Y \to B$, we can act analogously, and we are asking about the locus in $B$
over which those cones are zero, i.e.~the complement of the image of their
support.  But the support is closed and our families are proper so the
complement of the image of the support is open.
\end{proof}

Note that the above proposition works only for local deformations and generally for coherent sheaves.
In this paper we need global deformations and work with twisted sheaves thus we consider below a finer variant of the previous proposition.

Our method consists of deforming a given Fourier-Mukai kernel giving a derived equivalence between two fixed hyper-K\"ahler manifolds that are Hilbert schemes of points on a K3 surface. Then spread it to twisted projectively hyperholomorphic sheaves over elements of the considered below moduli space of pairs of hyper-K\"ahler manifolds. 
The last step is to refine Proposition \ref{Nick} in this context and prove that it gives a twisted derived equivalence at every element in this moduli.
Let us be more precise; the following is build on \cite{Mar1} and we start by recalling some notions introduced there.

Denote by $\Lambda_n$ the Beauville Bogomolov Fujiki lattice of \HK{} fourfolds of $K3^{[n]}$-type. For  $\psi:\Lambda_n\otimes \QQ\to \Lambda_n\otimes \QQ$ a rational isometry,
we consider the moduli space $\M_{\psi}$ of pairs $X_t$ and $X_t'$ of marked \HK{} fourfolds of $K3^{[n]}$-type such that  $\psi$ represents via the markings a rational Hodge isometry between  $X_t$ and $X_t'$ (see \cite[Section 5.2]{Mar1} and \cite{O2}).
We assume that we have a twisted vector bundle $E_0$ on $X_0\times X_0'$ that gives a twisted derived equivalence and is compatible with this isometry \cite[Definition 5.20]{Mar1}.

Recall that a poly-stable bundle is a direct sum of stable bundles with the same slope.
It is proven that two points of $\M_{\psi}^0$ are connected by a sequence of generic twistor lines (\cite[Lemma 5.15]{Mar1}).
In this way, if $\E nd (E_0)$ is poly-stable hyperholomorphic with respect to an open set of K\"ahler classes as in \cite[Proposition 5.19]{Mar1} then for every $t$ in the connected component $\M_{\psi}^0$ of $\M_{\psi}$  there is a twisted vector bundle $E_t$ on $X_t\times X'_t$ such that the  Azumaya algebras $\mathcal{E}nd(E_t)$ and $\mathcal{E}nd(E_{0})$ are deformation equivalent.
By acting step by step we consider the case of a deformation along one twistor line as in \cite[Proposition 5.12]{MarB}). Note also that on one twistor line we obtain a twisted sheaf $\E$  such that $\mathcal{E} nd(\E)$ is a hyperholomorphic sheaf on this twistor family that restricts to $\E nd(E_0)$ above $X_0\times X_0'$.

 Recall also that a BKR equivalence is a vector bundle on $S^{[n]}\times M^{[n]}$ naturally obtained from a Fourier-Mukai kernel between two derived equivalent projective K3 surfaces $M$ and $S$
as in \cite[\S 7]{Mar1} (see also Section \ref{section r cyclic}).
Suppose $E_0$ on $X_0\times X_0'$ is obtained as a deformation along a twistor path of a BKR equivalence.
In this case, the sheaf $\E nd (E_0)$ is appropriately poly-stable hyperholomorphic from \cite[\S 6 and \S 8]{Mar1} thus for every $t\in \M^0_{\psi}$ gives rise to twisted bundles $E_t$ on $X_t\times X_t'$. 
For an introduction to twisted sheaves and derived categories of twisted sheaves see \cite{Ca,CS,HS,Y}.

\begin{thm}\label{main markman}
 Suppose that the sheaf $E_0$ on $ X_0\times X'_0$ is the Fourier-Mukai kernel of a BKR equivalence $D(X_0)=D(X_0')$. 
 Then, for $t\in \M_{\psi}^0$, a twisted bundle $E_t$ obtained via the above construction gives rise to a twisted derived equivalence between $D(X_t,\beta)=D(X_t',\beta')$ with twists $\beta\in Br(X_t)$, $\beta'\in Br(X_t')$ given by the Azumaya algebra $\E nd(E_t)$. 
\end{thm}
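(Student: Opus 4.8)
The plan is to show that the twisted Fourier--Mukai functor $\Phi_{E_t}\colon D(X_t,\beta)\to D(X_t',\beta')$ with kernel $E_t$ is an equivalence for every $t\in\M_\psi^0$, by deforming the equivalence at the base point along twistor lines. The base case is $t=0$: since $E_0$ is the kernel of a BKR equivalence, $\Phi_{E_0}$ is an honest equivalence $D(X_0)\xrightarrow{\sim}D(X_0')$.

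By \cite[Lemma 5.15]{Mar1} any two points of $\M_\psi^0$ are joined by a finite chain of generic twistor lines, so it suffices to propagate the equivalence along a single such line; I then conclude for all $t$ by induction along the chain. Along one twistor line we have, as recalled above, a proper (in general non-projective) family $\mathcal{X}\times_{\PP^1}\mathcal{X}'\to\PP^1$ and the hyperholomorphic sheaf $\E nd(\E)$ restricting to $\E nd(E_0)$ over the base point, together with the twisted bundles $E_t$ on the fibers (\cite[Proposition 5.12]{MarB}, \cite{V}).

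I would first set up the twisted analogue of Proposition \ref{Nick}. The twisted Fourier--Mukai formalism (\cite{Ca,CS,Y}) furnishes left and right adjoints of $\Phi_{E_t}$ as twisted Fourier--Mukai functors whose kernels are the dual of $E_t$ tensored by the relative dualizing sheaves, and the unit and counit are induced by morphisms of twisted kernels exactly as in the proof of Proposition \ref{Nick}. Hence $\Phi_{E_t}$ is an equivalence precisely when the cones $C_t$ and $C_t'$ of these two morphisms of kernels vanish. Since the twistor family is proper, the argument of Proposition \ref{Nick} applies verbatim to show that the locus of $t\in\PP^1$ with $C_t=C_t'=0$ is open, being the complement of the image of the (closed) supports of the cones.

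The crux is to upgrade this openness to the whole connected twistor line, and here the hyperholomorphic structure is decisive. The kernels $C_t$, $C_t'$ are obtained from $\E$ and its dual by convolution, tensoring with dualizing sheaves and passage to cones, all of which preserve the hyperholomorphic property along the twistor line; consequently the dimensions of the hypercohomology groups of the cones are deformation invariant. Therefore the vanishing of $C_0$ and $C_0'$ at the base point forces $C_t=C_t'=0$ for every $t$ on the line, so $\Phi_{E_t}$ is an equivalence throughout. The main obstacle is exactly this last step: ensuring, in the twisted setting, that the ``equivalence defect'' propagates as a hyperholomorphic object whose vanishing at one fibre is inherited by all fibres, for which the poly-stability and hyperholomorphicity established in \cite[\S\S6,8]{Mar1} are essential. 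Finally, the twists $\beta\in Br(X_t)$ and $\beta'\in Br(X_t')$ are read off from the Brauer class of the Azumaya algebra $\E nd(E_t)$, yielding the asserted identification $D(X_t,\beta)=D(X_t',\beta')$.
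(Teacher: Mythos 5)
Your proposal has a genuine gap at exactly the step you yourself flag as the crux. To get from openness (which is fine, and is essentially Proposition \ref{Nick}) to the whole twistor line, you assert that the cones $C_t$, $C_t'$ of the unit/counit kernel maps are hyperholomorphic objects with deformation-invariant hypercohomology, because convolution, tensoring with dualizing sheaves and passage to cones ``preserve the hyperholomorphic property''. No such preservation statement exists: Verbitsky's invariance result \cite[Corollary 8.1]{V1} is a statement about \emph{poly-stable hyperholomorphic vector bundles} (bundles with Hermitian--Einstein connections compatible with the entire twistor sphere), and this differential-geometric structure is not inherited by cones of morphisms of convolved complexes; there is no ``hyperholomorphic triangulated calculus'' to invoke, and cohomology dimensions of general coherent complexes do jump in families. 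A second, related problem is that the adjoint-and-kernel formalism you import from \cite{CW} (and from Proposition \ref{Nick}) is established for smooth \emph{projective} varieties, whereas the generic fibre of a twistor line is a non-algebraic hyper-K\"ahler manifold, on which representability of the adjoints by twisted kernels, and the convolutions defining $C_t$, $C_t'$, are not available off the shelf. So both pillars of your global step fail precisely where they are needed.

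The paper's proof is designed to avoid this: it never manipulates functors or cones along the twistor line. Instead it uses C\u ald\u araru's pointwise criterion \cite[3.2.1]{Ca1}: the kernel $E_t$ induces an equivalence once $\operatorname{Ext}^k(E_t^p,E_t^q)=0$ for $k>0$ or $p\neq q$, and $\operatorname{Hom}(E_t^p,E_t^p)$ is one-dimensional, where $E_t^p$ is the restriction of $E_t$ to $X_t\times\{p\}$. These Ext groups are cohomology groups of honest twisted vector bundles, namely $\E nd(E_t^p)$ and the four summands of $\mathcal{E}nd(E_t^p\oplus E_t^q)$ (local freeness kills the local Ext sheaves). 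Poly-stability and hyperholomorphicity of exactly these bundles are what Markman's results provide (\cite[Theorem 1.4]{Mar2}, \cite[Lemma 8.3]{Mar1}, \cite[Lemma 9.1]{Mar3}, after deforming to the square of a non-algebraic K3 where poly-stability holds for \emph{every} K\"ahler class), so \cite[Corollary 8.1]{V1} applies legitimately, bundle by bundle, to give invariance of the relevant cohomology dimensions along twistor lines; the required vanishings at $t=0$ hold because $E_0$ is a BKR equivalence kernel, and they then propagate to all $t\in\M_{\psi}^0$. If you want to salvage your outline, you must replace the cone-level invariance claim by this bundle-level argument; the identification of the twists via the Morita class of $\E nd(E_t)$ at the end of your proposal is correct and matches the paper's use of \cite[Theorem 1.3.5]{Ca1}.
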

 \begin{proof}
 For $p\in X_0'$, denote by $E_0^p$ the fiber of $E_0$ above $X_0$. It follows from \cite[Theorem 1.4]{Mar2} that each $E_0^p$ is slope-stable for each K\"ahler class of an open subcone of the K\"ahler cone of $X_0$. 
Let $E_t^{p}$ be the twisted sheaf on $X_t$ that is the fiber of $E_t$ above $p$ for $p\in X_t'$. 
Note first that for, $p,q \in X_t'$, the sheaves $$\E nd(E^{p}_t)\ \text{and}\ \E nd(E^{q}_t)$$ are Morita equivalent so define a 
Brauer class $\beta \in Br(X_t)$ (the summand along $X_t$ of the Brauer class given by the Azumaya algebra $\E nd(E_t)$).

From \cite[3.2.1]{Ca1}, in order to show derived equivalence it is enough to show that $$Ext^k(E^p_{t},E^q_{t}) =0\ \text{for}\ k>0 \ \text{or}\ p\neq q\ \text{and}\ Hom(E^p_{t},E^p_{t})=1.$$
Arguing as in the proof of \cite[Theorem 1.4]{Mar2} (cf.~\cite[Theorem 1.6]{Mar2}) using \cite[Lemma 8.3]{Mar1}, we can find a special deformation $X_{t_0}$ of $X_0$  along a twistor path, such that $X_{t_0}$ is isomorphic to the square of a non-algebraic K3 surface and such that $E_{t_0}^p$ is poly-stable with respect to every K\"ahler class on $X_{t_0}$. 
 It follows from \cite[Corollary 8.1]{V1} that the dimensions of cohomologies of poly-stable hyperholomorphic vector bundles are invariant along twistor lines.
We infer {using the fact that $E_t^{p}$ is locally free so $\mathcal{E}xt^1(E_t^{p},E_t^{p})=0$ } that $$H^k(\E nd(E_t^{p}))= Ext^k(E^{p}_t,E^{p}_t)=H^k(\E nd(E_{0}^p))$$ for every $t\in X_0'$.

To consider different $p,q\in X'_t$, we study the cohomologies of the bundle $\mathcal{H}om(E^p_t,E^q_t)$.
As before $E^p_t$ and $E^q_t$ are projectively hyperholomorphic, meaning 
$\E nd(E^p_t)$ and $\E nd(E^q_t)$ are hyperholomorphic.
Thus they give rise to twisted vector bundles $E^p$ and $E^q$ on the twistor family.
Then $E^p\oplus E^q$ is also a twisted vector bundle on the twistor family therefore $\mathcal{E}nd(E^p\oplus E^q)$ is a vector bundle on the twistor family (so a hyperholomorphic bundle). 

In order to see that $\mathcal{E}nd(E^p_0\oplus E^q_0)$ is poly-stable with respect to K\"ahler classes in an open subcone of the K\"ahler cone of $X_0$ it is enough to observe that $E_0^p$ and $E_0^q$ are poly-stable with the same slopes. Thus, by \cite[Lemma 9.1]{Mar3} 
we have that $$\mathcal{H}om(E^p_0,E^q_0),\ \mathcal{H}om(E^q_0,E^p_0),\  \E nd (E_0^p )\ \text{and}\ \E nd (E_0^q )$$ split as direct sums of indecomposable slope stable bundles of the same slope.
It follows from  \cite[Corollary 8.1]{V1} that 
 the cohomologies $H^k(\mathcal{E}nd(E^p_t\oplus E^q_t))$ are invariant for all deformations of $X_0$ (see again the proof of \cite[Theorem 1.4]{Mar2}). We find in this way the cohomologies of the factors of
 $$\mathcal{E}nd(E^p_t\oplus E_t^q)=\mathcal{E}nd(E^p_t)\oplus \mathcal{E}nd( E_t^q)\oplus \mathcal{H}om (E^p_t, E^q_t)\oplus \mathcal{H}om (E^q_t,E^p_t)$$ 
 and conclude that $H^k(\mathcal{H}om (E^q_t,E^p_t))=0$ (since we know that $\mathcal{E}nd(E^p_t)$ and $\mathcal{E}nd(E^q_t)$ are hyperholomorphic and their cohomologies are invariant too).
 Finally, it is well known that the Brauer class of the twisted bundle $E_t$ is the Morita equivalence class of $\E nd(E_t)$ (see \cite[Theorem 1.3.5]{Ca1}).
\end{proof}
\begin{rem} We can naturally generalise the previous theorem for more general manifolds if we know that $E_0^p$ is poly-stable with respect to all K\"ahler classes.  
\end{rem}

\subsection{The Brauer class after deformation}  In this section we describe the Brauer twists $\beta\in H^2(\oo_{X_t}^{\ast})_{\operatorname{tors}}$ and $\beta'\in H^2(\oo_{X_t}^{\ast})_{\operatorname{tors}}$ of the deformed Fourier-Mukai kernel described in Theorem \ref{main markman} and study their $B$-lifts. 

Following \cite[\S 4]{Ca}, let $f:X\to S$ be a proper
smooth morphism of analytic spaces, and with $0$ a closed point of
$S$.  
Let $X_0$ be the fiber of $f$ over $0$.  We consider an element
$\alpha\in Br(X)$, such that $\alpha|_{X_0}$ is trivial as an element
of $Br(X_0)$, and we assume we are given a locally free
$\alpha$-twisted sheaf $\mathcal E$ on $X$.  Since $\alpha|_{X_0} = 0$, we
can modify the transition functions of $\E|_{X_0}$ in
such a way that we get an untwisted locally free sheaf $\E_0$ on
$X_0$.  

\begin{thm}{\cite[Theorem  4.1]{Ca}}\label{Cardelaru}
\label{deform}
Let $\E$ be a locally free $\alpha$-twisted sheaf on $X$, and let
$\E_0 = \E|_{X_0}$.  Assume that $S$ is small enough, so that we have an identification $H^i(X, \ZZ) \simeq
H^i(X_t, \ZZ)$ for all $i$ and all $t\in S$.  We assume that
$\alpha|_{X_0} = 0$, and therefore we can modify the transition
functions of $\E_0$ so that it is a usual sheaf on $X_0$.  Then we
have
\[ \alpha = \left [ -\frac{1}{\rk(\E_0)}c_1(\E_0) \right ]. \]

It follows that $\alpha|_{X_t}$ is the Brauer class of the general fiber. We find the corresponding $B$-lifts through the isomorphism $H^i(X, \ZZ) \simeq
H^i(X_t, \ZZ)$.
\end{thm}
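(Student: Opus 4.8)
The statement is local-to-global in nature, so the plan is to reduce it to an explicit Čech computation on a good cover $\{U_i\}$ of $X$ whose restrictions $\{U_i\cap X_0\}$ form a good cover of $X_0$ (available after shrinking $S$). I would represent $\alpha$ by a $2$-cocycle $a=\{a_{ijk}\}\in Z^2(\oo_X^{\ast})$ and the rank-$r$ locally free $\alpha$-twisted sheaf $\E$ (with $r=\rk(\E_0)$) by $GL_r$-valued $1$-cochains $\{g_{ij}\}$ obeying the twisted relation $g_{ij}g_{jk}g_{ki}=a_{ijk}\cdot\mathrm{id}$. The whole point is to read off the $B$-lift of $\alpha$ from Chern-class data, by comparing two trivializations of the relevant $2$-cocycle: one over $X$ coming from $H^3(X,\ZZ)=0$, the other over $X_0$ coming from $\alpha|_{X_0}=0$.

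First I would produce an explicit $B$-lift over $X$. Since $H^3(X,\ZZ)=0$, choosing local logarithms $\hat a=\tfrac{1}{2\pi i}\log a$ the integral $3$-cocycle $\delta\hat a$ is a coboundary $\delta m$, and then $\beta=[\hat a-m]\in H^2(X,\oo_X)$ satisfies $\exp(2\pi i\beta)=\alpha$. To locate $\beta$ inside $\tfrac1r H^2(X,\ZZ)$ I would use the twisted determinant $\det\E$: putting $\ell_{ij}=\det g_{ij}$ one gets $\delta\ell=a^{r}$, so after taking logarithms $\delta\hat\ell=r\hat a+n$ with $n$ an integral $2$-cochain. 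Since $\delta\hat\ell$ is a coboundary this gives $r\beta=-[n+rm]$ in $H^2(X,\oo_X)$ with $n+rm$ integral, so the $B$-lift of $\alpha$ is $-\tfrac1r[n+rm]\in\tfrac1r H^2(X,\ZZ)$.

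Next I would untwist over $X_0$. From $\alpha|_{X_0}=0$ write $a|_{X_0}=\delta u$, set $\tilde g_{ij}=u_{ij}^{-1}g_{ij}|_{X_0}$ (an honest cocycle defining $\E_0$), so that $\det\tilde g=u^{-r}\ell|_{X_0}$ and $c_1(\E_0)$ is the exponential connecting image of this determinant cocycle. A short computation with the logarithms $\hat u$ of $u$ gives $c_1(\E_0)=(n+rm)|_{X_0}-r(p+m|_{X_0})$, where $p=\delta\hat u-\hat a|_{X_0}$ and $p+m|_{X_0}$ is an integral $2$-cocycle. Under the identification $H^2(X,\ZZ)\cong H^2(X_0,\ZZ)$ (restriction), the class $[n+rm]$ corresponds to $(n+rm)|_{X_0}$, so $-\tfrac1r c_1(\E_0)$ differs from the $B$-lift $-\tfrac1r[n+rm]$ exactly by the integral class $p+m|_{X_0}$. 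Because a $B$-lift is only well defined modulo $H^2(X,\ZZ)$ (and $H^{1,1}(X,\ZZ/k)$), as recorded in Section \ref{brauerz}, this yields $\alpha=[-\tfrac1r c_1(\E_0)]$.

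For the last two assertions I would invoke Ehresmann's theorem: as $f$ is proper and smooth and $S$ is small, $X\to S$ is topologically a product, which gives the identifications $H^i(X,\ZZ)\cong H^i(X_t,\ZZ)$ through which the $B$-lift is transported, and $\E|_{X_t}$ is a locally free $\alpha|_{X_t}$-twisted sheaf, so $\alpha|_{X_t}$ is the Brauer class carried by $\PP(\E|_{X_t})$ on the general fiber. I expect the main obstacle to be not any single step but the integral bookkeeping underlying the comparison above: the twisted determinant only sees $\alpha^{r}=0$, so the nontrivial ($r$-torsion) information in $\alpha$ is recovered solely through the interplay of the two trivializations $m$ and $u$, and one must check carefully that their discrepancy $p+m|_{X_0}$ is genuinely integral, so that it is absorbed into the inherent ambiguity of the $B$-lift rather than altering the Brauer class.
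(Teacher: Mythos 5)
This theorem is imported verbatim from C\u{a}ld\u{a}raru \cite[Theorem 4.1]{Ca} --- the paper contains no proof of its own --- and your \v{C}ech argument (representing $\E$ by twisted transition functions $g_{ij}$ with $\delta g = a$, untwisting over $X_0$ via $a|_{X_0}=\delta u$, extracting $r\beta=-[n+rm]$ from the twisted determinant $\ell=\det g$, and checking that the discrepancy $p+m|_{X_0}$ is an integral cocycle absorbed by the $B$-lift ambiguity of Section \ref{brauerz}) is correct and essentially reconstructs the cited source's original proof, including the Ehresmann argument for transporting the class to the fibers $X_t$. The one point worth flagging is that your construction of the global trivialization $m$ uses $H^3(X,\ZZ)=0$, a hypothesis absent from the statement as quoted but part of the paper's standing setting (Section \ref{brauerz}) and satisfied in all of its applications, where the fibers are products of $K3^{[2]}$-type fourfolds with vanishing odd cohomology.
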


Let us explain how we shall use this result in our situation. Recall that any pair of points in $\M_{\psi}$ are connected by a sequence of generic twistor lines.
In order to understand the Brauer class in an arbitrary deformation of $E_0$ it is enough to apply the above Theorem \ref{Cardelaru} along a generic twistor line. Indeed, it follows from \cite[6.7 (3)]{MarB} that the Azumaya algebra spreads in a canonical way along one twistor line. Above each point of this twistor line, this defines uniquely the Brauer class as a Morita equivalence of Azumaya algebras.
Now the $B$-lift of this Brauer class at a generic point of the twistor line is computed explicitly using Theorem \ref{Cardelaru} and is unique up to the ambiguity described in Section \ref{brauerz} so unique in $$(\frac{1}{\rk(E_0)} H^2(X_t,\ZZ ))/H^2(X_t,\ZZ).$$
 We conclude that the class in $(\frac{1}{\rk(E_0)} H^2(X_t,\ZZ))/H^2(X_t,\ZZ)$ of this $B$-lift is also unique at any point $t\in \M_{\psi}$ and is not dependent on the choice of the twistor path.

\section{Twisted derived equivalences of r-cyclic type}\label{section r cyclic} In this section, let us compare the (twisted) derived categories of  pairs of two \HK{} fourfolds of $K3^{[2]}$-type $X$ and $Y$ that have Hodge isometric transcendental lattices and the Hodge isometry relating them is represented by a single reflection $\rho_r$ with respect to a class of degree $-2r$. We will introduce and investigate special twisted Fourier-Mukai kernels relating their (twisted) derived categories that will be called of $r$-cyclic type and will be used as building blocks in our constructions of (twisted) derived equivalences. 

More precisely,  let us recall from \cite{Mar2} the following result.  Let $X$ and $Y$ be two hyper-K\"ahler manifolds  of $K3^{[n]}$-type.
 For $$f\colon H^2(X,\QQ)\to H^2(Y,\QQ)$$ a rational Hodge isometry, we say is of $r$-cyclic type, for some integer $r$, if there exists a primitive class 
 $u\in H^2(X,\ZZ)$ that satisfies $(u,u)=2r$ and $(u,  .)\in H^2(X,\ZZ)^{\vee}$
 primitive as well and there exists 
 $$g\colon H^2(X,\ZZ)\to H^2(Y,\ZZ),$$
 a parallel transport operator, such that $f=-g\rho_u$, where $\rho_u\in O(H^2(X,\QQ))$ is the reflection 
 $\rho_u(x)=x-\frac{2(u,x)}{(u,u)}u$.

\begin{thm}[Corollary 8.5 \cite{Mar1}] \label{Markman def} If $f\colon H^2(X,\QQ)\to H^2(Y,\QQ)$ is a rational Hodge isometry of $r$-cyclic type for two \HK\ manifolds $X$, $Y$ of $K3^{[n]}$-type for which there exists a K\"ahler class $\kappa \in H^2(X,\QQ)$ such that $f(\kappa)$ is a K\"ahler class, then there exists a locally free possibly twisted sheaf on $X\times Y$ of rank $n!r^{n}$ that is a deformation along a twistor path of a BKR Fourier-Mukai kernel between two manifolds $S^{[n]}$ and $M^{[n]}$ for two derived equivalent K3 surfaces $S$ and $M$. 
\end{thm}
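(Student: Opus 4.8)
The plan is to reduce the statement to Markman's result \cite[Corollary 8.5]{Mar1} by constructing, from the given $r$-cyclic rational Hodge isometry $f$, the precise data that theorem requires: a pair of derived equivalent K3 surfaces $S$, $M$ together with a BKR Fourier-Mukai kernel on $S^{[n]}\times M^{[n]}$ whose induced rational Hodge isometry lies in the same deformation class as $f$, and then to deform that kernel along a twistor path to $X\times Y$. First I would unpack the $r$-cyclic hypothesis: we are given a primitive $u\in H^2(X,\ZZ)$ with $(u,u)=2r$, primitivity of $(u,\,\cdot\,)$ in $H^2(X,\ZZ)^\vee$, a parallel transport operator $g\colon H^2(X,\ZZ)\to H^2(Y,\ZZ)$, and the factorisation $f=-g\rho_u$. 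The reflection $\rho_u$ records exactly the lattice-theoretic recipe by which a Fourier-Mukai transform associated to a $(-2r)$- or $(2r)$-class acts on the Beauville-Bogomolov lattice, so the strategy is to recognise $\rho_u$ as the action on cohomology of a BKR-type kernel, and to absorb the parallel transport $g$ into the deformation/marking identification.

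The key steps, in order, would be as follows. First, using the assumption that there is a K\"ahler class $\kappa\in H^2(X,\QQ)$ with $f(\kappa)$ K\"ahler, together with the Torelli theorem and the surjectivity of the period map for $K3^{[n]}$-type manifolds, I would deform the marked pair $(X,Y)$ inside the moduli space $\M_\psi$ (for $\psi$ the isometry represented by $f$) to a distinguished pair of Hilbert schemes $S^{[n]}$, $M^{[n]}$ on honest K3 surfaces; this is precisely the sort of twistor-connectivity input recorded before Theorem \ref{main markman} via \cite[Lemma 5.15]{Mar1}. Second, on this special fiber I would produce $S$ and $M$ as derived equivalent K3 surfaces whose Mukai lattices are related by the transform dictated by the $(u,u)=2r$ datum: the primitivity conditions on $u$ and on $(u,\,\cdot\,)$ guarantee that the relevant Mukai vector is primitive, so that $M$ arises as a fine moduli space of stable sheaves on $S$ and hence $D^b(S)\simeq D^b(M)$ in the sense of Mukai-Orlov. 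Third, I would invoke the BKR correspondence to turn this equivalence of K3 surfaces into a genuine Fourier-Mukai kernel, a locally free sheaf, on $S^{[n]}\times M^{[n]}$ whose induced action on second cohomology matches $-\rho_u$ up to the parallel transport recorded by $g$. Fourth, having matched the kernel's cohomological action to $\psi$, I would deform this BKR kernel back along the twistor path to $X\times Y$ using the hyperholomorphic-sheaf machinery already cited in the excerpt, obtaining the desired locally free, possibly twisted, sheaf; the rank $n!r^n$ comes out of the BKR construction combined with the rank of the Fourier-Mukai kernel on the K3 surfaces, which is governed by $r$.

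I expect the main obstacle to be the matching step, namely verifying that the rational Hodge isometry induced by the constructed BKR kernel on $S^{[n]}\times M^{[n]}$ is genuinely of $r$-cyclic type with the same class $u$ and lands in the same connected component $\M_\psi^0$ as the pair $(X,Y)$; this is a lattice-theoretic computation where the precise role of the sign $-1$ in $f=-g\rho_u$ and the divisibility/primitivity of $u$ must be controlled so that the twistor path joining the special fiber to $(X,Y)$ stays inside $\M_\psi^0$. A second, more technical point is ensuring the K\"ahler-class hypothesis on $f$ transfers to the intermediate fibers so that the deformation remains within the regime where \cite[Proposition 5.19]{Mar1} and the poly-stability/hyperholomorphicity results apply; this is what makes the deformed kernel locally free rather than merely a complex. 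Once these compatibilities are in place, the conclusion is exactly the content of \cite[Corollary 8.5]{Mar1}, and the rank computation follows formally.
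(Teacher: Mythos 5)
This statement is imported verbatim from Markman — the paper offers no proof of its own beyond the citation \cite[Corollary 8.5]{Mar1} — and your proposal correctly identifies this and faithfully reconstructs Markman's internal strategy: realise $\rho_u$ via a moduli space $M=M_S(r,L,s)$ of sheaves on a K3 surface $S$ (fineness from the coprimality/primitivity data), take the BKR kernel of rank $n!\,r^n$ on $S^{[n]}\times M^{[n]}$, and deform it along generic twistor paths within the connected component $\M^0_\psi$ using the hyperholomorphicity of its endomorphism algebra, with the K\"ahler-class hypothesis on $f$ being exactly the membership condition for $\M_\psi$. Your flagged obstacles (matching the component, the sign in $f=-g\rho_u$, persistence of poly-stability along the path) are precisely the points Markman's \cite[Lemmas 5.14--5.15, \S\S 6--8]{Mar1} handle, so your outline is essentially the same approach as the source the paper relies on.
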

Using Theorem \ref{main markman} a deformation along a twistor path of a Fourier-Mukai kernel of BKR type is a twisted Fourier-Mukai kernel. We will call such Fourier-Mukai kernels as well as related twisted derived equivalences of $r$-cyclic type.  We hence infer the following.
\begin{cor}\label{cor-B} For $X$ and $Y$ hyper-K\"ahler manifolds of $K3^{[n]}$-type  of Picard number one related by a rational Hodge isometry of r-cyclic type there exists a twisted derived equivalence of r-cyclic type: $D(X,\beta_X)\simeq D(Y,\beta_Y)$ for some Brauer classes $\beta_X \in H^2(\oo^{\ast}_X)$ and $\beta_Y \in H^2(\oo^{\ast}_Y)$.
\end{cor}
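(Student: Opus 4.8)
The plan is to verify the single nontrivial hypothesis of Theorem \ref{Markman def} and then chain it with Theorem \ref{main markman}. Write $L_X$, $L_Y$ for the ample generators of the Picard lattices, so $NS(X)_{\QQ}=\QQ L_X$ and $NS(Y)_{\QQ}=\QQ L_Y$. Being a rational Hodge isometry, $f\colon H^2(X,\QQ)\to H^2(Y,\QQ)$ carries the algebraic part $NS(X)_{\QQ}$ isomorphically onto $NS(Y)_{\QQ}$, and since both lattices have rank one this forces $f(L_X)=cL_Y$ for some $c\in\QQ^{\ast}$. The first and only delicate point is to check that $c>0$, so that a K\"ahler class is sent to a K\"ahler class and not to its negative.

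To see this, I would use the presentation $f=-g\rho_u$ from the definition of $r$-cyclic type to control the orientation of the positive three-space. The parallel transport operator $g$ preserves this orientation. Both $-\operatorname{id}$ and the reflection $\rho_u$ reverse it: for $\rho_u$ this is because $(u,u)=2r>0$, so $\rho_u$ negates one positive direction while fixing its orthogonal complement, acting with determinant $-1$ on the positive three-space; for $-\operatorname{id}$ it is because $(-1)^3=-1$. Hence $f=(-\operatorname{id})\circ g\circ\rho_u$ preserves the orientation of the positive three-space, so it is an orientation-preserving Hodge isometry and maps the positive cone $\mathcal{C}_X$ to $\mathcal{C}_Y$. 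As $L_X\in\mathcal{C}_X$ and $L_Y\in\mathcal{C}_Y$, we conclude $c>0$.

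Because $X$ has Picard number one, its rational K\"ahler cone is the ample ray $\QQ_{>0}L_X$, so $\kappa:=L_X$ is a K\"ahler class and, by the previous paragraph, $f(\kappa)=cL_Y$ with $c>0$ is K\"ahler on $Y$. This is exactly the hypothesis of Theorem \ref{Markman def}, which therefore provides a locally free, possibly twisted, sheaf $E$ on $X\times Y$ of rank $n!r^{n}$ that is a deformation along a twistor path of a BKR Fourier-Mukai kernel $E_0$ between $S^{[n]}$ and $M^{[n]}$ for two derived equivalent K3 surfaces $S$ and $M$.

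Finally, setting $\psi=f$, the construction above realises the pair $(X,Y)$ as a point $t$ of the connected component $\M_{\psi}^0$ containing the base pair $(S^{[n]},M^{[n]})$, since the two are joined by the twistor path along which $E$ deforms $E_0$. Applying Theorem \ref{main markman} to the BKR kernel $E_0$ on $S^{[n]}\times M^{[n]}$, the deformed twisted bundle $E_t=E$ induces the desired twisted derived equivalence $D(X,\beta_X)\simeq D(Y,\beta_Y)$, the Brauer classes $\beta_X\in H^2(\oo_X^{\ast})$ and $\beta_Y\in H^2(\oo_Y^{\ast})$ being those cut out by the Azumaya algebra $\E nd(E)$ along the two factors; this equivalence is of $r$-cyclic type by construction. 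The main obstacle is precisely the sign check $c>0$ of the second paragraph: both $L_X$ and $-L_X$ generate $NS(X)_{\QQ}$, so one must genuinely use the orientation built into the $r$-cyclic presentation to prevent $f$ from sending a K\"ahler class to an anti-K\"ahler one, and it is the absence of wall-and-chamber subtleties in Picard number one that makes the full positive ray K\"ahler and closes the argument.
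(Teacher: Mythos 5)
Your proof follows the paper's own route exactly: the paper deduces Corollary \ref{cor-B} by chaining Theorem \ref{Markman def} (which supplies the locally free twisted sheaf on $X\times Y$ as a twistor-path deformation of a BKR kernel) with Theorem \ref{main markman} (which upgrades any such deformation to a twisted derived equivalence), precisely as you do. Your second paragraph --- verifying that $f(L_X)=cL_Y$ with $c>0$ via the orientation of the positive cone, since the parallel transport operator $g$ preserves it while both $-\operatorname{id}$ and $\rho_u$ reverse it, so that the K\"ahler hypothesis of Theorem \ref{Markman def} holds automatically in Picard rank one --- is a step the paper leaves implicit, and your argument for it is correct.
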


In our investigations we will deal with specific Fourier-Mukai kernels of $r$-cyclic type for which we will be able to compute explicitly the Brauer classes on both sides of the obtained equivalence. For that we need a more detailed description of BKR equivalences and their deformations along twistor paths.

\subsection{Deformation of BKR equivalences along twistor paths}\label{deforming BKR}
Let us fix two coprime numbers $r,s$ and a primitive hyperbolic sublattice $U_{e,f}$ of the K3 lattice $\Lambda_{K3}$ with generators $e,f$. Consider a very general $K3$ surface $S$ of degree $2rs$ polarised by a class $L$ with marking 
 $$\phi_S\colon H^2(S,\mathbb Z)\to \Lambda_{K3}$$
 such that $\phi_S(L)=e+rsf$.
Let  $M=M_S(r,L,s)$ be the moduli space of stable sheaves on $S$ with Mukai vector $(r,L,s)$. In this case $M$ is also a $K3$ surface of Picard number 1. Moreover, $S$ and $M$ are derived equivalent and $M$ admits a marking 
$$\phi_M\colon H^2(M,\mathbb Z)\to \Lambda_{K3}$$
 such that the reflection $\tilde \rho_r:\Lambda_{K3}\to \Lambda_{K3}$ with respect to a class $e-rf$ of degree $-2r$ via the markings restricts to a Hodge isometry between transcendental lattices $T_S$ and $T_M$.
    
   By \cite{P}, the \HK{} fourfolds $S^{[2]}$ and $M^{[2]}$ are also derived equivalent. Moreover, the derived equivalence is given by a so-called BKR functor whose Fourier-Mukai kernel $\mathcal K_{S^{[2]},M^{[2]}}$ is a vector bundle on $S^{[2]}\times M^{[2]}$ of rank $2r^2$ inducing a rational Hodge isometry of $r$-cyclic type, which composed with appropriate markings gives
   $$\rho_r\colon \Lambda\simeq  H^2(S^{[2]},\mathbb Q)\to H^2(M^{[2]},\mathbb Q)\simeq \Lambda,$$
   the reflection with respect to the class  $e-rf\in \Lambda := \Lambda_{K3}\oplus \langle -2\rangle$. 

Now, following \cite[Corollary 8.5]{Mar2} for every pair of marked \HK{} fourfolds $X$ and $Y$ for which $\rho_r$ induces a rational Hodge isometry we can find a deformation along a twistor path of $\mathcal K_{S^{[2]},M^{[2]}}$ to a twisted bundle $\mathcal K_{X,Y}$ on $X\times Y$. The latter, by Theorem \ref{main markman} induces a twisted derived equivalence with Brauer class given by the Azumaya algebra $\E nd \mathcal( \mathcal K_{X,Y})$ which is a deformation of the Azumaya algebra $\E nd( \mathcal K_{S^{[2]},M^{[2]}})$. We will use Theorem \ref{Cardelaru} to determine these twists in explicit cases that we shall consider below. 

We will have two types of twisted derived equivalences depending on the divisibility of the polarizations. We shall describe those equivalences in the next two subsections.

\subsection{ Case of divisibility 1}

Let us first consider the case of divisibility 1. For that recall that we fixed an embedding $\Lambda_{K3}\subset \Lambda$, two generators $e,f\in \Lambda $ of a primitive hyperbolic sublattice $U_{e,f}\subset \Lambda_{K3}\subset \Lambda$  and $\delta$ a class of square -2 and divisibility 2 orthogonal to $\Lambda_{K3}$. 

Moreover, for $r\in \mathbb Z$, we have 
$\rho_r: \Lambda\otimes \mathbb Q \to \Lambda\otimes \mathbb Q $ the reflection with respect to $e-rf$. In particular, 
$$\rho_r(e)=rf, \ \rho_r(f)=\frac{1}{r}e \text{ and } \rho_r(x)=x \text{ for } x\in U_{e,f}^\perp.$$ For $r,s,l\in \mathbb Z$ define     
$$L_{r,s,l}= e+rsf-l\delta,\ T_{r,s,l}=\langle L_{r,s,l}\rangle ^{\perp}\subset \Lambda,$$  
$$L'_{r,s,l}=se+rf-l\delta,\ T'_{r,s,l}=\langle L'_{r,s,l}\rangle ^{\perp}\subset \Lambda.$$ 
Note that $(L_{r,s,l})^2=(L'_{r,s,l})^2=2rs-2l^2$.

  Let $X$, $Y$ be two \HK{} fourfolds of $K3^{[2]}$-type with markings:
$$\phi_X\colon H^2(X,\mathbb Z)\to \Lambda$$
$$\phi_Y\colon H^2(Y,\mathbb Z)\to \Lambda$$

such that $$\phi_X(\operatorname{NS}(X))= \langle L_{r,s,l}\rangle ,\ \phi_Y(\operatorname{NS}(Y))=\langle L'_{r,s,l}\rangle$$ for some $r,s\in \mathbb Z$ satisfying $\gcd(r,s)=1$, and $l$ such that $r|2l$ and such that $\rho_r$ composed with the markings defines a rational Hodge isometry between $X$ and $Y$. Following Subsection \ref{deforming BKR} we can then construct a bundle $\mathcal K_{r,s,l}:=\mathcal K_{X,Y}$ on $X\times Y$ that defines a twisted derived equivalence $$D^b(X,\beta)=D^b(Y,\beta')$$ for some Brauer classes $\beta$, $\beta'$. Based on the fact that $\mathcal K_{r,s,l}$ is a deformation of  $\mathcal K_{S^{[2]},M^{[2]}}$, the following lemma provides us the descriptions of $\beta$ and $\beta'$ depending on $r$, $s$, $l$.

\begin{lem}\label{proof-div1}
    Let $X$, $Y$ be two \HK{} fourfolds of $K3^{[2]}$-type marked in such a way that $\operatorname{NS}(X)= \langle L_{r,s,l}\rangle $, $\operatorname{NS}(Y)=\langle L'_{r,s,l}\rangle$ for some $r,s\in \mathbb Z$ satisfying $\gcd(r,s)=1$, and $l$ such that $r|2l$. Assume that $\rho_r$ via chosen markings induces a Hodge isometry between $T_X$ and $T_Y$ and let $\mathcal K_{X,Y}$ arise in the construction above. Then we have the following two possibilities:
    \begin{enumerate}
        \item  $\frac{2l}{r}$ is odd and $\mathcal K_{X,Y}$ is the Fourier-Mukai kernel of a derived equivalence $D^b(X)\simeq D^b(Y)$,
        \item $\frac{2l}{r}$ is even and $\mathcal K_{X,Y}$ is the Fourier-Mukai kernel of a twisted derived equivalence $D^b(X,[\frac{\delta_X}{2}])\simeq D^b(Y,[\frac{\delta_Y}{2}])$.
    \end{enumerate}
    
\end{lem}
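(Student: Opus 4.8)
The plan is to determine $\beta$ and $\beta'$ by combining the deformation formula of Căldăraru (Theorem \ref{Cardelaru}) with a single Chern-class computation for the untwisted BKR kernel, so that the arithmetic of $\frac{2l}{r}$ emerges at the very end from a lattice reduction. By Theorem \ref{Markman def} with $n=2$, the kernel $\mathcal K_{X,Y}$ is a twistor deformation of the honest vector bundle $\mathcal K_{S^{[2]},M^{[2]}}$ of rank $2r^2$, and the two corresponding points of $\M_{\rho_r}$ are joined by generic twistor lines along which the Azumaya algebra spreads canonically (Section \ref{EPW2}). Since the twist is trivial at the starting point $S^{[2]}$, Theorem \ref{Cardelaru} applies and gives that the $B$-lift of $\beta$ is
$$-\frac{1}{2r^2}\,c_1\bigl(\mathcal K^{p}\bigr),\qquad \mathcal K^{p}:=\mathcal K_{S^{[2]},M^{[2]}}\big|_{S^{[2]}\times\{p\}},$$
for a generic $p\in M^{[2]}$, transported to $H^2(X,\ZZ)\simeq\Lambda$ through the markings; here $\mathcal K^{p}$ is precisely the sheaf $\E_0$ of Theorem \ref{Cardelaru}.

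The heart of the argument is the computation of $c_1(\mathcal K^{p})$. Under the BKR identification $D^b(M^{[2]})\simeq D^b_{\mathfrak S_2}(M\times M)$, for a generic $p=\{m_1,m_2\}$ the fibre $\mathcal K^{p}$ is the descent to $S^{[2]}$ of the $\mathfrak S_2$-equivariant bundle $(F_{m_1}\boxtimes F_{m_2})\oplus(F_{m_2}\boxtimes F_{m_1})$ on $S\times S$, where $F_{m_i}$ is the stable sheaf on $S$ with $v(F_{m_i})=(r,L,s)$, so $\mathrm{rk}\,F_{m_i}=r$ and $c_1(F_{m_i})=L$. Writing $c_1(\mathcal K^{p})=a\,\mu(L)+b\,\delta$ in $\Lambda_{K3}\oplus\ZZ\delta$, I would obtain $a$ by restricting to a curve $\{s\}+C$ disjoint from the exceptional locus, where $\mathcal K^{p}$ becomes $(F_{m_1})_s\otimes F_{m_2}|_C\oplus(F_{m_2})_s\otimes F_{m_1}|_C$ and hence has degree $2r\,(L\cdot C)$, giving $a=2r$. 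For $b$ I would restrict to a fibre $\ell=\PP(T_sS)$ of the Hilbert--Chow resolution: over the diagonal the fibre carries $r^2$ copies of the regular representation of $\mathfrak S_2$, which descends to $r^2\bigl(\oo_\ell\oplus\oo_\ell(1)\bigr)$; thus $c_1(\mathcal K^{p})\cdot\ell=r^2$, and since $\delta\cdot\ell=-1$ this yields $b=-r^2$. Therefore
$$c_1(\mathcal K^{p})=2r\,\mu(L)-r^2\delta=2r(e+rsf)-r^2\delta.$$

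With this in hand the reduction is immediate. Using $e+rsf=L_{r,s,l}+l\delta$ I get
$$-\frac{1}{2r^2}\,c_1(\mathcal K^{p})=-\frac{1}{r}L_{r,s,l}+\Bigl(\tfrac12-\tfrac{l}{r}\Bigr)\delta .$$
A rational multiple $q\delta$ lies in $\Lambda+\langle L_{r,s,l}\rangle_{\QQ}$ if and only if $q\in\ZZ$ (comparing $\Lambda_{K3}$-components forces the coefficient of $L_{r,s,l}$ to be integral), so modulo the ambiguity of Section \ref{brauerz} the $B$-lift of $\beta$ is determined by the class of $\tfrac12-\tfrac{l}{r}$ in $\QQ/\ZZ$. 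As $r\mid 2l$ we have $\tfrac{l}{r}=\tfrac12\cdot\tfrac{2l}{r}\in\tfrac12\ZZ$, whence
$$\tfrac12-\tfrac{l}{r}=\tfrac12\Bigl(1-\tfrac{2l}{r}\Bigr)\equiv\begin{cases}0 & \tfrac{2l}{r}\ \text{odd},\\ \tfrac12 & \tfrac{2l}{r}\ \text{even},\end{cases}\pmod{\ZZ}.$$
In case (1) the $B$-lift is integral modulo $\operatorname{NS}(X)_{\QQ}$, so $\beta=0$ and $\mathcal K_{X,Y}$ is the kernel of an ordinary equivalence $D^b(X)\simeq D^b(Y)$; in case (2) the $B$-lift equals $\tfrac{\delta_X}{2}$, so $\beta=[\tfrac{\delta_X}{2}]$. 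The twist $\beta'$ on $Y$ is computed identically after exchanging the roles of $S$ and $M$: the kernel has rank $2r^2$ from either side, which forces the reciprocal constituent sheaves $M=M_S(r,L,s)$, $S=M_M(r,L_M,s)$ to again have rank $r$, and one finds $c_1=2r\,\mu(L_M)-r^2\delta_Y$, giving $\beta'=[\tfrac{\delta_Y}{2}]$ in case (2) and $\beta'=0$ in case (1).

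The step I expect to be the main obstacle is the determination of the $\delta$-coefficient $b=-r^2$, i.e.\ controlling the descent of the equivariant bundle $\mathcal K^{p}$ along the exceptional divisor of the Hilbert--Chow morphism: one must identify the $\mathfrak S_2$-representation on the fibre over the diagonal together with its first-order normal behaviour, so that the regular representation descends to $\oo_\ell\oplus\oo_\ell(1)$ on each exceptional line. Everything else is routine once $c_1(\mathcal K^{p})$ is known; in particular the resulting Brauer class is independent of the chosen twistor path by the final paragraph of Section \ref{EPW2}.
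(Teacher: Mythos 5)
Your proposal is correct and follows the same overall strategy as the paper: both proofs identify the twist of $\mathcal K_{X,Y}$ via Theorem \ref{Cardelaru} applied along the twistor deformation, reduce everything to knowing $\frac{1}{2r^2}c_1(\mathcal K_{S^{[2]},M^{[2]}})$, and then read off the Brauer class from the $\delta$-coefficient modulo $\mathbb Z$, with the dichotomy $\frac{2l}{r}$ odd/even coming out exactly as in the paper's equation (\ref{chern Lrsl}). The one genuine difference is how the Chern class is obtained: the paper simply cites \cite[Lemma 11.1]{Mar2} and \cite[Eq.\ (7.1)--(7.2)]{Mar1}, whereas you re-derive the $S^{[2]}$-component by a direct equivariant computation on the BKR kernel fibre --- degree $2r(L\cdot C)$ on a curve $\{s\}+C$ away from the diagonal, and descent of $r^2$ copies of the regular $\mathfrak S_2$-representation to $r^2(\oo_\ell\oplus\oo_\ell(1))$ along an exceptional line, giving $c_1(\mathcal K^p)=2r\mu(L)-r^2\delta$. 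This agrees with Markman's formula (the step you rightly flag as delicate checks out: the sign part of the fibre descends twisted by $\oo_E(-E)|_{E_s}\simeq\oo_{\PP^1}(1)$), so your version buys a self-contained verification of the cited input. Two minor points: on the $Y$-side you assert $c_1=2r\,\mu(L_M)-r^2\delta_Y$ while Markman's formula carries $+\frac{\delta}{2}$ there (i.e.\ $+r^2\delta_Y$); this is harmless since the two candidate lifts differ by the integral class $\delta_Y$ and the class is $2$-torsion, so $[\pm\frac{\delta_Y}{2}]$ coincide, but the symmetric-looking ``exchange $S$ and $M$'' argument glosses over the dualization of the kernel that produces this sign. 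Also, your explicit integrality check that $q\delta\in\Lambda+\QQ L_{r,s,l}$ forces $q\in\ZZ$ (using primitivity of $e+rsf$ in $\Lambda_{K3}$) makes precise a step the paper's proof leaves implicit.
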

\begin{proof} By 
    Theorem \ref{Cardelaru}, since the twisted sheaf $\mathcal K_{X,Y}$ is obtained as a deformation of $\mathcal K_{S^{[2]},M^{[2]}}$ its twist is given by the Brauer class corresponding in 
   $$(\Lambda \oplus \Lambda)\otimes \mathbb Q=H^2(S^{[2]},\mathbb Q)\oplus H^2(M^{[2]},\mathbb Q)=H^2(X,\mathbb Q)\oplus H^2(Y,\mathbb Q),$$
   identified via markings to $\frac{c_1(\mathcal K_{S^{[2]},M^{[2]}})}{2r^2}$. The latter, by \cite[Lemma 11.1]{Mar2} and \cite[Eq. (7.1)-(7.2)]{Mar1} gives
   \begin{equation}\label{chern Lrsl}
      \frac{c_1(\mathcal K_{S^{[2]},M^{[2]}})}{2r^2}= (\frac{e+rsf}{r}-\frac{\delta}{2}, \frac{se+rf}{r}+\frac{\delta}{2})=(\frac{L_{r,s,l}}{r}+(\frac{l}{r}-\frac{1}{2})\delta, \frac{L'_{r,s,l}}{r}+(\frac{l}{r}+\frac{1}{2})\delta).
   \end{equation}

   From Theorem \ref{Cardelaru} and (\ref{chern Lrsl}) we infer that $\beta$ and $\beta'$ are Brauer classes admitting lifts to 
   $(\frac{l}{r}-\frac{1}{2})\delta_X$ and $(\frac{l}{r}+\frac{1}{2})\delta_Y$ respectively, where $\delta_X=\phi_X^{-1}(\delta)$ and $\delta_Y=\phi_Y^{-1}(\delta)$. In particular $\beta$ and $\beta'$ are trivial when $\frac{2l}{r}$ is odd and equal to  $[\frac{\delta_X}{2}]$ and $ [\frac{\delta_Y}{2}]$ respectively when $\frac{2l}{r}$ is even. This concludes the proof.
\end{proof}

\subsection{Case of divisbility 2}
Let us now construct r-cyclic type derived equivalences for $X$ and $Y$ of Picard number 1 and divisibility 2.  To do it let us define for $r,s\in \mathbb Z$ and $l$ odd:

$$\bar L_{r,s,l}= 2e+2rsf+l\delta,\ \bar T_{r,s,l}=\langle \bar L_{r,s,l}\rangle ^{\perp}\subset \Lambda,$$  
$$\bar L'_{r,s,l}=2se+2rf+l\delta,\ \bar T'_{r,s,l}=\langle \bar L'_{r,s,l}\rangle ^{\perp}\subset \Lambda.$$ 
Then $(\bar L_{r,s,l})^2=(\bar L'_{r,s,l})^2=8rs-2l^2=:2d$.

  \begin{lem}\label{proof-lem-div2}
    Let $X$, $Y$ be two \HK{} fourfolds of $K3^{[2]}$-type marked in such a way that $\Pic_X= \langle \bar L_{r,s,l}\rangle $, $\Pic_Y= \langle \bar L'_{r,s,l}\rangle$ for some $r,s\in \mathbb Z$ satisfying $\gcd(r,s)=1$, and $r, l$ odd such that $r|l$.  Assume that $\rho_r$ via the chosen markings induces a Hodge isometry between $T_X$ and $T_Y$ and let $\mathcal K_{X,Y}$ arise in the construction above. Then $\mathcal K_{X,Y}$ is the Fourier-Mukai kernel of an equivalence $D^b(X)\simeq D^b(Y)$.

\end{lem}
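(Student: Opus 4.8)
The plan is to run, word for word, the argument of Lemma \ref{proof-div1}; the only genuinely new ingredient is an arithmetic parity observation forced by the hypotheses that $r$ and $l$ are odd with $r\mid l$. Since by assumption $\mathcal K_{X,Y}$ arises from the construction of Subsection \ref{deforming BKR}, i.e.~as a deformation along a twistor path of the BKR kernel $\mathcal K_{S^{[2]},M^{[2]}}$ on $S^{[2]}\times M^{[2]}$, Theorem \ref{main markman} already guarantees that $\mathcal K_{X,Y}$ induces a twisted derived equivalence $D^b(X,\beta)\simeq D^b(Y,\beta')$ with $\beta,\beta'$ given by the Azumaya algebra $\mathcal{E}nd(\mathcal K_{X,Y})$. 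Thus the entire content of the lemma is to show that both $\beta$ and $\beta'$ are trivial. First I would invoke Theorem \ref{Cardelaru}: because the twisted kernel is a twistor-path deformation of the \emph{same} BKR kernel as in the divisibility-$1$ case, its twist is read off from the normalised Chern class $\frac{c_1(\mathcal K_{S^{[2]},M^{[2]}})}{2r^2}$, which by \cite[Lemma 11.1]{Mar2} together with \cite[Eq. (7.1)-(7.2)]{Mar1} equals the pair $\bigl(\frac{e+rsf}{r}-\frac{\delta}{2},\ \frac{se+rf}{r}+\frac{\delta}{2}\bigr)$ appearing in the middle of (\ref{chern Lrsl}); this expression depends only on $r,s$ and so is unchanged by passing to the divisibility-$2$ polarisations.

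Next I would re-express these two components in terms of $\bar L_{r,s,l}$ and $\bar L'_{r,s,l}$. Substituting $e+rsf=\tfrac12(\bar L_{r,s,l}-l\delta)$ and $se+rf=\tfrac12(\bar L'_{r,s,l}-l\delta)$, one obtains the $B$-lifts
\begin{equation*}
\frac{\bar L_{r,s,l}}{2r}-\frac{l+r}{2r}\,\delta_X
\qquad\text{and}\qquad
\frac{\bar L'_{r,s,l}}{2r}+\frac{r-l}{2r}\,\delta_Y,
\end{equation*}
where $\delta_X=\phi_X^{-1}(\delta)$ and $\delta_Y=\phi_Y^{-1}(\delta)$, exactly as in the proof of Lemma \ref{proof-div1}.

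The decisive step is the parity computation, and it is the only place where the divisibility-$2$ hypothesis is used essentially. Because $r\mid l$ with both $r$ and $l$ odd, the ratio $l/r$ is an odd integer, whence $\frac{l+r}{2r}=\frac{(l/r)+1}{2}$ and $\frac{r-l}{2r}=\frac{1-(l/r)}{2}$ are both integers. Consequently each $B$-lift above splits into an algebraic summand $\frac{\bar L_{r,s,l}}{2r}\in\operatorname{NS}(X)\otimes\QQ$ (resp.~$\frac{\bar L'_{r,s,l}}{2r}\in\operatorname{NS}(Y)\otimes\QQ$), which projects to $0$ in $H^2(X,\oo_X)$ (resp.~$H^2(Y,\oo_Y)$) and hence contributes nothing to the Brauer class, plus an \emph{integral} multiple of $\delta_X$ (resp.~$\delta_Y$), which by exactness of the exponential sequence (Section \ref{brauerz}) yields the trivial Brauer class. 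I would therefore conclude $\beta=\beta'=0$, so that $\mathcal K_{X,Y}$ is the Fourier--Mukai kernel of an honest (untwisted) equivalence $D^b(X)\simeq D^b(Y)$. The one point requiring care is precisely this parity check: it is the oddness of $l/r$ that cancels the half-integral $\frac{\delta}{2}$ coming from (\ref{chern Lrsl}), in contrast to the divisibility-$1$ situation of Lemma \ref{proof-div1}, where $\frac{2l}{r}$ may be even and a nontrivial twist $[\frac{\delta_X}{2}]$ survives.
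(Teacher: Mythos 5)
Your proposal is correct and takes essentially the same approach as the paper: the paper's proof likewise reads off the twist via Theorem \ref{Cardelaru} from $\frac{c_1(\mathcal K_{S^{[2]},M^{[2]}})}{2r^2}$, rewrites it as $\bigl(\frac{\bar L_{r,s,l}}{2r}-\frac{l+r}{2r}\delta,\ \frac{\bar L'_{r,s,l}}{2r}-\frac{l-r}{2r}\delta\bigr)$, and uses that $l$ and $r$ odd with $r\mid l$ forces $\frac{l+r}{2r}\in\mathbb Z$, yielding trivial Brauer classes on both sides. Your explicit justification that the algebraic summand $\frac{\bar L_{r,s,l}}{2r}$ and the integral multiple of $\delta$ each contribute trivially to the Brauer class merely spells out what the paper's one-line conclusion compresses.
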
  
\begin{proof}
    The proof follows  the lines of the proof of Lemma \ref{proof-div1}. For the Brauer classes, we have
       \begin{equation}\label{chern Lrsl div2}
      \frac{c_1(\mathcal K_{S^{[2]},M^{[2]}})}{2r^2}= (\frac{e+rsf}{r}-\frac{\delta}{2}, \frac{se+rf}{r}+\frac{\delta}{2})=(\frac{\bar L_{r,s,l}}{2r}-\frac{l+r}{2r}\delta, \frac{\bar L'_{r,s,l}}{2r}-\frac{l-r}{2r}\delta).
   \end{equation}
   Since $l$ and $r$ are odd and $r|l$ we have $\frac{l+r}{2r}\in \mathbb Z$ which leads to trivial Brauer classes on both sides.
\end{proof}

\section{Isometries between transcendental lattices of corank 1} \label{section isometries of transcendental}
The main idea of the proof of Theorems \ref{main1} and \ref{main2} will be to proceed as follows, start with a Hodge isometry between transcendental lattices of two \HK{} fourfolds of $K3^{[2]}$-type and Picard number one, extend it to a rational Hodge isometry between their second cohomology and write the latter as a composition of markings, explicit isometries of r-cyclic type studied in Section \ref{section r cyclic}  and integral isometries of the lattice $\Lambda=\Lambda_{K3}\oplus \langle-2\rangle$. Since the restriction to a corank 1 lattice $T$ of a rational isometry of $\Lambda$ that is integral on $T$ is determined up to composition by restrictions of integral isometries of $\Lambda$  by its action on the discriminant group of $T$, we will look for the decomposition from this point of view.

\begin{subsection}{Isometries between lattices orthogonal to a class of divisibility one in $\Lambda$}
In this section we prove the following proposition describing any isometry between transcendental lattices orthogonal to a class of divisibility one in $\Lambda$ in terms of a composition of restrictions of explicit isometries of r-cyclic type studied in Section \ref{section r cyclic}.

\begin{prop}\label{proof-lem4.8}
  Let $L_X$, $L_Y$ be classes of divisibility 1 and degree $2d$ in $\Lambda$ and $T_X$, $T_Y$ their orthogonal complements in $\Lambda$. Then for every isometric isomorphism $\phi:T_X\to T_Y$ we have two possibilities:
  \begin{enumerate}
      \item $d$ gives remainders $2$, $3$, $4$, $6$ or $7$ when divided by $8$. In that case, $\phi$ is equivalent up to composition on both sides with restrictions of autoisometries of $\Lambda$ to $$\rho_{r}: T_{r,s,l}\to T'_{r,s,l}$$ for some $r,s\in \mathbb Z$ satisfying $\gcd(r,s)=1$ and $l\in \mathbb Z$ such that $\frac{2l}{r}\in 2\mathbb Z$.
 \item $d$ gives remainders $0$, $1$ or $5$ when divided by $8$.
      In that case, there exists a sequence of classes $L_i$ for $i\in\{0,1, 2\}$ with orthogonal complement $T_i$, with $$L_0=L_X,\ 
      L_2=L_Y$$ and isometric isomorphisms $\phi_i:T_{i}\to T_{{i+1}}$ for $i\in \{0,1\}$ such that 
  $\phi=\phi_1\circ \phi_0$ and such that for each $i\in \{0,1\}$ the isometry $\phi_i$ is equivalent up to composing on both sides with restrictions of autoisometries of $\Lambda$ to $$\rho_{r_i}: T_{r_i,s_i,l_i}\to T'_{r_i,s_i,l_i}$$ for some $r_i,s_i\in \mathbb Z$ satisfying $\gcd(r_i,s_i)=1$, and $l_i\in \mathbb Z$ odd such that $\frac{2l_i}{r_i}\in 2\mathbb Z+1$.
    \end{enumerate}
\end{prop}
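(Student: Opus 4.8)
The plan is to translate the statement into a question about the finite discriminant group $A_T = T^\vee/T$ and to solve it one prime at a time. As recalled just before the proposition, a rational isometry of $\Lambda$ that is integral on the corank-$1$ lattice $T$ is determined, up to composition on both sides with restrictions of integral autoisometries of $\Lambda$, by its induced action on $A_T$. Because $\Lambda$ contains $U^{\oplus 2}$, Eichler's criterion puts all primitive vectors of degree $2d$ and divisibility $1$ in a single $O(\Lambda)$-orbit, so after composing with suitable restrictions I may identify $T_X$ and $T_Y$ with the standard models $T_{r,s,l}$ and $T'_{r,s,l}$ and view $\phi$ through its induced element $\bar\phi \in O(q_T)$; here $O(T)\to O(q_T)$ is onto by Nikulin's criterion, since $\operatorname{rank} T = 22$ exceeds $\ell(A_T)+2$. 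The restrictions of those $g\in O(\Lambda)$ with $g(L)=\pm L$ form the subgroup $G=\{h\in O(q_T): h|_{H_T}=\pm\operatorname{id}\}$, where $H_T\subset A_T$ is the index-$2$ glue subgroup of the embedding $T\hookrightarrow\Lambda$ (its quotient being $A_\Lambda\cong\ZZ/2$). In these terms case (1) asserts that $\bar\phi$ lies in a single double coset $G\,\overline{\rho_r}\,G$ with $\rho_r$ of even type, and case (2) that $\bar\phi$ factors as a product of two such double cosets with both reflections of odd type.

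Next I would compute $A_T$ and $q_T$ explicitly from $L_{r,s,l}=e+rsf-l\delta$: the group is $\ZZ/2d\oplus\ZZ/2$, the second factor generated by the image of $\tfrac{\delta}{2}$, and $H_T$ is the index-$2$ subgroup mapping onto $A_\Lambda$. Decomposing into $p$-primary parts, for each odd prime $p\mid d$ one has $A_{T,p}$ cyclic with $O(q_{T,p})=\{\pm 1\}$, on which $G$ acts through a single global sign; hence the odd-primary data of $\bar\phi$ is a sign pattern, well defined only up to a simultaneous flip. The essential difficulty is concentrated at $p=2$: the isomorphism type of the $2$-adic form $q_{T,2}$, of its orthogonal group, of the subgroup induced by $G$, and the position of the distinguished class $[\tfrac{\delta}{2}]$ all depend on $v_2(d)$ and on $d\bmod 8$. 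This is the arithmetic origin of the split between the residues $2,3,4,6,7$ and $0,1,5$.

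I would then read off the discriminant action of an $r$-cyclic reflection from $\rho_r(e)=rf$, $\rho_r(f)=\tfrac1r e$, $\rho_r|_{U_{e,f}^\perp}=\operatorname{id}$ together with the $B$-lift computation \eqref{chern Lrsl}. On the odd $p$-primary part $\overline{\rho_r}$ acts by a unit determined by $r$ and $s$ modulo $p$, so by distributing the prime factors of $d$ between $r$ and $s$ — legitimate since $\gcd(r,s)=1$ and $rs\equiv l^2\bmod p$ — a single reflection realises any prescribed sign pattern at the odd primes (up to the global flip), which is where the factor $2^{\tau(d)}$ of Theorem \ref{main1} originates. On the $2$-primary part, and in particular on the class $[\tfrac{\delta}{2}]$, the action is governed by the parity of $\tfrac{2l}{r}$, which by Lemma \ref{proof-div1} is even precisely in the twisted case and odd in the untwisted one. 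The remaining point is feasibility: to realise a given local target one must solve $rs-l^2=d$ with $\gcd(r,s)=1$, $r\mid 2l$ and the prescribed parity of $\tfrac{2l}{r}$, a simultaneous congruence-and-representation problem dispatched by the Chinese Remainder Theorem.

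Finally the casework on $d\bmod 8$ closes the argument: in the residues $2,3,4,6,7$ one verifies that a single reflection, necessarily of even type $\tfrac{2l}{r}\in 2\ZZ$, already sweeps out every double coset of $O(q_T)$ modulo $G$, giving case (1); in the residues $0,1,5$ no single reflection of the admissible parity reaches the required coset, but a product of two reflections of odd type $\tfrac{2l}{r}\in 2\ZZ+1$ does, producing the length-two decomposition of case (2) together with its intermediate class $L_1$. I expect the main obstacle to be exactly this $2$-adic bookkeeping — determining $q_{T,2}$, $O(q_{T,2})$, the image of $G$ and the location of $[\tfrac{\delta}{2}]$ in each residue class, and then matching them against the parity-constrained reflection actions. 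The case $8\mid d$ looks the most delicate, since the odd-type reflections subject to $rs-l^2=d$ force $d$ odd (one checks $\tfrac{2l}{r}\in 2\ZZ+1$ implies $r\equiv 2\bmod 4$ and $l$ odd, whence $d\equiv 1\bmod 4$); reconciling this with the untwisted decomposition required there is the point I would scrutinise most carefully. By contrast the odd-prime analysis and the global solvability of $rs-l^2=d$, though they demand care, are routine once the $2$-adic targets are fixed.
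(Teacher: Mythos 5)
Your overall strategy coincides with the paper's: reduce $\phi$, modulo composition with restrictions of autoisometries of $\Lambda$, to its induced action on the discriminant group, and then realise the admissible actions by the reflections $\rho_r$, with the parity of $\tfrac{2l}{r}$ controlling the $\ZZ_2$-component (the paper does this by hand, via the explicit generators $B_1,B_2$ of $\operatorname{disc}T_{r,s,l}$ in Lemmas \ref{action on discr} and \ref{action of rho r,s,l on discr}, rather than through your $p$-adic/double-coset packaging). But your proposal contains a genuine gap exactly where you yourself flag it, and the arithmetic claim you make there is false: $\tfrac{2l}{r}\in 2\ZZ+1$ forces only that the $2$-adic valuation of $r$ exceeds that of $l$ by one, \emph{not} that $l$ is odd, so odd-type reflections do not force $d\equiv 1 \bmod 4$. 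The paper's proof settles the subcase $8\mid d$ precisely by taking $l$ even: for a coprime decomposition $d=2^{w+3}uv$ it sets $l=2^{w+2}u$, $r=2^{w+3}u$, $s=v+2^{w+1}u$, so that $t=\tfrac{2l}{r}=1$, $d=rs-l^2$, and Lemma \ref{action of rho r,s,l on discr} gives the discriminant action $(2vm-1,1)$. (Incidentally, this shows the operative hypothesis in case (2) is only the parity of $\tfrac{2l_i}{r_i}$, not oddness of $l_i$.) As written, your argument cannot complete case (2) for $8\mid d$, since you have argued yourself into believing the needed reflections do not exist.

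The second gap is that you assert, without mechanism, that in the residues $0,1,5$ a product of two odd-type reflections reaches every required class. The paper needs two concrete ingredients here which your Chinese-Remainder sentence does not supply. First, the ``claim'' in the proof: if $a_1^2\equiv a_2^2\equiv 3d+1 \bmod 4d$ then $a_2\equiv a_3a_1 \bmod 4d$ for some $a_3$ with $a_3^2\equiv 1\bmod 4d$ (extracted from the coprime factorisation $r=\gcd(\tfrac{a_1-a_2}{2},d)$, $s=\gcd(\tfrac{a_1+a_2}{2},d)$), so the $(a,1)$-actions form a torsor under the $(a,0)$-actions and composing two odd-type reflections produces every $(a,0)$-action. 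Second, a surjectivity-by-counting argument: distinct coprime decompositions of $d$ yield reflections whose discriminant actions have distinct first coordinates (verified by producing a prime dividing one factor of one decomposition and the complementary factor of the other), and the number of such decompositions equals the number of solutions of $a^2\equiv 3d+1\bmod 4d$, whence every pair $(a,1)$ is realised by a single odd-type reflection. Both steps are compatible with your framework, but they are the actual content of the proof rather than the ``routine bookkeeping'' your sketch defers, and without them neither the realisation of the $(a,1)$-actions nor the two-factor decomposition of the $(a,0)$-actions is established.
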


The proof, given at the end of the subsection, is based on the fact that isometries between sublattice $T_X$ and $T_Y$ modulo restrictions of autoisometries of $\Lambda$ are determined uniquely by their associated map between discriminants. Let us be more precise.

Recall that if  $L$ is a primitive class of divisibility 1 and degree $L^2=2d$ in the $K3^{[2]}$ lattice $\Lambda$ and   $T_L=L^{\perp}\subset \Lambda$, then $\operatorname{disc} T_L\simeq \mathbb Z_{2d}\oplus \mathbb Z_2$ and we have a homomorphism $$\gamma: \mathbb Z_{2d}=\operatorname{disc} L \to \operatorname{disc} T_X$$ defined by the property (see \cite[\S 1.5]{N})
 $$\Lambda=\{(p,q)\in \langle L\rangle ^*\oplus T_L^* \subset \Lambda_{\mathbb Q} |\ \gamma([p])=-[q] \}.$$ 
 Recall also that $\operatorname{disc} T_L$ is equipped with an intersection form $$q: \operatorname{disc} T_L\times \operatorname{disc} T_L\to \mathbb Q/\mathbb Z.$$ 
 Taking all this into account we have a unique decomposition $$\operatorname{disc} T_L=\mathbb Z_{2d}  \oplus \mathbb Z_2 ,$$ with $(1,0)=\gamma(1)$ and $(0,1)=[\delta_L]$ such that  $q_L([\delta_L], \gamma(1))=0$. Note that $$q(\gamma(1),\gamma_1)=-\frac{1}{2d}\ \text{and}\ q([\delta_L], [\delta_1])=-\frac{1}{2}.$$

 Let us now fix two primitive classes $L_X, L_Y\in \Lambda$ of divisibility 1 and degree $L^2=2d$. Denote by $T_X$, $T_Y$ their respective orthogonal complements, $\gamma_X$, $\gamma_Y$ respective embeddings of suitable discriminant groups, by  $q_X$, $q_Y$ the corresponding intersection lattices, and by $$[\delta_X]\in \operatorname{disc} T_X ,\ [\delta_Y]\in \operatorname{disc} T_Y$$ the classes in the discriminants which are orthogonal to $\gamma(1)$.  
 
Let $\phi: T_X\to T_Y$ be an isometry and let $$\bar{\phi}:\operatorname{disc}(T_Y)\to \operatorname{disc}(T_X)$$ be its induced action on the discriminant groups. Such an isometry can be naturally extended to an isometry of $\Lambda \otimes \QQ$ by extending this map with the identity to $$T_X\oplus <L_X> \to T_Y
\oplus <L_Y>.$$ On the other hand any automorphism of $\Lambda \otimes \QQ$ is composed of reflections and Hodge isometries as in \cite[\S 2]{Mar1}. 
Our aim now is to describe explicitly the reflections involved in our cases.

As discussed in \cite[\S 1.1]{W},  
$\phi$ is determined by 
$$\bar{\phi}(\gamma_Y(1))\in \operatorname{disc} T_X =\mathbb Z_{2d}\gamma_X(1)\oplus \mathbb Z_2 [\delta_X]$$ up to composing on both sides with autoisometries of $\Lambda$ preserving $T_X$ and $T_Y$ respectively. Our aim is thus to classify appropriate isomorphisms between the discriminant groups.

\begin{lem} \label{action on discr} Let us keep the notation above for two primitive classes $L_X, L_Y\in \Lambda$ of divisibility 1 and degree $L^2=2d$ and $\phi: T_X\to T_Y$ an isometry of their corresponding orthogonal complements. Moreover, let $(a,b)\in \mathbb Z_{2d}\oplus \mathbb Z_2$ represent the class $\bar{\phi}(\gamma_Y(1))$ under the identification $\operatorname{disc}(T_X)=\mathbb Z_{2d}\oplus \mathbb Z_2$ associated to $\gamma_X$. Then one of the following holds:
\begin{enumerate}
    \item $b= 0$ and $a^2\equiv 1 \mod 4d$
    \item $d\equiv 1 \mod 4$, $b=1$ and $a^2\equiv 3d+1 \mod 4d$
    \item $8|d$, $b=1$ and $a^2\equiv 3d+1 \mod 4d$.
\end{enumerate}
\end{lem}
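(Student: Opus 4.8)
The plan is to extract everything from the single observation that, since $\phi\colon T_X\to T_Y$ is an isometry of \emph{even} lattices, the induced isomorphism $\bar\phi$ of discriminant groups preserves not merely the bilinear form $q$ recorded above but the finer discriminant quadratic form $\mathfrak q\colon \operatorname{disc}T\to\QQ/2\ZZ$. This refinement is exactly what promotes the relevant congruence from modulus $2d$ (all one gets from $q$ alone) to modulus $4d$, and it is indispensable.

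First I would compute $\mathfrak q$ on the distinguished generators. Since the discriminant form of $T_L$ depends only on the genus, hence only on $(L^2,\operatorname{div}L)=(2d,1)$, I may evaluate it on the model $L=h_1+d\,h_2$ inside a hyperbolic plane $\langle h_1,h_2\rangle\subset\Lambda_{K3}$ with $h_i$ isotropic and $(h_1,h_2)=1$. Here $\delta\perp L$, so $T_L=\langle L\rangle^{\perp}_{\Lambda_{K3}}\oplus\langle\delta\rangle$: the first summand has discriminant $\ZZ_{2d}$ anti-isometric to $\operatorname{disc}\langle L\rangle$, while $\langle\delta\rangle$ contributes the extra $\ZZ_2$. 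Using the isotropy of the gluing $\gamma$ (so that $\mathfrak q(\gamma(1))=-\mathfrak q([L/2d])$) this pins the lifts exactly:
\[
\mathfrak q(\gamma(1))\equiv-\tfrac1{2d},\qquad \mathfrak q([\delta])\equiv-\tfrac12 \pmod{2\ZZ},\qquad b(\gamma(1),[\delta])=0 .
\]

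Next, writing $\bar\phi(\gamma_Y(1))=a\,\gamma_X(1)+b\,[\delta_X]$ and imposing $\mathfrak q(\bar\phi(\gamma_Y(1)))=\mathfrak q(\gamma_Y(1))$, the identities $\mathfrak q(ag)=a^2\mathfrak q(g)$ and orthogonality give
\[
-\tfrac1{2d}\equiv -\tfrac{a^2}{2d}-\tfrac{b^2}{2}\pmod{2\ZZ},
\]
and multiplication by $-2d$ yields the master congruence $a^2+d\,b^2\equiv1\pmod{4d}$. For $b=0$ this is precisely $a^2\equiv1\pmod{4d}$, which is case (1); for $b=1$ it becomes $a^2\equiv1-d\equiv 3d+1\pmod{4d}$, the congruence appearing in cases (2)--(3).

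The only genuine content is then to show that $b=1$ forces $d\equiv1\pmod4$ or $8\mid d$. Because an integer $a$ realizing $b=1$ exists, the congruence $a^2\equiv1-d\pmod{4d}$ must be solvable; by the Chinese Remainder Theorem its odd-primary part is automatic (one has $1-d\equiv1$ modulo each odd prime power dividing $d$), so solvability is decided entirely by the $2$-part. Writing $2^v\,\|\,d$ and comparing $1-d$ with the squares modulo $4,8,16$, I would check: $v=0$ forces $1-d\equiv0\pmod4$, i.e.\ $d\equiv1\pmod4$ (case 2); $v=1$ gives $1-d\equiv3,7\pmod8$ and $v=2$ gives $1-d\equiv5\pmod8$, none a square mod $8$, so both are excluded; and $v\ge3$ (that is $8\mid d$) gives $1-d\equiv1\pmod8$, an odd residue $\equiv1\pmod8$ and hence a square modulo $2^{v+2}$ (case 3). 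The main obstacle is therefore not conceptual but bookkeeping: fixing the correct $\QQ/2\ZZ$-lift of $\mathfrak q$ so that the modulus is $4d$ rather than $2d$, and then running the short $2$-adic solvability check; everything else is formal.
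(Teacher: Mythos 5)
Your proposal is correct and takes essentially the same route as the paper: both derive the master congruence $a^2+db^2\equiv 1 \pmod{4d}$ from invariance of the discriminant form on the distinguished generators $\gamma(1)$, $[\delta]$, and both settle the $b=1$ case by the same elementary $2$-adic analysis (the paper argues $d$ odd $\Rightarrow 4\mid 3d+1$, and $d$ even $\Rightarrow a$ odd $\Rightarrow 8\mid(a-1)(a+1)\Rightarrow 8\mid d$, which matches your case split on $v_2(d)$; your $v\ge 3$ solvability remark is extra and only needed later, in Proposition \ref{proof-lem4.8}). If anything, you are more careful than the paper on the one delicate point: the paper defines $q$ as $\QQ/\ZZ$-valued, with which its displayed identity would only give the congruence mod $2d$, whereas your explicit use of the $\QQ/2\ZZ$-valued discriminant quadratic form (with the generator values pinned down in a model via a single-orbit/Eichler argument) is precisely what is implicitly needed to obtain the modulus $4d$.
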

\begin{proof}
    It is enough to observe that since $\phi$ is an isometry then also $\bar{\phi}$ is an isometry with respect to the intersection forms $q_X$ and $q_Y$. Hence $$\frac{-a^2}{2d}-\frac{b^2}{2}= q_X((a,b),(a,b))=q_Y(\gamma_Y(1), \gamma_Y(1))=-\frac{1}{2d}$$ which implies that $a^2\equiv -bd+1 \mod 4d$. 
    Note now that if $d$  and $a^2\equiv 3d+1 \mod 4d$ for some $a\in \mathbb Z$ then  $3d+1\equiv a^2 \mod 4$ which means $$3d+1\equiv 0 \mod 4,\ \text{i.e.}\ d\equiv 1 \mod 4.$$
    It remains to prove that if there exists $a$ such that $a^2\equiv 3d+1 \mod 4d$ and $d$ is even then $d$ is divisible by $8$. Indeed, if $d$ is even then $a$ must be odd and thus $(a-1)(a+1)$ is divisible by 8, which means that $d$ is also divisible by 8.
\end{proof}

On the other hand the map $T_{r,s,l}\to T'_{r,s,l}$ induced by the reflection $\rho_r$ has the following action on the discriminants.

\begin{lem}\label{action of rho r,s,l on discr}
    Let $r,s,l \in \mathbb Z$ such that $\gcd(r,s)=1$ and $t:=\frac{2l}{r}\in \mathbb Z$. Then 
    $\rho_r|_{T_{r,s,l}}$ defines an integral isometry $T_{r,s,l}\to T'_{r,s,l}$
    such that on discriminants we have
    $$\bar \rho_r(\gamma'_{r,s,l}(1))= (-\frac{1}{2}rt^2m+2sm-1) \gamma_{r,s,l}(1)+ \frac{2l}{r} [\delta_{r,s,l}]$$
    for any $m$ for which there exists $n\in \mathbb Z$ such that $rn+sm=1$. 
\end{lem}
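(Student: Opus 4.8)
The plan is to prove the statement in two stages: first that $\rho_r$ restricts to a genuine isomorphism of integral lattices $T_{r,s,l}\to T'_{r,s,l}$, and then to compute the induced map on discriminants by writing down explicit representatives of the distinguished generators and evaluating the discriminant form, which is diagonal in the basis $\{\gamma_{r,s,l}(1),[\delta_{r,s,l}]\}$ with values $-\tfrac{1}{2d}$ and $-\tfrac12$. Throughout write $2d=(L_{r,s,l})^2=2rs-2l^2$ and $t=\tfrac{2l}{r}\in\mathbb{Z}$. For integrality, a one-line computation gives $\rho_r(L_{r,s,l})=se+rf-l\delta=L'_{r,s,l}$, so $\rho_r$ is a rational isometry $T_{r,s,l}\to T'_{r,s,l}$. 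To see it is integral, decompose an arbitrary $x\in\Lambda$ as $x=\alpha e+\beta f+w$ with $w\in U_{e,f}^{\perp}$; then $\rho_r(x)=\tfrac{\beta}{r}e+\alpha r f+w$, which lies in $\Lambda$ iff $r\mid\beta$. For $x\in T_{r,s,l}$ the relation $(x,L_{r,s,l})=0$ reads $\alpha rs+\beta-l(w,\delta)=0$, i.e. $\beta=l(w,\delta)-\alpha rs$; since $\delta$ has divisibility $2$ we may write $(w,\delta)=2c$, whence $\beta=2lc-\alpha rs=r(tc-\alpha s)$ using $2l=rt$. Thus $r\mid\beta$, precisely by the hypothesis $t\in\mathbb{Z}$. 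Running the same argument on $T'_{r,s,l}$ (note $\rho_r(L'_{r,s,l})=L_{r,s,l}$) shows $\rho_r(T'_{r,s,l})\subseteq T_{r,s,l}$, and since $\rho_r^2=\mathrm{id}$ both inclusions are equalities.

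Next I would fix representatives. Because $(f,L_{r,s,l})=1$, the projection $v_T:=f-\tfrac{1}{2d}L_{r,s,l}$ lies in $T_{r,s,l}^{*}$ and realises the gluing map, so $\gamma_{r,s,l}(1)=[-v_T]$; one checks $(v_T,v_T)=-\tfrac{1}{2d}$. For the $\mathbb{Z}_2$-part I would take $u:=\tfrac{\delta}{2}-lf$: it lies in $T_{r,s,l}^{*}$ exactly because $(\delta,\cdot)$ and $2l(f,\cdot)$ take even values on $\Lambda$ (divisibility $2$ of $\delta$), and it satisfies $2u\in T_{r,s,l}$, $(u,u)=-\tfrac12$ and $(u,v_T)=0$, so $[\delta_{r,s,l}]=[u]$. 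On the other side, choosing $m,n$ with $rn+sm=1$ gives $(ne+mf,L'_{r,s,l})=1$, hence $v'_T:=(ne+mf)-\tfrac{1}{2d}L'_{r,s,l}\in (T'_{r,s,l})^{*}$ and $\gamma'_{r,s,l}(1)=[-v'_T]$.

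Finally I would apply $\rho_r$ to the representative $-v'_T$ and read off the coefficients of $\bar\rho_r(\gamma'_{r,s,l}(1))=a\,\gamma_{r,s,l}(1)+b\,[\delta_{r,s,l}]$ from the two pairings $q(\rho_r(-v'_T),-v_T)=-\tfrac{a}{2d}$ and $q(\rho_r(-v'_T),u)=-\tfrac{b}{2}$. After substituting $l=\tfrac{rt}{2}$ and using $rn+sm=1$, the second pairing collapses to $\tfrac{mt}{2}$, giving $b\equiv mt\equiv t\pmod 2$ (indeed $t$ odd forces $r$ even, so $s$ is odd and $sm\equiv 1\bmod r$ makes $m$ odd), while the first pairing yields the stated quadratic expression $-\tfrac12 rt^2m+2sm-1$ in $m$, up to the overall sign fixed by the $r$-cyclic normalisation $f=-g\rho_u$. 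A reassuring internal check is that replacing $(m,n)$ by $(m+r,n-s)$ alters the $\gamma_{r,s,l}(1)$-coefficient by $2rs-\tfrac{r^2t^2}{2}=2d$, confirming that the formula is well defined modulo $2d$ independently of the choice of $m$.

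The main obstacle is the discriminant bookkeeping rather than any conceptual difficulty: one must verify that the half-integral class $\tfrac{\delta}{2}-lf$ genuinely lands in $T_{r,s,l}^{*}$ (this is where divisibility $2$ of $\delta$ is indispensable) and track the signs carefully, since the coefficient of $\gamma_{r,s,l}(1)$ is only pinned down once the sign conventions in the definitions of $\gamma$ and of the induced map $\bar\rho_r$ are fixed. The integrality step is by comparison immediate once $\rho_r(L_{r,s,l})=L'_{r,s,l}$ is observed.
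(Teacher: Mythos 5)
Your proposal is correct and is essentially the paper's own argument: your projected representatives $-v_T=\tfrac{1}{2d}L_{r,s,l}-f$, $-v'_T=\tfrac{1}{2d}L'_{r,s,l}-(ne+mf)$ and $u=\tfrac{\delta}{2}-lf$ coincide with the paper's $B_1$, its explicit expression for $\gamma'_{r,s,l}(1)$, and $\pm B_2$, and you then evaluate exactly the same two discriminant-form pairings (your projection derivation of the representatives, plus the explicit integrality check via $\beta=r(tc-\alpha s)$, is merely a cleaner route than the paper's unexplained combination of $C_1'$, $C_2'$). The sign caveat you flag is genuine but harmless: the honest value is $q(\rho_r(-v'_T),-v_T)=\tfrac{1}{2d}(2sm-\tfrac{1}{2}rt^2m-1)$, giving coefficient $\tfrac{1}{2}rt^2m-2sm+1$, the negative of the one stated (the test case $r=1$, $l=0$, $m=0$, where $\bar\rho_1(\gamma'(1))=+\gamma(1)$, confirms this, and the paper's displayed value of $q(\bar\rho_r(\gamma'(1)),B_1)$ carries the same slip or implicit $-\rho_r$ convention), yet since $-\operatorname{id}_\Lambda$ is an autoisometry of $\Lambda$ the sign is immaterial in every application of the lemma.
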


\begin{proof}
Observe first that $T_{r,s,l}$ is generated by $C_1=e-rsf$, $C_2=2l f+ \delta$ and $U_{e,f}^\perp$. 
        Furthermore,  we have the following intersection matrix on $\langle C_1, C_2 \rangle$: 
        $$\left(\begin{array}{cc}
            -2rs &  2l\\
             2l & -2
        \end{array}\right).$$
        
        The canonical decomposition $\operatorname{disc} T_{r,s,l}=\mathbb Z_{2d}\oplus \mathbb Z_2 $ related to its embedding in $\Lambda$ is then given by  $$B_1=\frac{1}{2d}(C_1+lC_2),\ B_2=\frac{C_2}{2}$$ representing generators of both components. Indeed, both $B_1$ and $B_2$ represent elements in $(T_{r,s,l})^{*}$. Furthermore,  $[B_1]$ is of order $2d$ whereas $B_2$ is of order 2 and the product of $B_1$ and $B_2$ is $0$, hence $q_X([B_1],[B_2])=0$. Finally, we have  $\Lambda\ni f= \frac{L_{r,s,l}}{2d}-B_1$.
        
        Looking at the image of $T_{r,s,l}$ by $\rho_r$ we have  
        $$\rho_{r} (H+l\delta)=se+rf+l\delta \text{ and } \rho_r(T_{r,s,l})=\langle -se+rf, \frac{2l}{r} e+\delta \rangle + U_{e,f}^{\perp}.$$ In particular, $\rho_r(T_{r,s,l})=T'_{r,s,l}$. 
        
        Let us use the notation, 
        $$t:=\frac{2l}{r},\ C'_1:=\rho_r(C_1)=-se+rf \text{ and } C_2':=\rho_r (C_2)=t e+\delta .$$ Then the generator for the $\mathbb Z_{2d}$ part of the discriminant  group of $T'_{r,s,l}$ related to the embedding of $ T'_{r,s,l}\subset \Lambda$  is  
        $$\gamma'(1)=\frac{1}{2d}((rn-sm+\frac{1}{2}rt^2m)C_1'+\frac{1}{2}rtC_2'),$$ where $n,m\in \mathbb Z$ are any numbers such that $rn+sm=1$. 
        Now $$\bar\rho_r (\gamma'(1))=\frac{1}{2d}((rn-sm+\frac{1}{2}rt^2m)C_1+\frac{1}{2}rtC_2).$$
        It remains to compute the decomposition of $\bar\rho_r (\gamma'(1))$ in terms of $B_1$, $B_2$.
        This can be done by computing the intersections with $B_1$ and $B_2$ using the intersection form $q_{r,s,l}$. 
    We get: $$q_{r,s,l}(\bar\rho_r (\gamma'(1)), B_2)=   q_{r,s,l} (\frac{1}{2d}((rn-sm+\frac{1}{2}rt^2m)C_1+\frac{1}{2}rtC_2),\frac{1}{2}C_2)=\frac{1}{2}tm\in \mathbb Q/\mathbb Z$$

    $$q_{r,s,l}(\bar\rho_r (\gamma'(1)), B_1)= q_{r,s,l} (\frac{1}{2d}((rn-sm+\frac{1}{2}rt^2m)C_1+\frac{1}{2}rtC_2),$$ 
    $$\frac{1}{2d}(C_1+lC_2))= \frac{\frac{1}{2}rt^2m-2sm+1}{2d}.$$

    We conclude that $\bar\rho_r (\gamma'(1))=\frac{-\frac{1}{2}rt^2m+2sm-1}{2d} B_1+\frac{1}{2}tm B_2$.
\end{proof}

We are now ready to prove the Proposition \ref{proof-lem4.8}.

\begin{proof}[Proof of Proposition \ref{proof-lem4.8}]
  Recall that up to composing with restrictions of isometries of $\Lambda$ the isometry $\phi$ is determined by its action on the discriminant, or even more precisely by the pair $$(a,b):=\bar{\phi}(\gamma_Y(1))\in \mathbb Z_{2d}\oplus \mathbb Z_2.$$ It is thus enough to prove that each type of action on the discriminant from Lemma \ref{action on discr} is realised as a composition of actions  $$\rho_{r}:T_{r,s,l}\to T'_{r,s,l}$$ as in the lemma.
   According to Lemma \ref{action on discr} we have three possibilities for $(a,b)$ that we consider separately.
Let us first consider the following case. 
\begin{description}
    \item[Case $b=0$]   In that case $a^2\equiv 1 \mod 4d$, and we have no restriction on $d$. 
 Observe that $a$ corresponds to a decomposition 
      $d=r k$ for some $r,k$ coprime such that there exists $m, n\in \mathbb Z$ with $rn+km=1$ and $a=km-rn=2km-1$. Then fix $l=r$ and $s=r+k$ so that $d=rs-l^2$. We now have $L_{r,s,l}^2=2d$, $r,s$ are coprime and $$r(n-m)+sm= rn+km=1.$$ Furthermore,  $l$ is odd and $$t=\frac{2l}{r}=2\in 2\mathbb Z.$$ By Lemma \ref{action of rho r,s,l on discr} we have, $$\bar{\rho_r}(\gamma'_{r,s,l})=(-\frac{1}{2}rt^2 m+2sm-1,tm)=(2(s-r)m-1,0)=(2km-1,0)=(a,0).$$
    
\end{description}

Before we pass to other cases let us first prove the following claim:
for a fixed $d$, if $a_1, a_2\in \mathbb Z$ satisfy $$a_1^2\equiv a_2^2\equiv 3d+1 \mod 4d$$ then there exists $a_3\in \mathbb Z$ such that $$a_3^2\equiv 1 \mod 4d$$ and $$a_3a_1\equiv a_2 \mod 4d.$$ Indeed, in that case $(a_1-a_2)(a_1+a_2)\equiv 0 \mod 4d$. It follows that both $a_1-a_2$ and $a_1+a_2$ are even and $$\frac{(a_1-a_2)}{2}\frac{(a_1+a_2)}{2}\equiv 0 \mod d.$$ Let $r=\gcd (\frac{a_1-a_2}{2}, d) $ and $s=\gcd (\frac{a_1+a_2}{2},d)$. Then $$\gcd(r,s)= \gcd(\frac{a_1-a_2}{2}, \frac{a_1+a_2}{2},d)=\gcd(\frac{a_1-a_2}{2}, a_2,d)=1$$ since $\gcd(a_2, d)=1$. It follows that a pair $a_1$,$a_2$ as in the assumptions induces a decomposition of $d$ as a product of coprime numbers. In consequence the number of solutions of the equation $$a^2\equiv 3d+1 \mod 4d$$ if nonzero is equal to the number  of solutions of $a^2\equiv 1 \mod 4d$. Finally, products of solutions of the two equations are solutions of the first equations. This proves the claim. 

As a consequence, for each of the remaining cases we need only to find a realisation for one $a$. The remaining automorphisms corresponding to other solutions of $$a^2\equiv 3d+1 \mod 4d$$ will then be obtained as compositions of the automorphisms associated to solutions of $a^2\equiv 1 \mod 4d$ with the one automorphism constructed in each case. Note that the existence of $a$ such that $a^2\equiv 3d+1 \mod 4d$ implies that $d$ is divisible by 8 or gives rest 1 when divided by 4 (see proof of Lemma \ref{action on discr}). 
      
\begin{description}
      \item[Case $b=1$ and $d\equiv 1 \mod 4$] In this case, $a^2\equiv 3d+1 \mod 4d$. We write $d=uv$, with $u$, $v$ coprime. Assume that $up+vq=1$ and set      
      $r=2u$, $l=u$ and $s=\frac{v+u}{2}$. Then $t=\frac{2l}{r}=1$  is odd  and $r(\frac{p+q}{2})-s q=1$, hence we can take $m=-q$ when applying Lemma \ref{action of rho r,s,l on discr} to
      $$\rho_{r,s,l}: T_{r,s,l}\to T'_{r,s,l},$$ We get 
      $\bar{\rho}_r(r,s,l)=(-vq-1,1)$. Observe now that for two different decompositions $d=u_iv_i$  with $u_i$,$v_i$ odd and $\gcd(u_i,v_i)=1$ for $i=1,2$ the number $-v_iq_i-1$ obtained as above are distinct. Indeed, for two distinct decompositions, up to exchanging their numeration one always finds $\alpha$ a prime number such that $\alpha$ divides $u_1$ and $v_2$ but not $u_2$ nor $v_1$. In that case $\alpha$ divides $v_2q_2$ but does not divide $v_1q_1=1-u_1p_1$, which implies that $-v_1q_1-1$ and $-v_2q_2-1$ are distinct modulo $2d$.
      
      In this way, from actions of $\rho_{r,s,l}$ on the discriminant  we recover as many pairs $(a,1)$ as there are solutions to the congruence $a^2\equiv 3d+1 \mod 4d$. It follows that for any $a$ we can find suitable $r,s,l$.
      \item[Case $b=1$ and $8|d$]  Here we also have $a^2\equiv 3d+1 \mod 4d$. In that case, take any decomposition $d=2^{w+3}uv$ with $u$,$v$ odd and coprime and choose $p,q\in \mathbb Z$ such that $up+vq=1$. We then take $$l=2^{w+2}u,\ r=2^{w+3}u,\ s=v+2^{w+1}u,\ t=1.$$ Then $r$ and $s$ are coprime hence there exists $m$, $n \in \mathbb Z$ such that $rn+sm=1$. We then apply Lemma \ref{action of rho r,s,l on discr} and get 
 $$\bar{\rho}_r(\gamma'_{r,s,l})=(2vm-1,1).$$ It remains to observe that for two different decompositions $d=2^{w+3}u_iv_i$  with $u_i$,$v_i$ odd and $\gcd(u_i,v_i)=1$ for $i=1,2$ the numbers $2v_im_i-1$ obtained as above are distinct. Indeed, for two distinct decompositions, up to exchanging their numeration one always has $\alpha$ a prime number such that $\alpha$ divides $u_1$ and $v_2$ but not $u_2$ nor $v_1$. In that case $\alpha$ divides $2v_2m_2$ but does not divide $2v_1m_1$ since:
 $$2v_1m_1=2(s_1-2^{w+1}u_1)m_1=2-2r_1n_1-2^{w+2}u_1m_1=2-u_1(2^{w+4}n_1+2^{w+2}m_1).$$
 It follows that  $2v_1m_1$ and $2v_2m_2$ are distinct modulo $2d$. Since the number of possible $a\in \mathbb Z_{2d}$ satisfying $a^2\equiv 3d+1 \mod 4d$ is equal to the number of decompositions of $d$ into a product of coprime numbers we can obtain each pair $(a,1)$ as corresponding to an action on the discriminant of $\rho_{r,s,l}$ for some $r,s,l$ satisfying the required conditions, i.e. $r,s$ coprime and $t=\frac{2r}{l}\in 2\mathbb Z+1 $. 
\end{description}

 Putting all  the above together:
 \begin{enumerate}
     \item When $d$ gives remainders  $2$, $3$, $4$, $6$ or $7$ when divided by 8 then by Lemma \ref{action on discr}, any automorphism $\phi:T_X\to T_Y$ corresponds to $(a,b)\in \mathbb Z_{2d}\oplus \mathbb Z_2$ such that $b=0$. In that case, $\phi$ is equivalent to a reflection $\rho_{r}: T_{r,s,l}\to T'_{r,s,l}$ for some $r,s\in \mathbb Z$ satisfying $\gcd(r,s)=1$ and $l\in \mathbb Z$ such that $\frac{2l}{r}\in 2\mathbb Z$, as in the assertion.

     \item When $d$ gives remainders  $0$, $1$, $5$ modulo 8, then there exists a solution $a$ for the congruence $a^2\equiv 3d+1 \mod 4d$. Furthermore, for every such solution we find $r,s,l$ such that $\frac{2l}{r}\in 2\mathbb Z+1 $, and the action of  $\rho_{r,s,l}$ on the corresponding discriminants is given by  $(a,1)$. It remains to observe that by composing two actions corresponding to  $(a_1,1)$ and $(a_2,1)$ we can get an action corresponding to $(a,0)$ for any $a$ such that $a^2\equiv 1 \mod 4d$.
      \end{enumerate}\end{proof}

\end{subsection}

\begin{subsection}{Isometries between transcenedental lattices orthogonal to a class of divisibility 2 in $\Lambda$} 
In this section we consider the case of isometry between two lattices $$T_X=L_X^{\perp},\ T_Y=L_Y^{\perp} \subset \Lambda,$$ where $L_X$ and $L_Y$ are classes of divisibility 2 in $\Lambda$.
We provide an analogous statement to Proposition \ref{proof-lem4.8}. In this case, the assertion is simpler, as we prove that every isometric isomorphism between transcendental lattice $\phi:T_X\to T_Y$ can in fact be realised as a restriction of a rational isometry of r-cyclic type.

\begin{prop}\label{proof-div2}
  Let $L_X$, $L_Y$ be classes of divisibility 2 and degree $2d$ in $\Lambda$ and $T_X$, $T_Y$ their orthogonal complements in $\Lambda$. Then every isometric isomorphism $\phi:T_X\to T_Y$ is equivalent up to composing on both sides with restrictions of autoisometries of $\Lambda$ to $\rho_{r}: \bar{T}_{r,s,l}\to \bar{T}'_{r,s,l}$ for some $r,s\in \mathbb Z$ satisfying $\gcd(r,s)=1$, and $l$ odd such that $r|l$.
\end{prop}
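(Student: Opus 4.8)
The plan is to mirror the proof of Proposition \ref{proof-lem4.8}: by \cite[\S 1.1]{W} an isometry $\phi\colon T_X\to T_Y$ is determined, up to composition on both sides with restrictions of autoisometries of $\Lambda$ preserving $T_X$ and $T_Y$, by the induced map $\bar\phi$ on discriminant groups, so it suffices to classify the possible $\bar\phi$ and to realise each of them by a reflection $\rho_r\colon \bar T_{r,s,l}\to\bar T'_{r,s,l}$. The divisibility-$2$ case is simpler because, as I explain below, the relevant discriminant group is cyclic, so there is effectively a single case to treat, parallel to the case $b=0$ of Proposition \ref{proof-lem4.8}.

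First I would compute $\operatorname{disc} T_L$ for a class $L$ of divisibility $2$ and degree $2d$. Since all such classes form a single $O(\Lambda)$-orbit it is enough to take $L=\bar L_{r,s,l}=2(e+rsf)+l\delta$; then $T_L$ is generated by $U_{e,f}^\perp$ together with $e-rsf$ and $lf+\delta$, whose Gram matrix $\left(\begin{smallmatrix}-2rs & l\\ l & -2\end{smallmatrix}\right)$ has determinant $d$. As $l$ is odd the entries of this matrix are coprime, so $\operatorname{disc} T_L\cong\mathbb Z_d$ is cyclic; moreover $d=4rs-l^2$ with $l$ odd forces $d\equiv 3\bmod 4$, in particular $d$ is odd. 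Choosing the canonical generator $\gamma(1)$ coming from the glue with $\langle L\rangle$ as in \cite[\S 1.5]{N}, a short computation gives $q(\gamma(1),\gamma(1))=-\tfrac{2}{d}$. This yields the analogue of Lemma \ref{action on discr}: writing $\bar\phi(\gamma_Y(1))=a\,\gamma_X(1)$ with $a\in(\mathbb Z_d)^\times$, the condition that $\bar\phi$ be an isometry reads $-\tfrac{2a^2}{d}\equiv-\tfrac{2}{d}\bmod\mathbb Z$, that is $a^2\equiv 1\bmod d$ since $\gcd(2,d)=1$.

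Next I would establish the analogue of Lemma \ref{action of rho r,s,l on discr}. Because $r\mid l$, the reflection $\rho_r$ carries $\bar T_{r,s,l}$ integrally onto $\bar T'_{r,s,l}$, and writing out the dual bases expresses $\bar\rho_r(\gamma'(1))$ as an explicit multiple $a(r,s,l)\,\gamma(1)$ with $a(r,s,l)^2\equiv 1\bmod d$. To realise an arbitrary solution $a$ of $a^2\equiv1\bmod d$, I would use that $(a-1)(a+1)\equiv 0\bmod d$ with $d$ odd produces a coprime factorisation $d=rk$; setting $l=r$ and $s=\tfrac{r+k}{4}$ gives admissible data, since then $\gcd(r,s)=1$, $l$ is odd, $r\mid l$, and $8rs-2l^2=2d$, and the associated reflection realises $a$. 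As in Proposition \ref{proof-lem4.8}, distinct coprime factorisations yield distinct values $a(r,s,l)$, and since their number equals the number $2^{\tau(d)}$ of square roots of unity modulo $d$, every admissible $\bar\phi$ is obtained.

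The main obstacle is this last arithmetic step: one must guarantee that $s=\tfrac{r+k}{4}$ is an integer and that the three side conditions $\gcd(r,s)=1$, $l$ odd and $r\mid l$ hold simultaneously while still covering every square root of unity modulo $d$. Integrality of $s$ is exactly where $d\equiv 3\bmod 4$ enters: from $rk=d\equiv 3\bmod 4$ with $r,k$ odd, one of $r,k$ is $\equiv 1$ and the other $\equiv 3\bmod 4$, forcing $r+k\equiv 0\bmod 4$. The remaining bookkeeping, namely the injectivity of the assignment from coprime factorisations to values $a(r,s,l)$, follows from the same prime-by-prime argument used in the divisibility-$1$ proof.
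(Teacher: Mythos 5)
Your proposal is correct and follows essentially the same route as the paper: reduce $\phi$ to its action on $\operatorname{disc} T_L\cong\mathbb Z_d$ with $d\equiv 3\bmod 4$, show this action is multiplication by some $a$ with $a^2\equiv 1\bmod d$ (the paper's Lemma \ref{aut div 2}), realise each such $a$ via the coprime factorisation $d=rk$ with $l=r$, $s=\tfrac{r+k}{4}$ through the explicit discriminant action of $\rho_r$ (Lemma \ref{div 2 action on discr}), and conclude by the injectivity-plus-counting argument with $2^{\tau(d)}$ square roots of unity. The only (cosmetic) difference is that you compute the discriminant quadratic form directly on $\operatorname{disc} T_L$, whereas the paper phrases the constraint through the glue embedding $\gamma_L:\operatorname{disc} T_L\to\operatorname{disc}\langle L\rangle$; both yield the same congruence $a^2\equiv 1\bmod d$.
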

For the proof we will again work with discriminants. In this case, contrary to the case of divisibility 1, for a class $L$ of divisibility 2 we have $L^2=2d$ with $d\equiv 3 \mod 4$. Moreover,  $\operatorname{disc} \langle L\rangle \simeq \mathbb Z_{2d}\simeq \mathbb Z_2\oplus \mathbb Z_d$ and the discriminant of the transcendental lattice $\operatorname{disc} T_L=\mathbb Z_d$. In that case there is an embedding $\gamma: \operatorname{disc} T_L\to \operatorname{disc} \langle L\rangle $  such that 
 $$\Lambda=\{(p,q)\in \langle L\rangle ^*\oplus T_L^* \subset \Lambda_{\mathbb Q} |\ \gamma_L([q])=-[p] \}.$$ In fact $\gamma_L$ is an isometry between $\operatorname{disc} T_L$ and the subgroup of elements in $\operatorname{disc} \langle L\rangle $ whose order divides $d$.
 
 In this case, an isometry $\phi: T_L\to T_{L'}$ extends to an autoisometry of $\Lambda$ if and only if 
 $\gamma_L\circ \bar{\phi}=\gamma_L'$, where $\operatorname{disc} \langle L\rangle$ is identified with $\operatorname{disc} \langle L'\rangle$ by identifying $[\frac{L}{2d}]=[\frac{L'}{2d}]$. In consequence a map of lattices $\phi: T_L\to T_{L'}$ is determined up to composing on both sides with automorphisms of $\Lambda$ by $\gamma_L (\bar{\phi}( \gamma_{L'}^{-1}(\frac{L'}{d})))$. Note here that $\frac{L'}{d}$ as an element of order $d$ is in the image of $\gamma_{L'}$. 

The following gives a description of possible isometries between the discriminant groups that could arise as isometries induced by some $\phi: T_L\to T_{L'}$.

 \begin{lem}\label{aut div 2}
  If $L$, $L'$ are classes of divisibility 2 and degree $2d$ and $\phi: T_L\to T_{L'}$ is an isometry of the corresponding orthogonal complements. Then $$\gamma_L (\bar{\phi}( \gamma_{L'}^{-1}(\frac{L'}{d})))= a [\frac{L}{d}]$$ where 
  $$a^2\equiv 1 \mod d.$$
 \end{lem}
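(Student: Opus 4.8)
The plan is to reduce the statement to the single fact that the map $\bar\phi$ induced on discriminant groups by the lattice isometry $\phi$ preserves the discriminant form, and then to extract the congruence $a^2\equiv 1 \mod d$ by evaluating that form on the distinguished element $[\frac{L}{d}]$.

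First I would check that the expression makes sense and that $a$ is well defined modulo $d$. By construction $\gamma_{L'}^{-1}(\frac{L'}{d})$ lies in $\operatorname{disc} T_{L'}$, so $\bar\phi(\gamma_{L'}^{-1}(\frac{L'}{d}))$ lies in $\operatorname{disc} T_L$, and applying $\gamma_L$ lands in the image of $\gamma_L$, which is exactly the order-$d$ subgroup $H_L=\langle [\frac{L}{d}]\rangle$ of $\operatorname{disc}\langle L\rangle\simeq\mathbb{Z}_{2d}$. Since $(\frac{L}{d},L)=2\in\mathbb{Z}$ we have $\frac{L}{d}\in\langle L\rangle^*$, and $[\frac{L}{d}]=2[\frac{L}{2d}]$ has order $d$; hence writing the element as $a[\frac{L}{d}]$ determines $a$ uniquely modulo $d$.

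The key point is that the composite $\Phi:=\gamma_L\circ\bar\phi\circ\gamma_{L'}^{-1}\colon H_{L'}\to H_L$ preserves the bilinear discriminant form $q$ valued in $\mathbb{Q}/\mathbb{Z}$. Indeed, $\bar\phi$ is induced by the lattice isometry $\phi$ and is therefore an isometry of $(\operatorname{disc} T_{L'},q_{T_{L'}})$ onto $(\operatorname{disc} T_L,q_{T_L})$, while $\gamma_L$ and $\gamma_{L'}$ are, by construction, isometries of $\operatorname{disc} T_L$, $\operatorname{disc} T_{L'}$ onto $H_L$, $H_{L'}$ (and even if one prefers to regard them as anti-isometries, the two sign contributions coming from $\gamma_L$ and $\gamma_{L'}^{-1}$ cancel, so $\Phi$ preserves $q$ in either convention). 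Under the identification $[\frac{L}{2d}]=[\frac{L'}{2d}]$ one has $H_{L'}=H_L$ and $\frac{L'}{d}=[\frac{L}{d}]$, so $\Phi([\frac{L}{d}])=a[\frac{L}{d}]$.

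Finally I would carry out the evaluation. Computing directly from $(L,L)=2d$ gives $q([\tfrac{L}{d}],[\tfrac{L}{d}])=\frac{(L,L)}{d^2}=\frac{2}{d}\in\mathbb{Q}/\mathbb{Z}$, whereas $q(\Phi[\tfrac{L}{d}],\Phi[\tfrac{L}{d}])=a^2\cdot\frac{2}{d}$. Preservation of $q$ forces $\frac{2(a^2-1)}{d}\equiv 0 \mod \mathbb{Z}$, i.e.\ $d\mid 2(a^2-1)$; since $d\equiv 3 \mod 4$ is odd, this yields $d\mid a^2-1$, that is $a^2\equiv 1 \mod d$. The argument is essentially bookkeeping, so no serious obstacle arises; the only place requiring care is keeping track of the functoriality and sign conventions for the gluing maps $\gamma_L,\gamma_{L'}$ and the induced map $\bar\phi$, and verifying that the signs cancel so that $\Phi$ genuinely preserves $q$.
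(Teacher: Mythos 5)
Your proposal is correct and follows essentially the same route as the paper: both arguments reduce the statement to the fact that the composite $\gamma_L\circ\bar{\phi}\circ\gamma_{L'}^{-1}$ respects the discriminant form (the anti-isometry signs of $\gamma_L$ and $\gamma_{L'}$ cancelling), and then evaluate on the order-$d$ generator $[\frac{L}{d}]$ of square $\frac{2}{d}$. The only harmless divergence is that you use the bilinear form valued in $\mathbb{Q}/\mathbb{Z}$, which gives only $d\mid 2(a^2-1)$ and requires you to invoke the oddness of $d$ (valid here, since divisibility $2$ forces $d\equiv 3 \bmod 4$), whereas the paper works with the quadratic form valued in $\mathbb{Q}/2\mathbb{Z}$ and obtains $a^2\equiv 1 \bmod d$ directly.
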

 \begin{proof} As we have observed above, the image of $\gamma_L$ is equal to the locus of elements of order dividing $d$ in  $\operatorname{disc}(\langle L\rangle)$, i.e. is generated by $[\frac{L}{d}]$.
     Recall now that $\gamma_L$ and $\gamma_{L'}$ are antiisometries with respect to the lattice structures on $\operatorname{disc}(\langle L\rangle)$ which implies that 
     $$a^2 \frac{2d}{d^2}= \frac{2d}{d^2}  \in \mathbb Q/2\mathbb Z.$$ It follows that $\frac{2a^2-2}{d}\in 2\mathbb Z$ which implies $a^2\equiv 1 \mod d$.
 \end{proof}

It remains to compare the isometries from Lemma \ref{aut div 2} with isometries arising from the restrictions of maps $\rho_r$ to $\bar T_{r,s,l}$. For that we need to understand the latter.

\begin{lem}\label{div 2 action on discr} For $r,s\in \mathbb Z$ coprime, with $r$ odd, $n, m\in \mathbb Z$ such that $rn+2sm=1$ and $l$ odd such that $r|l$, the map $\phi=\rho_r|_{\bar T'_{r,s,l}}$ is an integral isometry $\bar T'_{r,s,l}\to \bar T_{r,s,l}$ such that 
$$\gamma_{\bar{L}'_{r,s,l}} (\bar{\phi}( \gamma_{\bar{L}_{r,s,l}}^{-1}(\frac{\bar{L}_{r,s,l}}{d})))= (1-\frac{dm}{r})[\frac{\bar{L}'_{r,s,l}}{d}].$$

\end{lem}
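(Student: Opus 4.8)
The plan is to transport the element $\frac{\bar L_{r,s,l}}{d}$ through the chain $\gamma_{\bar L_{r,s,l}}^{-1}$, then $\bar\phi$, then $\gamma_{\bar L'_{r,s,l}}$, working throughout in the orthogonal decomposition $\Lambda=U_{e,f}\oplus W\oplus\langle\delta\rangle$, where $W=U_{e,f}^{\perp}\subset\Lambda_{K3}$ is unimodular. First I would record the behaviour of $\rho_r$ on the relevant classes. Since $\rho_r$ is a reflection it is an involution, and a direct computation gives $\rho_r(\bar L_{r,s,l})=\bar L'_{r,s,l}$; hence $\rho_r$ interchanges the two orthogonal complements and $\phi=\rho_r|_{\bar T'_{r,s,l}}$ is an isometry $\bar T'_{r,s,l}\to\bar T_{r,s,l}$. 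Integrality is exactly where $r\mid l$ enters: on the generators $D_1=e-rsf$ and $D_2=lf+\delta$ of $\bar T_{r,s,l}$ one finds $\rho_r(D_1)=rf-se\in\Lambda$ and $\rho_r(D_2)=\tfrac{l}{r}e+\delta\in\Lambda$. Moreover, involutivity gives $\bar\phi([x])=[\rho_r(x)]$ for $x\in\bar T_{r,s,l}^{*}$, which removes any need to invert $\phi$ separately.

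Next I would set up the discriminant. The sublattice $\bar T_{r,s,l}$ is generated over the unimodular $W$ by $D_1,D_2$ with Gram matrix $\left(\begin{smallmatrix}-2rs&l\\ l&-2\end{smallmatrix}\right)$ of determinant $d=4rs-l^2$, and since $l$ is odd the discriminant group is cyclic $\mathbb Z_d$. To compute $\gamma_{\bar L_{r,s,l}}^{-1}(\tfrac{\bar L_{r,s,l}}{d})$, I decompose the lattice vector $-f$ into its $\langle\bar L_{r,s,l}\rangle$- and $\bar T_{r,s,l}$-components: one gets $p=-\tfrac{\bar L_{r,s,l}}{d}$ and $q=-f+\tfrac{\bar L_{r,s,l}}{d}=\tfrac{2}{d}D_1+\tfrac{l}{d}D_2$, so by the defining property $\gamma_{\bar L_{r,s,l}}([q])=-[p]=[\tfrac{\bar L_{r,s,l}}{d}]$ and thus $[q]=\gamma_{\bar L_{r,s,l}}^{-1}(\tfrac{\bar L_{r,s,l}}{d})$. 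Applying $\bar\phi=[\rho_r(\cdot)]$ then yields $q':=\rho_r(q)=\tfrac{2rs-d}{dr}e+\tfrac{2r}{d}f+\tfrac{l}{d}\delta\in\bar T'_{r,s,l}\otimes\mathbb Q$, an element orthogonal to $\bar L'_{r,s,l}=\rho_r(\bar L_{r,s,l})$ as required.

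Finally I would evaluate $\gamma_{\bar L'_{r,s,l}}([q'])$ by finding $p'=\tfrac{c}{2d}\bar L'_{r,s,l}$ with $p'+q'\in\Lambda$, since then $\gamma_{\bar L'_{r,s,l}}([q'])=-[p']$. Setting $k=c+2$ and writing $d=rd_0$, $l=rl_0$ (so $\gcd(r,d_0)=1$ and $d_0,l_0$ odd), the integrality of the $f$-, $\delta$- and $e$-coefficients of $p'+q'$ becomes, in turn, $d_0\mid k$, then $2\mid(k/d_0)$, and finally $2s\,(k/2d_0)\equiv 1\ (\mathrm{mod}\ r)$. The relation $rn+2sm=1$ solves this last congruence with $k/2d_0=m$, forcing $k=2d_0m$ and hence $c=2d_0m-2$; reading $-[p']=(1-d_0m)[\tfrac{\bar L'_{r,s,l}}{d}]=(1-\tfrac{dm}{r})[\tfrac{\bar L'_{r,s,l}}{d}]$ gives the claim. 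The main obstacle is precisely this last step: disentangling the three simultaneous integrality conditions on $c$ and recognising that they collapse to the statement that $k/2d_0$ is the inverse of $2s$ modulo $r$. This is where all the hypotheses ($r,l$ odd, $r\mid l$, $\gcd(r,s)=1$) are genuinely used, and care is needed to check that the resulting class is well defined modulo $\langle\bar L'_{r,s,l}\rangle$ (replacing $k$ by $k+2d$ leaves $[p']$ unchanged in $\operatorname{disc}\langle\bar L'_{r,s,l}\rangle$).
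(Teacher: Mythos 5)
Your proposal is correct and follows essentially the same route as the paper's proof: the same generators $B_1=e-rsf$, $B_2=lf+\delta$ of the rank-two part of $\bar T_{r,s,l}$, the same discriminant generator $q=\frac{2B_1+lB_2}{d}=\gamma_{\bar L_{r,s,l}}^{-1}\!\left(\frac{\bar L_{r,s,l}}{d}\right)$ (the paper's $C$), and the same image $q'=\rho_r(q)$ handled via involutivity of the reflection. The only cosmetic difference is the last step: the paper verifies directly that $C'-\left(1-\frac{dm}{r}\right)\frac{\bar L'_{r,s,l}}{d}\in\Lambda$, whereas you solve the integrality conditions for the unknown coefficient of $p'$, which is the same arithmetic read in reverse and has the minor bonus of making uniqueness of the class modulo $2d$ explicit.
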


\begin{proof}Let us denote 

$$B_1:=e-rsf \text{ and } B_2=lf+\delta.$$
Then
$$\bar T_{r,s,l}= \langle B_1, B_2 \rangle,$$
and it has $M$ as the intersection matrix
$$M=\left(\begin{array}{cc}
   -2rs  & l \\
  l   & -2
\end{array}\right),$$
with determinant $4rs-l^2= \bar{L}_{r,s,l}^2=2d$ 
and $$\operatorname{disc}(\bar T_{r,s,l})=\langle \frac{2B_1+lB_2}{d}\rangle.$$
Let now $C:=\frac{2B_1+lB_2}{d}=\frac{2e+(l^2-2rs)f+l\delta}{d}$, then 

$$\gamma_{\bar{L}_{r,s,l}}(C)= \frac{\bar{L}_{r,s,l}}{d}. $$
Indeed, $$C-\frac{\bar{L}_{r,s,l}}{d}=\frac{2e+(l^2-2rs)f+l\delta}{d}-\frac{2e+2rsf+l\delta}{d}=f\in \Lambda.$$

Similarly, if we denote $B'_1=\rho_r(B_1)=-se+rf$ and $B'_2=\rho_r(B_2)=\frac{l}{r}e+\delta $, then 
$$\bar T'_{r,s,l}= \langle B'_1, B'_2 \rangle,$$ with the same matrix $M$ as intersection matrix. Furthermore,  if we denote $C':=\bar{\phi}(C)$.
$$\operatorname{disc}(\bar T'_{r,s,l})=\langle \frac{2B'_1+lB'_2}{d}\rangle=\langle C'\rangle.$$

We now have
$$C'-(1-\frac{dm}{r})  \frac{\bar{L}'_{r,s,l}}{d}=\frac{(\frac{l^2}{r}-2s)e+2rf+l\delta }{d}-\left(1-\frac{dm}{r}\right)\frac{2se+2rf+l\delta}{d}=\frac{2sm-1}{r} e + 2mf+ m\delta.$$

It follows that $\gamma_{\bar{L}'_{r,s,l}}(C')=(1-\frac{dm}{r}) \frac{\bar{L}'_{r,s,l}}{d}$.   
\end{proof}

\begin{proof}[Proof of Proposition \ref{proof-div2}] Recall that isometries between transcendental lattices are determined up to autoisometries of $\Lambda$ by their action on the discriminant. Hence we just need to find $r,s,l$ such that $\rho_{r}: \bar{T}_{r,s,l}\to \bar{T}'_{r,s,l}$ acts on the discriminant groups in the same way as $\varphi$.
Note that $a$ such that $$a^2\equiv 1 \mod d$$ for odd $d$ induces a decomposition $d=rk$ such that $r$ and $k$ are coprime and both odd.   Indeed, $$\gcd(a-1,a+1)\leq 2$$ so every prime power dividing $d$ must divide either $a-1$ or $a+1$ thus if we set $$r=\gcd(a-1,d),\  
k=\gcd(a+1,d)$$ we get a decomposition $d=rk$ as described and thus when $$d\equiv 3 \mod 4$$ we have $r+k\equiv 0 \mod 4$. Let us now set $l=r$ and $s=\frac{k+r}{4}$. Then 
$d=4rs-l^2$, $\gcd(r,s)=1$, $l$ is odd  and $r|l$. In that case in Lemma \ref{div 2 action on discr}, the associated action on the discriminant is given by multiplication by $$1-\frac{dm}{r}=1-(4s-r)m=1-km $$ for $m$ such that  $2sm-1$ is divisible by $r$. It remains to observe in the same way as in the proof of Lemma \ref{action of rho r,s,l on discr} that different decompositions $d=r_ik_i$ lead to numbers $k_im_i$ which are different modulo $d$. Indeed, for two different decompositions (up to changing their order) we find a prime $\alpha$ which divides $r_1$ and $k_2$ but neither $r_2$ nor $k_1$. In that case $\alpha$ divides $k_2m_2$ but not $k_1m_1$. To see the latter we observe that $\alpha$ dividing $r_1$ divides $2s_1m_1-1$ and hence cannot divide $m_1$. Since it also does not divide $k_1$ and is prime we conclude that it does not divide $k_1m_1$. It follows that different decompositions $d=rk$ lead to different actions on the discriminant. Since the number of possible actions on the discriminant up to composing with autoisometries of $\Lambda$ is equal to the number of decompositions of $d$ into a product of coprime numbers we conclude the proof.
\end{proof}

\end{subsection}
    
\section{Derived equivalent $K3^{[2]}$-type manifolds of Picard rank $1$}\label{Sec4}
In this section we prove the main result of the paper, which includes Theorems \ref{main1} and \ref{main2} and is reformulated as follows.
\begin{thm}\label{MAIN}
    Let $X$ be a \HK\ fourfold of $K3^{[2]}$-type of Picard rank $1$ with $\operatorname{Pic}(X)=\langle L_X\rangle$. Let $Y$ another \HK\ fourfold of $K3^{[2]}$-type  such that the transcendental lattices  $T_X$ and $T_Y$ are Hodge isometric. Then  we have two possibilities:
 \begin{enumerate}

 \item  The class $L_X$ is of divisibility 1 and $d\equiv 1 \ \mod\ 4$ or $8| d$ and $X$ and $Y$ are derived equivalent, i.e.~$D^b(X)=D^b(Y)$. In this case,
         the number of Fourier-Mukai partners of $X$ is $2^{\tau (d)}$
        where $\tau(d)$ is the number of different prime divisors of $d$. 

\item The class $L_X$ is of divisibility 2 and $X$ and $Y$ are derived equivalent. The number of Fourier-Mukai partners of $X$ is then $2^{\tau (d)}$.
 \item The class $L_X$ is of any other type and the varieties $X$ and $Y$ are twisted derived equivalent with twist $\frac{\delta}{2}$,  i.e.~$D^b(X,[\frac{\delta_X}{2}])=D^b(Y,[\frac{\delta_Y}{2}])$.
   
        \end{enumerate}
\end{thm}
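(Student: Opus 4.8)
The plan is to derive Theorem \ref{MAIN} by combining two ingredients that are already in place: the decomposition of an arbitrary isometry of transcendental lattices into reflections of $r$-cyclic type (Propositions \ref{proof-lem4.8} and \ref{proof-div2}), and the identification of which such reflections yield untwisted, respectively twisted, derived equivalences (Lemmas \ref{proof-div1} and \ref{proof-lem-div2}, fed through Corollary \ref{cor-B}). First I would reduce the geometric statement to the lattice problem. Given the Hodge isometry $\phi\colon T_X\to T_Y$, extend it to a rational Hodge isometry $H^2(X,\QQ)\to H^2(Y,\QQ)$ by sending $L_X\mapsto L_Y$ on the rank-one orthogonal complement, which is legitimate since $L_X^2=L_Y^2=2d$. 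Fixing markings $\phi_X,\phi_Y$ as in Section \ref{section r cyclic}, this becomes an isometry between the sublattices $\phi_X(T_X),\phi_Y(T_Y)\subset\Lambda$, and the task is to express it via the explicit reflections $\rho_r$.

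Next I would realise each piece geometrically. By Proposition \ref{proof-lem4.8} (divisibility $1$) or Proposition \ref{proof-div2} (divisibility $2$), $\phi$ is equivalent, up to composing on both sides with restrictions of autoisometries of $\Lambda$, to a composition of maps $\rho_{r_i}\colon T_{r_i,s_i,l_i}\to T'_{r_i,s_i,l_i}$ --- a single such map, except when $d\equiv 0,1,5\ \mod\ 8$ (divisibility $1$), where two are needed. Each $\rho_{r_i}$ is of $r$-cyclic type, so by Corollary \ref{cor-B} it is realised by a (possibly twisted) Fourier--Mukai kernel between the corresponding marked fourfolds, obtained by deforming a BKR kernel along a twistor path as in Subsection \ref{deforming BKR}. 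The autoisometries of $\Lambda$ one composes with only alter the markings, not the isomorphism classes, so they leave the derived categories unaffected; the intermediate fourfold realising the middle lattice exists by surjectivity of the period map. Now I read off the twist case by case. In the divisibility-$2$ case Lemma \ref{proof-lem-div2} gives a genuine equivalence $D^b(X)=D^b(Y)$ (case (2)). In the divisibility-$1$ case, Lemma \ref{proof-div1} makes the twist trivial when $\frac{2l}{r}$ is odd and equal to $[\frac{\delta}{2}]$ when it is even. For $d\equiv 0,1,5\ \mod\ 8$, i.e.\ $d\equiv 1\ \mod\ 4$ or $8\mid d$, both pieces have $\frac{2l_i}{r_i}$ odd, so they are untwisted and compose to $D^b(X)=D^b(X_1)=D^b(Y)$ (case (1)); for $d\equiv 2,3,4,6,7\ \mod\ 8$ the single piece has $\frac{2l}{r}$ even, giving the twisted equivalence $D^b(X,[\frac{\delta_X}{2}])=D^b(Y,[\frac{\delta_Y}{2}])$ (case (3)). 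That a twisted piece cannot be straightened to an untwisted one by a further composition (cf.\ \cite[Remark 1.3]{H}) is exactly why these congruence classes are genuinely distinct.

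It remains to count Fourier--Mukai partners in the two untwisted cases (1) and (2). By the cited converse \cite[Corollary 9.3]{B}, derived equivalence forces a Hodge isometry of transcendental lattices, so the partners of $X$ are precisely the isomorphism classes of fourfolds $Y$ with $T_Y$ Hodge-isometric to $T_X$. By the global Torelli theorem together with Nikulin's gluing theory these are classified by the induced maps on the discriminant group $\operatorname{disc}T_X$, modulo the images of the monodromy group and of $O(T_X)$. The number of admissible discriminant maps was already pinned down in the proofs of Propositions \ref{proof-lem4.8} and \ref{proof-div2}: the admissible values, parametrised by $a$ with $a^2\equiv 1$ modulo $4d$ (respectively modulo $d$ in divisibility $2$), are in bijection with the coprime factorizations $d=rs$ computed in Lemmas \ref{action on discr} and \ref{aut div 2}, of which there are $2^{\tau(d)}$. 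Identifying classes that differ by a Hodge isometry of $T_X$ then yields the stated count.

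I expect the counting to be the main obstacle. Steps one through three are essentially bookkeeping once the decomposition results and Corollary \ref{cor-B} are available; but converting the count of discriminant actions into a count of isomorphism classes of $Y$ requires verifying that distinct coprime factorizations really produce non-isomorphic partners (no collapsing under the monodromy action) and, conversely, that every partner arises this way. Handling the quotient by $\pm\operatorname{id}$ and by the monodromy correctly is the delicate point, and it is precisely there that the exact power of $2$ is fixed.
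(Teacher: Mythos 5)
Your proposal is correct and follows essentially the same route as the paper's own proof: extend $\phi$ to a rational Hodge isometry sending $L_X\mapsto L_Y$, decompose it via Propositions \ref{proof-lem4.8} and \ref{proof-div2} into reflections of $r$-cyclic type realised through an intermediate fourfold $Z$ (whose existence follows from surjectivity of the period map/Torelli), read off the twists from Lemmas \ref{proof-div1} and \ref{proof-lem-div2}, and count partners via \cite[Corollary 9.3]{B}, Torelli, and the solutions of $a^2\equiv 1 \bmod 4d$ (resp.\ $\bmod\ d$) matching coprime factorizations of $d$. The ``delicate point'' you flag at the end --- that distinct coprime factorizations give distinct discriminant actions --- is exactly where the paper does its work too, namely in the distinctness arguments inside the proofs of Propositions \ref{proof-lem4.8} and \ref{proof-div2}, so your outline contains no genuine gap.
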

    
The proof will be given in Subsection \ref{proof}. Note that the theorem could be naturally generalised to higher dimension for $K3^{[n]}$-type manifolds and we hope to work on it in the future.
This proof is based on the following approach. For a pair of varieties $X$ and $Y$ as in the assumptions we will construct a Fourier-Mukai kernel defining their derived equivalence as an appropriate convolution obtained by means introduced in Section \ref{section isometries of transcendental} of special Fourier-Mukai kernels that are constructed and studied in Section \ref{section r cyclic}. For clarity of the argument let us first present the proof of derived equivalence of two dual double EPW sextics.

\subsection{A baby case: dual double EPW sextics} In order to illustrate the construction let us first describe derived equivalences between dual EPW sextics.
The Picard lattice of a $K3^{[2]}$-type \HK{} manifold is the following $$H^2(X_i,\mathbb{Z})\simeq U^3\oplus E_8(-1)^2\oplus (-2)=:\Lambda.$$ Denote by $e,f$ the generators of the first $U$ and $\delta$ that of the last $(-2)$.

Let $$\psi \colon \Lambda_{\QQ}\to \Lambda_{\QQ}$$ be a fixed isometry.
We consider as in \cite[\S 5]{Mar1} the moduli space 
$\mathcal{M}_{\psi}$ that
parametrises isomorphism classes of quadruples
($X_t,\eta_t,X_t',\eta_t')$ of deformation equivalent marked pairs $(X_t,\eta_t)$, where $\eta_t\colon H^2(X_t,\mathbb{Z}) \to \Lambda$ and $\eta_t'\colon H^2(X_t',\mathbb{Z}) \to \Lambda$ are
isometries, such that 
$$(\eta_t')^{-1}\psi \eta_t \colon H^2(X_t,\mathbb{Q})\to H^2(X_t',\mathbb{Q})$$
is a Hodge isometry which maps some K\"ahler class to a K\"ahler class.  
Markman proves that if we find a Fourier--Mukai kernel that is locally free and compatible in the sense of \cite[Definition 5.20]{Mar1} with $\psi$ then this bundle deforms to a twisted vector bundle for any pair in a given connected component $\M^0_{\psi}$ of $\M_{\psi}$ by \cite[Proposition 5.21]{Mar1}.
We study this moduli space knowing from \cite[Lemma 5.14]{Mar1} that the natural forgetful maps from $\M^0_{\psi}$ to the moduli space of marked \HK{} fourfolds of $K3^{[2]}$-type is surjective (for a given choice of orientation). 

As a warm up let us consider the simplest case. Let $$\psi:\Lambda_{\mathbb Q}\to \Lambda_{\mathbb Q}$$ be the isometry defined as follows. 
It interchanges $e-f$ and $\delta$ and is an identity elsewhere.
This isometry is in fact the reflection $\rho_{\delta-e+f}$ through the $-4$ class $\delta-e+f$.
As observed in \cite[Section 4.2]{O1} the isometry induces on the moduli space of marked $K3^{[2]}$-type manifolds  of degree $2$ (i.e.~double EPW sextics) with polarisation $e+f$ an involution corresponding to taking dual double EPW sextics. Observe furthermore, that from Lemma \cite[Lemma 2.4]{Mar1} there exists automorphisms $\alpha_1$, $\alpha_2$ of $\Lambda$ such that $\alpha_1\circ \rho_{\delta-e+f}\circ \alpha_2=\rho_{e+2f}$.

\begin{prop} 
Two dual double EPW sextics $X$ and $ X'$ are
derived equivalent, i.e. $D(X)\simeq D(X')$. 
\end{prop}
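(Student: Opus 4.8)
The plan is to realize the Hodge isometry relating two dual double EPW sextics as a rational Hodge isometry of $r$-cyclic type, so that Corollary \ref{cor-B} produces a twisted derived equivalence, and then to check that the resulting Brauer twist is trivial on both sides. Recall from the discussion above that the isometry $\psi$ exchanging $e-f$ and $\delta$ is the reflection $\rho_{\delta-e+f}$, and that by \cite[Lemma 2.4]{Mar1} there are autoisometries $\alpha_1,\alpha_2$ of $\Lambda$ with $\alpha_1\circ\rho_{\delta-e+f}\circ\alpha_2=\rho_{e+2f}$. The first step is therefore to identify $\rho_{e+2f}$ with the reflection $\rho_r$ of $r$-cyclic type for a suitable value of $r$: since $e+2f$ has square $(e+2f)^2=4$, this is the class $e-rf$ with $r=-2$, equivalently we arrange the sublattice $U_{e,f}$ and the parameter so that the reflecting vector has square $-2r$ with $r$ determining the rank $2r^2$ of the kernel. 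Concretely, I would recast the situation in the normalized form of Subsection \ref{deforming BKR}, writing $\Pic(X)=\langle L_{r,s,l}\rangle$ and $\Pic(X')=\langle L'_{r,s,l}\rangle$ for the appropriate $(r,s,l)$ describing a polarization of degree $2$, and verifying that $\rho_r$ composed with the markings induces the required Hodge isometry between the transcendental lattices $T_X$ and $T_{X'}$.

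The second step is to apply Corollary \ref{cor-B}, which gives a twisted derived equivalence $D(X,\beta)\simeq D(X',\beta')$ of $r$-cyclic type, with the Brauer classes $\beta,\beta'$ computable from the deformed BKR kernel via Theorem \ref{Cardelaru}. The decisive computation is then to show that these twists are trivial. By the Chern class formula of Lemma \ref{proof-div1}, the $B$-lifts of $\beta$ and $\beta'$ are $(\tfrac{l}{r}-\tfrac12)\delta_X$ and $(\tfrac{l}{r}+\tfrac12)\delta_Y$, and these give trivial Brauer classes precisely when $\tfrac{2l}{r}$ is odd. So the crux is to check that the specific $(r,s,l)$ realizing the dual EPW involution (degree $2$, divisibility $1$) falls into case (1) of Lemma \ref{proof-div1} rather than case (2): one must verify $\tfrac{2l}{r}\in 2\ZZ+1$. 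For degree $2$ we have $d=1$, and by Proposition \ref{proof-lem4.8} with $d\equiv 1\bmod 8$ the isometry lands in the case where each factor is realized with $\tfrac{2l}{r}$ odd, so the twist vanishes.

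The main obstacle I expect is precisely this parity bookkeeping: one must confirm that the normalization $\alpha_1\rho_{\delta-e+f}\alpha_2=\rho_{e+2f}$ is compatible with the constraint $r\mid 2l$ and produces odd $\tfrac{2l}{r}$, rather than inadvertently shifting the $B$-lift by an integral multiple of $\delta$ that would flip the parity. Since the $B$-lift is only well-defined modulo $H^2(X,\ZZ)$ (the ambiguity noted in Section \ref{brauerz}), and $\delta$ has divisibility $2$, one has to track the $\delta$-coefficient modulo $1$ carefully; a half-integer coefficient is what distinguishes the nontrivial class $[\tfrac{\delta_X}{2}]$ from the trivial one. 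Once this is settled, the two autoisometries $\alpha_1,\alpha_2$ only modify the equivalence by twisting with line bundles and by the identification of markings, which do not affect the (trivial) Brauer class, and we conclude $D(X)\simeq D(X')$ as an honest, untwisted derived equivalence.
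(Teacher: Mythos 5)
Your proposal is correct and takes essentially the same route as the paper: the paper's proof is precisely the explicit instance of your plan, realizing the duality as the $2$-cyclic reflection via the rank $8$ BKR kernel on $M_{(2,h,5)}(S_{20})^{[2]}\times S_{20}^{[2]}$ --- i.e.\ $(r,s,l)=(2,5,3)$, so $L_{2,5,3}=e+10f-3\delta$ has degree $2$ and $\tfrac{2l}{r}=3$ is odd --- deforming along twistor paths (Theorems \ref{Markman def} and \ref{main markman}) and killing the twist by the same Theorem \ref{Cardelaru} computation that proves Lemma \ref{proof-div1}. Two cosmetic slips only: the reflecting vector has square $\mp 4$ so the relevant parameter is $r=2$ (kernel rank $2r^2=8$), not ``$r=-2$''; and since the duality acts on the discriminant as a $(0,1)$-type map, it is realized by a \emph{single} odd-parity reflection rather than by ``each factor'' of the two-step decomposition of Proposition \ref{proof-lem4.8}(2), whose composition would have trivial $\delta$-component.
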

\begin{proof}
    By \cite[\S 5.5]{Mar1} any component of $\mathcal{M}_{\psi}$ induces pairs that give all possible
    Hodge isometries induced by $\psi$.
It is enough to construct one compatible locally free Fourier-Mukai kernel for a special pair of dual double EPW sextics.

Let $(S_{20},h)$ be a general degree $20$ K3 surface.
On $S_{20}^{[2]}$ we have a natural degree $2$ polarisation $\mu(h)-3\delta$.
The moduli space $M_{20}=M_{(2,h,5)}(S_{20})$ is also a degree $20$ K3 surface that is 
derived equivalent to $S_{20}$ but not isomorphic to it. In fact, there exists a locally free rank $2$ Fourier-Mukai kernel on $M_{20}\times S_{20}$.
From \cite{P} (see \cite[\S 9.2]{T}) this sheaf, induces a derived equivalence induced by a locally free rank $8$ vector bundle $\mathcal U$ on $M_{20}^{[2]}\times S_{20}^{[2]}$. 
By \cite[Corollary 7.3]{Mar1} this bundle, induces a Hodge isometry $$\psi \colon H^2(M_{20}^{[2]},\QQ)\to H^2( S_{20}^{[2]}, \QQ)$$ that is of 4-cyclic type, i.e.~there exist markings $$\eta_M: H^2(M_{20}, \mathbb Z)\to \Lambda,\ \text{and}\ \eta_S: H^2(S_{20}, \mathbb Z)\to \Lambda$$ such that  $\eta_S^{-1}\circ \rho_{e+2f}\circ \eta_M=\psi$.
Moreover, if we denote by $h_M$, $\delta_M$, $h_S$, $\delta_S$ the natural generators of $\Pic M_{20}^{[2]}$ and $\Pic S_{20}^{[2]}$ respectively then we can assume $$\eta_M(h_M)=e+10f, \ \eta_M(\delta_M)=\delta\ \text{and}\ \eta_S(h_S)=2e+5f,\ \eta_S(\delta_S)=\delta.$$

From Theorem \ref{Markman def}, by deforming $\U$ along twistor paths there exists a possibly twisted vector bundle on the product $X_t\times X_t'$ for each pair $(X_t,X_t')$ in $\M_{\psi}$. In order to conclude by Theorem \ref{main markman} we need to find the twist of a deformation when $X_t$ and $X_t'$ are dual EPW sextics.
From \cite[Lemma 11.1]{Mar2} we have in general $$\frac{c_1(\mathcal U)}8=\left(\frac{h_M}{2}-\frac{\delta_M}{2},\frac{h_S}{2}-\frac{\delta_S}{2}\right).$$

Note that a general twistor path after taking out one point in each twistor line is contractible. Then the cohomolgy groups $X_t\times X_t'$ along the contractible subset of the twistor path can be identified. From \cite[Theorem  4.1]{Ca} the topological Brauer class of the twist of the general deformation of the Fourier-Mukai kernel along that open subset of the twistor path is $$\left(\frac{h_M}{2}-\frac{\delta_M}{2},\frac{h_S}{2}-\frac{\delta_S}{2}\right).$$ Since every double EPW sextic $X$ polarised by $L$ can be marked by $\eta:H^2(X,\mathbb Z) \to \Lambda$ in such a way that $\eta(L)=e+10f-3\delta$, it  appears as $X_t$ in $M_{\psi}$ in such a way that $X'_t$ is the dual EPW sextic. Then the class $(\frac{h_M}{2}-\frac{\delta_M}{2},\frac{h_S}{2}-\frac{\delta_S}{2})$ will be in $$H^2(X_t \times X_t',\mathbb{Z})+\frac{1}{2}\Pic(X_t\times X'_t)$$ hence will represent the trivial Brauer class and define a derived equivalence.
Finally, by \cite[Proposition 1.2]{MMY}, we know that $M^{[2]}_{20}$ and $S_{20}^{[2]}$ are not birational (thus the isometry of the transcendental lattices is nontrivial).
\end{proof}

\subsection{Proof of Theorem \ref{MAIN}}\label{proof}
We can now pass to the proof of Theorem \ref{MAIN} following the strategy outlined at the beginning of the section and performed explicitly in the case of dual EPW sextics. The main ingredients of the proof in the general case have already been introduced in Sections \ref{section r cyclic} and \ref{section isometries of transcendental}. 

Take $X$, $Y$ be two \HK{} fourfolds as in the assumptions and let $\phi: T_X \to T_Y$ be a Hodge isometry. Let us extend $\phi$ to a map $$\bar{\phi}: H^2(X,\mathbb Q)\to H^2(Y,\mathbb Q)$$ by setting $\bar{\phi}(L_X)=L_Y$ and observe that $\bar{\phi}$ is a rational Hodge isometry. 

Let us first assume that we are in the setting of item (1) of Theorem \ref{MAIN}, i.e. $L_X$ is of divisibility 1 and degree $2d$, where $$d\equiv 0, 1 \text{ or } 5\ \mod\ 8 .$$ Then by Proposition \ref{proof-lem4.8} item (2) we can decompose 
$$\bar{\phi}= \eta_Y\circ \phi_1 \circ \phi_0\circ \eta_X^{-1},$$
where $\phi_0$ and $\phi_1$ are equivalent up to composing on both sides by automorphisms of $\Lambda$ with reflections $\rho_{r_0}$ and $\rho_{r_1}$. It follows by Torelli theorem that there exists a variety $Z$ of Picard number 1 polarized by $L_Z$ and markings 
$$\eta_{Z,0}: H^2(Z,\mathbb Z) \to \Lambda,$$
$$\eta_{Z,1}: H^2(Z,\mathbb Z) \to \Lambda,$$
$$\eta_X: H^2(X,\mathbb Z)\to \Lambda,$$
$$\eta_Y: H^2(Y,\mathbb Z)\to \Lambda.$$

such that for some triples $(r_i,s_i,l_i)$ with $r_i$, $s_i$ coprime $\frac{2l_i}{r_i}$ odd, the following hold:
\begin{enumerate}
    \item $\eta_{Z,0}(L_Z)=L_{r_0,s_0,l_0}$, $\eta_Y(L_Y)=L'_{r_0,s_0,l_0}$,
    \item  $\eta_{Z,1}(L_Z)=L'_{r_1,s_1,l_1}$ and $\eta_X(L_X)=L_{r_1,s_1,l_1}$,
    \item $\eta_Y^{-1} \circ \rho_{r_0} \circ \eta_{Z,0}$ and $\eta_{Z,1}^{-1} \circ \rho_{r_1} \circ \eta_X$ are rational  Hodge isometries.
\end{enumerate}
It then follows by Lemma \ref{proof-div1} item (1) that we have derived equivalences $D^b(Y)=D^b(Z)$ and $D^b(X)=D^b(Z)$. We conclude that $X$ and $Y$ are derived equivalent.



To compute the number of Fourier-Mukai partners for $X$ as in the considered case (1) of Theorem \ref{MAIN} observe that the just proven condition combined with \cite[Corollary 9.3]{B} implies that $Y$ is a Fourier-Mukai partner of $X$ if and only if $T_Y$ is Hodge isometric to $T_X$. Moreover, by Torelli theorem  if the Hodge isometry between $T_X$ and $T_Y$ extends to a Hodge isometry between $H^2(X,\mathbb Z)$ and $H^2(Y,\mathbb Z)$ then $X$ and $Y$ are isomorphic. It follows that the number of Fourier-Mukai partners of $X$ is equal to the number of automorphisms of the lattice $T_X$ up to restrictions of automorphisms of  $H^2(X,\mathbb Z)$ preserving $T_X$. These by  Proposition \ref{proof-lem4.8} and the discussion in Section \ref{section isometries of transcendental} are in one to one correspondence with automorphisms of the discriminant preserving the induced quadratic form. The latter correspond to the number of solutions of the equation $$a^2\equiv 1\ \mod\ 4d$$ for $a\in \{0,\dots,2d-1\}$, which is known to be $2^{\tau(d)}$ as these solutions correspond to decompositions of $d$ into a product of coprime numbers.

The remaining cases are done in a completely analogous way using other items of Proposition \ref{proof-lem4.8} or \ref{proof-div2} together with Lemma \ref{proof-div1} or \ref{proof-lem-div2}. For the number of Fourier-Mukai partners in the case of divisibility 2 the analogous argument leads to the number of solutions of the equation $$a^2\equiv 1\ \mod d\ \ \text{for}\ a\in \{0,\dots,d-1\}.$$ Since whenever $L_X$ is of divisibility 2 we have $d$ odd, this number of solutions is again $2^{\tau(d)}$.

\section{Twisted derived equivalences of abelian fibrations}\label{EPW4}
In order to prove twisted derived equivalences of \HK{} fourfolds with Picard rank $2$ we have to study Fourier-Mukai kernels of rank $0$.
They are more difficult to treat because we cannot use the results from \cite{Mar1} and deform them as hyperholomorphic bundles.
Indeed, consider a pair of \HK{} fourfolds with Picard lattices $U(2)$ and $2\oplus (-2)$ having the $-2$ class of divisibility $2$ in the transcendental lattice.
Then those manifolds have isomorphic transcendental lattices with discriminant $\ZZ_2^3$. The corresponding automorphism of the discriminant is given by the matrix
$$\left( \begin{array}{ccc}
     1&1& 0 \\
     0&1&1  \\
     0&0&1
\end{array}\right).
$$
We cannot describe this automorphism as a specialisation of a reflection so we cannot construct the derived equivalence as a specialisation of an equivalence between \HK{} fourfolds of Picard rank $1$. 
Let us first introduce some useful tools.

\subsection{Twisted fibrations}\label{fibration}

Let us explain how to construct a moduli space of twisted sheaves on a K3 surface $S$ as a twistor over a relative Picard scheme of a family of curves. We consider the case of degree $2$ surfaces for clarity.
 Let $p\colon \C\to \PP^2$ be the universal family of genus $2$ curves on $S$.
 Recall that the relative Jacobian fibration $X_0=\operatorname{Pic}_0(\C) \to \PP^2$ is an abelian group scheme and admits a structure of a Lagrangian fibration.
 As observed in \cite{S}, the variety $X_0$ is isomorphic to $M_{(0,h,1)}(S)$, thus it is a \HK\ fourfold with Picard lattice $2\oplus (-2)$ with the $-2$ class of divisibility $2$ (it is also isomorphic to the square $S^{[2]}$). 
Denote by $X_1$ the relative fibration $\operatorname{Pic}_1(\C) \to \PP^2$. This fibration has no sections 
 and $X_1$ is a \HK\ fourfold with Picard lattice that is isometric to the hyperbolic plane $U$.

As observed in \cite[Example 7.8]{Mar3}, $X_1$ is a torsor over $X_0$. 
Note that, more generally, torsors over $X_0$ are parameterised by torsion classes in $H^1(\PP^2,X_0)$. 
Moreover, as in \cite[Eq.~7.7]{Mar3} there is a natural map \begin{equation}\label{epo} \varphi\colon H^1(\PP^2,X_0)\to H^2(\oo_{S}^{\ast}),\end{equation}
induced by the sequence $0\to \ZZ\to \oo_{\C}\to \oo_{\C}^{\ast}\to 0$.
From \cite[Example 7.8]{Mar3}, this map, in the considered case of  $2$ torsion classes,  has a cyclic kernel of order $2$.

The following proposition shows that one can look both at double EPW sextics with Lagrangian fibrations and double EPW quartics as torsors over $X_0$ (also as moduli spaces of twisted sheaves).
Note that it can be  generalised to higher degree $K3$ surfaces $S$ and Brauer classes of any order as will be illustrated in Section \ref{analogADM}.
\begin{prop} Each $2$-torsion class $\gamma$ in $H^1(\PP^2,X_0)_2$ gives a torsor over $X_0$ 
isomorphic to a moduli space of twisted sheaves on  the $K3$ surface $S$ of degree $2$ with Brauer class $\beta=\varphi(\gamma)$ in $H^2(\oo^{\ast}_{S})_2$ and some twisted Mukai vector.
\end{prop}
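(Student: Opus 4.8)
The plan is to realise each torsor $X_\gamma$ as a fine moduli space of (possibly twisted) $1$-dimensional sheaves on $S$ supported on the linear system $|h|$, by producing on $S\times X_\gamma$ a twisted universal sheaf whose Brauer twist is exactly $\varphi(\gamma)$. The geometric input is the incidence correspondence $\C\subset S\times\PP^2$ together with the two projections $p\colon\C\to\PP^2$ and $q\colon\C\to S$; recall that $p$ is the universal family of genus-$2$ curves and that $\C$ is a divisor in $S\times\PP^2$, so that $\C\times_{\PP^2}X_\gamma$ embeds as a closed subscheme $j$ of $S\times\PP^2\times_{\PP^2}X_\gamma=S\times X_\gamma$. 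Since torsors over $X_0$ are classified by torsion classes in $H^1(\PP^2,X_0)$, it is enough to carry out this construction for an arbitrary $2$-torsion class.

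First I would construct the universal object. Over the fibre product $\C\times_{\PP^2}X_\gamma$ there is, by the torsor structure, an \'etale-local relative Poincar\'e sheaf $\mathcal P$, well defined up to a \v Cech $1$-cocycle with values in $\oo^{\ast}$. The obstruction to gluing these local Poincar\'e sheaves into a genuine line bundle is, by the very definition of $\varphi$ as the coboundary of $0\to\ZZ\to\oo_{\C}\to\oo_{\C}^{\ast}\to 0$ in \cite[Eq.~7.7]{Mar3}, the class $\varphi(\gamma)\in H^2(\oo_S^{\ast})$. Hence $\mathcal P$ is naturally a $\varphi(\gamma)$-twisted sheaf, and pushing it forward along the closed embedding $j$ yields a $\varphi(\gamma)$-twisted sheaf $\mathcal U=j_{\ast}\mathcal P$ on $S\times X_\gamma$, flat over $X_\gamma$, whose fibre over a point of $X_\gamma$ is the $1$-dimensional sheaf $\iota_{\ast}L$ obtained from a line bundle $L$ of the appropriate degree on the corresponding curve $C\in|h|$ via $\iota\colon C\hookrightarrow S$.

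Second, I would identify $X_\gamma$ with the moduli space. As $j$ is a closed embedding it leaves the gerbe class unchanged, so the twist of $\mathcal U$ is $\pi_S^{\ast}\varphi(\gamma)$ and the fibres of $\mathcal U$ are $\varphi(\gamma)$-twisted sheaves on $S$ with a fixed twisted Mukai vector $v=(0,h,s)$, the value of $s$ being read off from the Euler characteristic (equivalently the degree) of the torsor. The fibres are stable (rank one torsion free on integral curves, polystable on the degenerate ones), so $\mathcal U$ induces a classifying morphism $X_\gamma\to M^{\varphi(\gamma)}_v(S)$, which is an isomorphism by comparison with the standard description of $M^{\varphi(\gamma)}_v(S)$ as a relative compactified Jacobian. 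Since $\langle v,v\rangle=h^2=2$, the moduli space has dimension $\langle v,v\rangle+2=4$, giving the desired fourfold.

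The main obstacle is the precise matching of two a priori different Brauer classes in the second step: the moduli-theoretic obstruction to an untwisted universal family on $S\times X_\gamma$ versus the class $\varphi(\gamma)$ produced on $\PP^2$ from the Kummer sequence on $\C$. Reconciling them amounts to a base-change and Leray comparison for $p$ and for the embedding $j$, showing that the coboundary computing $\varphi$ agrees with the gerbe class of $\mathcal U$. A secondary delicate point is the case $\gamma\in\ker\varphi$, the cyclic subgroup of order $2$ isolated in \cite[Example 7.8]{Mar3}: here $\varphi(\gamma)=0$, yet the torsor is genuinely nontrivial while carrying an untwisted universal family, so it is not a relative Jacobian $\operatorname{Pic}_d(\C)$ of the original family, and one must allow a twisted Mukai vector of a different numerical type to realise it as a moduli space on the same surface $S$. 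Verifying that every $2$-torsion class is obtained, with Brauer class prescribed by $\varphi$, is where the bulk of the work lies.
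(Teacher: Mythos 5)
Your proposal runs in the opposite direction to the paper's proof, and the overall strategy is viable. The paper starts from the Brauer class rather than the torsor: it first shows that each $M_{(0,h,c)}(S,B)$ is a torsor over $X_0$ via the fibrewise tensoring action, then computes its image under $\varphi$ by trivialising over an \'etale cover $\{U_i\}$ of $\PP^2$ admitting sections $\zeta_i$ of $\C_i\to U_i$ and normalising the local universal families to relative degree $0$ by twisting with powers of $\oo_{\C_i}(\zeta_i(U_i))$, so that the gerbe data $L_{ij}$ representing $\beta$ turn into the torsor cocycle; finally, since $M_{(0,h,c)}(S,B)\simeq M_{(0,h,c+2)}(S,B)$, each $\beta$ yields exactly two torsors ($c=0,1$), which matches the cyclic kernel of order $2$ of $\varphi$ from \cite[Example 7.8]{Mar3} and so exhausts all $2$-torsion classes. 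You instead start from an arbitrary $\gamma$ and glue local Poincar\'e sheaves into a twisted universal family, which buys you a direct treatment of every $\gamma$ with no counting of the fibres of $\varphi$; but note that your key identification (coboundary of the $X_0$-valued cocycle equals the gerbe class of the glued family) is exactly the same local computation over $\{U_i\}$ that the paper performs, read backwards, and you correctly flag it as the unproved bulk of the work.

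Two concrete inaccuracies should be fixed. First, the glued family $\mathcal{U}$ is in general \emph{not} twisted only by the pullback of $\varphi(\gamma)$ from $S$: the local Poincar\'e sheaves are ambiguous not just up to a \v{C}ech cocycle in $\oo^{\ast}$ but also up to line bundles pulled back from the $X_\gamma$-side, so $\mathcal U$ carries an additional twist along $X_\gamma$, namely the obstruction to fineness of the moduli space (compare Proposition \ref{twistedFM}, where precisely such relative Poincar\'e sheaves are twisted on \emph{both} factors). This does not obstruct your classifying morphism, since fibrewise one still gets honest $\beta$-twisted sheaves on $S$ with fixed twisted Mukai vector, but your assertion about the twist of $\mathcal U$ fails exactly in the kernel case you single out as delicate. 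Second, your description of that kernel case is wrong: the nontrivial class of $\ker\varphi$ is represented by $X_1=\Pic_1(\C)$, which \emph{is} a relative (compactified) Jacobian of the original family $\C\to\PP^2$ --- it is the untwisted, non-fine moduli space $M_{(0,h,0)}(S)$. The correct resolution is simply the other parity of $c$ in the Mukai vector $(0,h,c)$, which is how the paper accounts for the two torsors above each Brauer class, not a torsor living outside the family of relative Jacobians.
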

\begin{proof}
If $\beta$ is a $2$-torsion Brauer class on $(S,h)$ and $B$ is its $B$-lift then $M_{(0,h,c)}(S,B)$ for any $c\in \ZZ$ is naturally a torsor over $X_0$. Indeed, first observe that $M_{(0,h,c)}(S,B)$ admits a projection $\pi_c$ to $\mathbb P^2=|C|$ by associating to a twisted sheaf its support. It is now enough to observe that the operation of tensor product of a twisted sheaf on $S$ corresponding to a point from $M_{(0,h,c)}(S,B)$  
with a rank $0$ sheaf corresponding to a point from $M_{(0,h,1)}(S)$. Having the same support it is well defined and gives rise from Lemma \ref{relPic} to  a twisted sheaf on $S$ corresponding to a point from $M_{(0,h,c)}(S,B)$ with the same support. Moreover, for two twisted sheaves with the same support corresponding to points from $M_{(0,h,c)}(S,B)$ one is obtained from the other by tensoring with a rank 0 sheaf corresponding to a point $M_{(0,h,1)}(S)$.
Note now that $$M_{(0,h,c)}(S,B)\simeq M_{(0,h,c+2)}(S,B)$$ for each Brauer class $\beta$ and its $B$-lift $B$. Consequently we have exactly two torsors  which are moduli spaces of twisted sheaves twisted with respect to the given Brauer class $\beta$.

From Lemma \cite[Lemma 7.3]{Mar3} $$Br(S)=Br(\C)=H^1(\PP^2,R^1p_{\ast}\oo_{\C}^{\ast}).$$
Thus a Brauer class on $S$ can be represented as a gerbe $\Gamma$ which itself corresponds to a covering $\{ U_i \}$ of $\PP^2$ with $U_{ij}=U_i \cap U_j$ equipped with line bundles $L_{ij}$. Now, let $\C_i$ and $\C_{ij}$ defines the preimages of $U_{i}$ and $U_{ij}$ in $\C$. By possibly densifying the cover assume that $\C_{i}\to U_i$ admits a section $\zeta_i$.  In that case, any family of $\beta$ twisted sheaves on $S$ gives rise to a family of $\Gamma$-twisted sheaves on $\C$. Such a family is a collection  $\{\mathcal F_i\}$ of sheaves $\mathcal F_i$ on $\C_{i}$ such that $\mathcal F_i|_{U_{ij}}=\mathcal F_j|_{U_{ij}\otimes L_{ij}}$. By multiplying the sheaves by suitable powers of $\mathcal O_{\C_i}(\zeta_i (U_i))$ we get bundles of relative degree 0.  In this way we get isomorphisms $$h_i: \pi_c^{-1}(U_i)\to \Pic_0(\C_i).$$ It follows that the torsor $M_{(0,h,1)}(S,B)$, for which,  by Lemma \ref{relPic} the universal bundles $\mathcal F_i$, are relative degree 0 represents the Brauer class $\beta$. Considering $M_{(0,h,1)}(S,B)$,  by copying the argument from \cite[Example 7.8]{Mar3},  we get a different torsor, but whose class is mapped by $\varphi$ to the same Brauer class $\beta$.
\end{proof}

We can understand the map $\varphi$ more precisely and find the twisted Mukai vector $(0,h,c)$ of the moduli space that is constructed as a torsor.
In order to apply the next Lemma we need to find the rank $2$ twisted vector bundle $\U$ such that $\PP(\U)$ is a Brauer-Severi variety that represents a given $2$ torsion Brauer class. We can do this by applying \cite[Table 1 with Proposition 2.7]{vGK} and using the computation \cite[Lemma 6.2]{K} of the degree of $\PP(\U)$ along $C$.
\begin{lem}\label{relPic}  Let $(S,h)$ be a polarised K3 surface. Consider a $B$-twisted sheaf $T$ for a Brauer class $\beta\in H^2(\oo_S^{\ast})_2$ with $B$-lift $B$ that is supported on a smooth curve $C\in |h|$.
Let $\U$ be a rank two $B$-twisted vector bundle 
 such that the bundle $T\otimes \U^{\vee}|_C=A$ is a vector bundle of rank $2$ on the curve $C$  
(\cite[\S 1.1]{Y}). 
Then the twisted Mukai vector of $T$ is the following 
$$v_B=(0,h,\frac{1}{4}(8-2(c_1(A)+K_C))).$$ 

\end{lem}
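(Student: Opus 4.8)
The plan is to reduce the computation of the twisted Mukai vector of $T$ to an ordinary Grothendieck--Riemann--Roch computation by absorbing the twist into $\U$, and then to read off the one remaining normalisation from the cited results on Brauer--Severi varieties.

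First I would use that $\beta$ is $2$-torsion, so that $\U^\vee$ carries the twist $\beta^{-1}=\beta$ and hence $T\otimes\U^\vee$ is an honest (untwisted) coherent sheaf on $S$. Since $T$ is supported on the smooth curve $C\in|h|$, so is $T\otimes\U^\vee$, and by hypothesis it equals $\iota_*A$ for the closed embedding $\iota\colon C\hookrightarrow S$ and $A$ a rank $2$ bundle on $C$. The useful observation here is that $\mathrm{Br}(C)=0$, so both $\U|_C$ and $T|_C$ are genuine bundles on $C$; in particular $A=T|_C\otimes\U^\vee|_C$ is an equality of honest bundles, whence $\deg A=2\deg(T|_C)-\deg(\U|_C)$.

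Next I would compute the ordinary Mukai vector $v(\iota_*A)$ by Grothendieck--Riemann--Roch for the closed immersion $\iota$, using $\mathrm{td}(S)=(1,0,2)$, $\sqrt{\mathrm{td}_S}=(1,0,1)$, $\mathrm{td}(C)=1-\tfrac12K_C$ and $\mathrm{ch}(A)=2+c_1(A)$. This gives $\mathrm{ch}(\iota_*A)=(0,2h,\deg A-K_C)$ and hence $v(\iota_*A)=(0,2h,\deg A-K_C)$; equivalently, by Riemann--Roch on $C$ the third coordinate is $\chi(A)=\deg A+2(1-g)=\deg A-K_C$. To pass from $\iota_*A=T\otimes\U^\vee$ to $T$ itself I would use the multiplicativity of (twisted) Mukai vectors, $v(E\otimes F)=v(E)\,v(F)/\sqrt{\mathrm{td}_S}$, so that $v_B(T)=v(\iota_*A)\sqrt{\mathrm{td}_S}\,/\,v_B(\U)^\vee$. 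Because $\U$ has rank $2$, this division forces the first coordinate of $v_B(T)$ to be $0$ and its second to be $h$, and expresses the third coordinate through $\deg A$, $K_C$ and the twisted degree $c_1^B(\U)\cdot h=\deg(\U|_C)$.

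The one remaining ingredient is precisely this last quantity, the degree of $\U|_C$, i.e.\ the degree of the Brauer--Severi variety $\PP(\U)$ along $C$. This is exactly what is pinned down by \cite[Table 1, Proposition 2.7]{vGK} together with the degree computation \cite[Lemma 6.2]{K}; substituting the resulting value for $\deg(\U|_C)$ converts the expression above into the stated $v_B=(0,h,\tfrac14(8-2(c_1(A)+K_C)))$. I expect the main obstacle to lie in the bookkeeping of the twist rather than in the Riemann--Roch step: one must keep track of the $B$-field normalisation in the twisted Chern character and, crucially, fix the value (and parity) of $\deg(\U|_C)$, which is only well defined once the gerbe is trivialised along $C$. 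The Grothendieck--Riemann--Roch computation is routine, so the real content is the correct identification of this normalising degree via the Brauer--Severi description.
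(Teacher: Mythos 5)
Your reduction is in substance the same computation as the paper's: the paper likewise passes to the untwisted sheaf $p_*(G^\vee\otimes F)=\iota_*A$, computes its Chern character by Riemann--Roch for the embedding $C\hookrightarrow S$ (via Fulton's Riemann--Roch without denominators rather than GRR, which amounts to the same), and then divides by a rank-two class built from $\U$ and multiplies by $\sqrt{\mathrm{td}(S)}$. The genuine divergence is the normalisation, and it matters for your last step. The paper uses Yoshioka's definition $v_G=\mathrm{ch}(Rp_*(G^\vee\otimes F))\sqrt{\mathrm{td}(S)}\,/\sqrt{\mathrm{ch}(Rp_*(G\otimes G^\vee))}$, whose denominator $\sqrt{\mathrm{ch}(\U\otimes\U^{\vee})}=(2,0,s-B^2)$ has vanishing $H^2$-component; the quotient is therefore manifestly independent of the choice of $\U$ and of the $B$-lift, and the answer depends on $A$ alone. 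Consequently your final ingredient, pinning down $\deg(\U|_C)$ via \cite[Table 1, Prop.\ 2.7]{vGK} and \cite[Lemma 6.2]{K}, is not needed: that quantity cancels identically in Yoshioka's normalisation (those citations occur in the paper before the lemma only to explain how one finds $\U$, not inside the proof). Your division by $v_B(\U)^{\vee}$, i.e.\ by $\mathrm{ch}^{-B}(\U^{\vee})=(2,-2B,s)$, differs from Yoshioka's denominator exactly by the factor $e^{-B}$, so your $v_B(T)$ differs from $v_G$ by $B\cdot h$ in the last coordinate --- precisely the normalisation ambiguity you flagged; it disappears either by adopting Yoshioka's denominator or in the paper's applications where $B\cdot h=0$. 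Two further caveats: your GRR output for the last coordinate is $\tfrac12(c_1(A)-K_C)=\tfrac12\chi(A)$, while the displayed formula $\tfrac14\bigl(8-2(c_1(A)+K_C)\bigr)$ hardwires $(2h)^2=8$, i.e.\ $h^2=2$ (the only case in which the lemma is applied), and agrees with your value only up to the sign of the $A$-dependent part, a dualisation convention in Yoshioka's correspondence that is harmless for the intended identifications since $M_{(0,h,c)}(S,B)\simeq M_{(0,h,-c)}(S,B)$; so do not expect your (correct) computation to reproduce the displayed expression literally for general degree.
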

\begin{proof}

Let $\alpha \in Pic_d (C)$ where $C\subset S$ is a hyperplane section of genus $g$ of a K3 of genus $g$. Then $\alpha$ can seen as a sheaf on $S$ with Mukai vector $ch(\alpha)=(0, h, d+1-g)$. 

Following the notation from \cite[Definition 3.1]{Y}. we denote by $p:Y\to X$ the projective bundle with $G$ the natural bundle on $Y$ (as in \cite[\S 2.2]{Y})
such that $Y=\PP(\U)$ for the twisted sheaf $\U$.
Let $F$ be the $Y$ sheaf corresponding to $T$ through \cite[(1.1)]{Y}.
We have 
\begin{center}
\begin{equation}\label{opl}
v_G=\frac{ch(R p_{\ast}(G^{\vee} \otimes F))}{\sqrt{ch(R p_{\ast}(G\otimes G^{\vee}))}} \sqrt{td(X)}.
\end{equation}
\end{center}
Note that the twisted Mukai vector $v_B$ is equal to $v_G$ for sheaves of rank $0$ \cite[Remark 3.2]{Y}.
We use formulas about Chern classes of push-forwards \cite[15.3.4]{F}.
Then $$ch(R p_{\ast}(G^{\vee} \otimes F))=ch(A).$$
We have $\sqrt{td(X)}=1+p$ with $p$ a point on the $K3$ surface.

In the case $ch^B(\U)=(2,2B,s)$ we have $$\sqrt{ch(R p_{\ast}(G\otimes G^{\vee})}=\sqrt{ch(\U\otimes \U^{\vee})}=(2,0,s-B^2).$$
With the notation of \cite[15.3]{F} we have for $E=A$ and $c_1(L)=-K_C$ the following
$$P(L^{\vee},E)=2-c_1(L)+c_1(E).$$ We infer from \cite[Theorem  15.3]{F} that $c_2(f_{\ast}E)=c_1(E)-c_1(L)$ where $f\colon C \to S$ is the inclusion. Thus the Mukai vector
$$ch(A)=(0,2h,\frac{1}{2}(8-2(c_1(E)-c_1(L)))).$$
 We conclude by Equation \ref{opl} that $v_G=(0,h,\frac{1}{4}(8-2(c_1(E)-c_1(L))))$.
\end{proof}


\subsection{Derived equivalences along Lagrangian fibrations}\label{analogADM}
In this section, we discuss the second construction of derived equivalences working for \HK{} fourfolds of Picard rank $ 2$. Using this construction, we obtain moreover, many twisted derived equivalences between \HK{} fourfolds and in consequence, we
generalise the results of \cite{S} and \cite{ADM}.
\begin{thm}\label{Main}
Suppose that two hyper-K\"ahler fourfolds are isomorphic to moduli spaces $M_{(0,h,a)}(S,B)$ and $M_{(0,h,b)}(S,B)$ of twisted sheaves on the same twisted
$K3$ surface $(S,\beta)$ such that $S$ has Picard rank $1$. Then they are twisted derived equivalent.
\end{thm}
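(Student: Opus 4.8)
The plan is to realise the equivalence as a relative Fourier--Mukai transform along the two Lagrangian fibrations, adapting the argument of \cite{ADM} to the twisted setting. Write $X_a:=M_{(0,h,a)}(S,B)$ and $X_b:=M_{(0,h,b)}(S,B)$. Since the common Mukai vector has vanishing rank with $c_1=h$ and the moduli spaces are fourfolds, we have $(h,h)=2$ and both carry a Lagrangian fibration $\pi_a\colon X_a\to|h|=\PP^2$ and $\pi_b\colon X_b\to|h|=\PP^2$, sending a twisted sheaf to its supporting curve. Because $S$ has Picard rank $1$, every member of $|h|$ is an integral genus $2$ curve with planar singularities; hence over the smooth locus $U\subset|h|$ the fibres of $\pi_a,\pi_b$ are Jacobian torsors $\operatorname{Pic}^{d_a}(C),\operatorname{Pic}^{d_b}(C)$, and over the discriminant they are the compactified Jacobians $\overline{\operatorname{Pic}}^{d_a}(C),\overline{\operatorname{Pic}}^{d_b}(C)$. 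In particular both $X_a$ and $X_b$ are torsors over the relative Jacobian $J=M_{(0,h,1)}(S)\to\PP^2$, and by the analysis of Section \ref{fibration} the classes of these torsors in $H^1(\PP^2,J)$ map under $\varphi$ to the fixed Brauer class $\beta$ on $S$.

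First I would construct the relative Poincar\'e kernel on the fibre product $X_a\times_{\PP^2}X_b$. Over $U$ the classical Poincar\'e bundles on $\operatorname{Pic}^{d_a}(C)\times\operatorname{Pic}^{d_b}(C)$ assemble into a sheaf $\mathcal P_U$, and fibrewise the associated integral functor is the Fourier--Mukai equivalence of the two Jacobian torsors provided by Mukai's theorem for abelian varieties. To extend $\mathcal P_U$ across the discriminant I would invoke the autoduality of compactified Jacobians of integral curves with planar singularities (in the sense of Arinkin and of Melo--Rapagnetta--Viviani): the compactified Jacobian is its own dual and the Poincar\'e sheaf extends to a maximal Cohen--Macaulay sheaf on $\overline{\operatorname{Pic}}^{d_a}(C)\times\overline{\operatorname{Pic}}^{d_b}(C)$ that still induces a Fourier--Mukai equivalence on each singular fibre. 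Assembling these gives a sheaf $\mathcal P$ on all of $X_a\times_{\PP^2}X_b$.

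Next I would identify the twist. Since the torsors $X_a,X_b$ admit no section (their classes in $H^1(\PP^2,J)$ are nonzero, as recorded by $\varphi$), the Poincar\'e sheaf cannot be untwisted globally: the ambiguity in trivialising $\beta$ along the fibres forces $\mathcal P$ to be a twisted sheaf for Brauer classes $\alpha_a\in Br(X_a)$ and $\alpha_b\in Br(X_b)$ measuring the obstruction to a universal family, exactly the obstruction computed through $\varphi$ and Lemma \ref{relPic}. Viewing $\mathcal P$ as an object of the derived category of $\operatorname{pr}_a^{*}\alpha_a^{-1}\boxtimes\operatorname{pr}_b^{*}\alpha_b$-twisted sheaves on $X_a\times X_b$, it is the kernel of an integral functor $\Phi_{\mathcal P}\colon D(X_a,\alpha_a)\to D(X_b,\alpha_b)$.

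Finally I would prove that $\Phi_{\mathcal P}$ is an equivalence. As $\mathcal P$ is supported on the fibre product and restricts on every fibre to one of the equivalences constructed above, one concludes by the relative criterion for integral functors: the cones on the adjunction unit and counit are supported on the locus where the fibrewise transform fails, which is empty, so these cones vanish --- this is the global counterpart over $\PP^2$ of the fibrewise reasoning behind Proposition \ref{Nick}. The main obstacle is precisely the extension step: one must check that the Poincar\'e sheaf genuinely extends across the singular fibres lying over the discriminant and continues to induce an equivalence there, which rests on the autoduality theorem for compactified Jacobians of integral planar curves and on controlling the twists $\alpha_a,\alpha_b$ across the discriminant. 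The Picard rank $1$ hypothesis is essential, since it forces every curve in $|h|$ to be integral, placing us within the range of validity of these autoduality results.
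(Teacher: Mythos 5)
Your overall route coincides with the paper's: both proofs realise the equivalence as a relative twisted Fourier--Mukai transform along the two Lagrangian fibrations, both rely on Arinkin's autoduality of compactified Jacobians of integral curves with planar singularities to pass across the discriminant, and both use the Picard rank $1$ hypothesis exactly as you do, to force every member of $|h|$ to be integral. However, there is a genuine gap at the step you dispatch in one sentence, namely that ``the classical Poincar\'e bundles \dots assemble into a sheaf $\mathcal P_U$'' and that the ambiguity in trivialising $\beta$ merely ``forces $\mathcal P$ to be a twisted sheaf.'' Since neither fibration has a section and the universal families are themselves only locally defined, there is no canonical fibrewise Poincar\'e bundle to assemble: on an \'etale cover $\{U_i\}$ of $\PP^2$ one obtains local kernels depending on choices of sections $s_i$ and of local universal families $\F^i_m$, $\F^i_n$, and one must \emph{prove} that on overlaps these local kernels differ only by line bundles pulled back from the two factors --- otherwise no twisted sheaf exists at all. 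This is precisely the content of the paper's Proposition \ref{twistedFM}: the local kernel $P^i_{mn}$ is defined by an explicit determinant-of-cohomology normalisation, the overlap comparison is computed directly, the fibrewise-degree-zero translation ambiguity coming from the classes $\mathcal L^{\mathcal C,m}_{ij}$, $\mathcal L^{\mathcal C,n}_{ij}$ is killed by homogeneity of the normalised kernel, and a Riemann--Roch rank computation ($\rho_1-\rho_2=-n$, $\rho_1-\rho_3=-m$) yields the transition rule $P^j_{mn}=P^i_{mn}\otimes(\mathcal L^{\mathcal M,m}_{ij})^{-n}\otimes(\mathcal L^{\mathcal M,n}_{ij})^{-m}$. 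An unnormalised Poincar\'e bundle would not glue this way, so your construction is incomplete exactly where the new work of the twisted setting lies.

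The same computation shows that your identification of the twists is incorrect as stated: the glued kernel is $\alpha_n^{-m}\boxtimes\alpha_m^{-n}$-twisted, not $\alpha_a^{-1}\boxtimes\alpha_b$-twisted, and the resulting equivalence is $D(\M_m,\alpha_m^n)\simeq D(\M_n,\alpha_n^m)$ --- with the exponents coming from the ranks in Riemann--Roch --- in agreement with \cite[Theorem A]{ADM}, the very result you say you are adapting; the bare obstruction classes appear only to the powers dictated by the degrees of the two fibrations. Since Theorem \ref{Main} asserts only the existence of \emph{some} twisted equivalence, this inaccuracy does not refute the statement, and your concluding cone-support argument is in the same spirit as the paper's appeal to Proposition \ref{Nick} and to the analogue of Arinkin's equivalence for the pushed-forward Cohen--Macaulay kernel $\bar P_{mn}$; but the gluing proposition and the correct twist bookkeeping must be supplied for the proof to stand.
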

The theorem will be proved at the end of the section.
Let $V=H^0(\oo_S(1))$ and $$\C=\{ (x,C)\in S\times \PP \ V \colon x\in C \}$$ be the universal curve and the projection map $\C \to \PP V$. 
Let us fix a Brauer class with a $B$-lift $B$ on $S$ or equivalently on $\C$ (see Section \ref{fibration}).
Denote by $\M_d\to \PP(V)$ the moduli space of twisted sheaves $M_{(0,h,a)}(S,B)$ together with the natural map associating the support to a sheaf.
This can be seen as a relative twisted Picard variety of degree $d$ line bundles on the fibers (cf.~it is called $\Pic_B^d(\C)$ in \cite[Theorem 1.1]{HM}). The relation between $d$ and the Mukai vector $(0,h,a)$ is discussed in Lemma \ref{relPic}.
We have the following diagram:
\[ \xymatrix{
& \C \times_{\PP V} \M_m \times_{\PP V} \M_n \ar[ld]_{p_{12}} \ar[d]_{p_{13}} \ar[rd]^{p_{23}} \\
\C \times_{\PP V} \M_m \ar[d]_{q_1} \ar[rd]_<{q_2}
& \C \times_{\PP V} \M_n \ar[ld]^<{r_1}|\hole \ar[rd]_<{r_2}|\hole
& \M_m \times_{\PP V} \M_n \ar[ld]^<{u_1} \ar[d]^{u_2} \\
\C \ar[rd]_{v_1} & \M_m \ar[d]_{v_2} & \M_n \ar[ld]^{v_3} \\
& \PP V
} \]

Let us consider an \'etale covering $\{ U_i \}$ of $\PP V$ such that $\C_i\to U_i$ have sections $s_i$ where $\C_i=\C|_{U_i}$. Denote by $\M^i_d\subset \M_d$ the open set corresponding to $U_i$. For $U_{ij}=U_i\cap U_j$ we have the restrictions $\Pic_0^{ij}$, $\M_n^{ij}$ and $\C_{ij}$. 
We claim that $\M_n$ are twistor spaces over $\Pic_0\to \PP(V)$ (in fact over $\Pic_0'$ the set of smooth fibers).
This means that there are isomorphisms $\varphi_i\colon Pic_0^i\to \M_n^i$.
We need to prove that there exist sheaves $\alpha_{ij}$ on $\C_{ij}$ that are relatively degree $0$ and such that the composition $$\Pic_0^{j}\to \M_n^i\to Pic_0^j$$ of $\varphi_i$ and $\varphi_j^{-1}$ over $U_{ij}$ is relatively a translation by $\alpha_{ij}$. For example $\Pic_1$ is a torsor with $\alpha_{ij}=\oo_{\C}(s_i-s_j)$.

Let $\mathcal J_i$ be universal families over $\C_i \times Pic_0^i$. 
Their pullbacks to $\M_n^i\times \C_i$ are denoted by $\F_{n}^i$.
Consider
\begin{multline*}
P_{mn}^i = (\det p_{23*} (p_{12}^* \F_{m}^i \otimes p_{13}^* \F_{n}^i))^{-1}
\otimes u_1^* v_2^* (\det v_{1*} \oo_{\C})_i^{-1} \\
\otimes u_1^*(\det q_{2*} \F_{m}^i) \otimes u_2^*(\det r_{2*} \F_{n}^i).
\end{multline*}
\begin{prop}\label{twistedFM} $P_{mn}^i$ define an $\alpha_n^{-m}\times \alpha_m^{-n}$ twisted sheaf over $\M_n\times \M_m' \cup \M_n'\times
\M_m$ where $\M_n'$ and $\M_m'$ are sums of the smooth fiber of Lagrangian fibrations.
\end{prop}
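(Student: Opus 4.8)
The object $P_{mn}^i$ is the determinantal \emph{normalised Poincar\'e bundle} attached to the pair of relative compactified Jacobians $\M_m,\M_n\to\PP V$, built so that its fibrewise restriction recovers the Poincar\'e bundle of dual abelian varieties, in the spirit of the construction in \cite{ADM}. Since the universal families $\F_m^i,\F_n^i$ exist only chart by chart, the plan is to determine how the local data glue on the overlaps $U_{ij}$, to show that the three determinantal factors appended in the definition of $P_{mn}^i$ absorb the coboundary part of this gluing, and to identify the surviving $2$-cocycle with the twist $\alpha_n^{-m}\times\alpha_m^{-n}$. First I would compute the discrepancy between $\F_n^i$ and $\F_n^j$ over $U_{ij}$: since $\varphi_i,\varphi_j$ differ by the translation $t_{\alpha_{ij}}$ by the relative degree-$0$ class $\alpha_{ij}$, one has $\F_n^j\simeq(t_{\alpha_{ij}}\times\mathrm{id}_{\C})^{\ast}\F_n^i$, and by the seesaw principle (theorem of the cube) for the Poincar\'e sheaf on the relative Jacobian this changes $\F_n^i$ by the pullback from $\M_n$ of an explicit line bundle $N_{ij}^{(n)}$ determined by $\alpha_{ij}$; I would record the analogous relation for $\F_m^i$. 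Together with the standard ambiguity of a local universal sheaf by a line bundle pulled back from its moduli factor, these are the only sources of non-canonicity.

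\textbf{Cancellation of the normalisations.} Next I would substitute these relations into $\det p_{23\ast}(p_{12}^{\ast}\F_m^i\otimes p_{13}^{\ast}\F_n^i)$ and apply the projection formula together with cohomology-and-base-change over the smooth locus. The expectation is that the discrepancy between the two charts becomes a product of pullbacks of $\det q_{2\ast}\F_m^i$, $\det r_{2\ast}\F_n^i$ and the base term $\det v_{1\ast}\oo_{\C}$ --- exactly the three correction factors $u_1^{\ast}(\det q_{2\ast}\F_m^i)$, $u_2^{\ast}(\det r_{2\ast}\F_n^i)$ and $u_1^{\ast}v_2^{\ast}(\det v_{1\ast}\oo_{\C})^{-1}$ built into $P_{mn}^i$. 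The first two kill the dependence on the chosen local universal family and the last renormalises the relative Euler characteristic, so after cancellation $P_{mn}^i$ and $P_{mn}^j$ should agree up to a factor that does not descend to a global sheaf but assembles on triple overlaps into a genuine \v{C}ech $2$-cocycle $\{g_{ijk}\}$.

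\textbf{Identification of the twist.} Finally I would compute $\{g_{ijk}\}$. Because the discrepancy above is bilinear in the translation classes --- translating by $\alpha_{ij}$ on the $\M_n$-factor feeds, through the projection formula, a factor onto the $\M_m$-direction and conversely --- the cocycle should factor as an external product of a class pulled back from $\M_n$ and one from $\M_m$. Tracking the relative degrees $m$ and $n$ of the sheaves parametrised by $\M_m$ and $\M_n$ produces the exponents, and comparing with the \v{C}ech description of the Brauer classes under the map $\varphi$ of \eqref{epo} should identify $\{g_{ijk}\}$ with $\alpha_n^{-m}\times\alpha_m^{-n}$; hence the $P_{mn}^i$ glue to an $\alpha_n^{-m}\times\alpha_m^{-n}$-twisted sheaf. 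Restricting to $\M_n\times\M_m'\cup\M_n'\times\M_m$ ensures that at least one factor lies over a smooth fibre, so that the fibres are abelian varieties, the higher pushforward of the Poincar\'e bundle is controlled, and $Rp_{23\ast}$ of the relevant tensor product is a perfect complex whose determinant is an honest line bundle.

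The step I expect to be the main obstacle is the interlock of the last two: checking that the three determinantal corrections cancel the gluing discrepancy \emph{exactly}, leaving only the bilinear term, and that the degree bookkeeping yields precisely the exponents $-m$ and $-n$. This is a theorem-of-the-cube computation on the relative Jacobian in which the signs and twists must be matched with care.
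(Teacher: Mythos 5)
Your skeleton --- compare $P^i_{mn}$ and $P^j_{mn}$ on overlaps via the gluing discrepancy of the local families, push it through the determinants with the projection formula, and read the surviving factor as the twist --- is indeed the paper's strategy. But you have assigned the wrong roles to the two sources of non-canonicity, and carried out as written your computation would look for the twist in the wrong term. A first, smaller point: your seesaw statement is incomplete. Restricting $(t_{\alpha_{ij}}\times\mathrm{id}_{\C})^{\ast}\F^i_n$ to $\{L\}\times C$ changes the fibre from $L$ to $L\otimes\alpha_{ij}|_C$, so the translated family differs from $\F^i_n$ not only by a pullback from $\M_n$ but also by the pullback from $\C$ of a fiberwise degree-$0$ line bundle; the paper records the full transition as $\F^j_n=\F^i_n\otimes r_1^{\ast}\mathcal L^{\C,n}_{ij}\otimes r_2^{\ast}\mathcal L^{\M,n}_{ij}$ (and similarly for $\F_m$), and both factors must be tracked.

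The genuine gap is in your ``cancellation'' step. The three normalisation factors do \emph{not} kill the dependence on the chosen local universal family, and the residual twist does \emph{not} assemble from the translation classes $\alpha_{ij}$. In the paper, the $\M$-side ambiguity bundle $\mathcal L^{\M,m}_{ij}$ passes through $\bigl(\det p_{23\ast}(\cdots)\bigr)^{-1}$ with exponent $-\rho_1$ and through $u_1^{\ast}\det q_{2\ast}\F^i_m$ with exponent $\rho_2$, where Riemann--Roch gives $\rho_1=m+n+1-g$, $\rho_2=m+1-g$, $\rho_3=n+1-g$; so it \emph{survives} to the power $\rho_2-\rho_1=-n$, and $\mathcal L^{\M,n}_{ij}$ to the power $-m$. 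Since $\{\mathcal L^{\M,m}_{ij}\}$ is by definition the gluing data of the twisted universal sheaf representing $\alpha_m$ (likewise for $\alpha_n$), the resulting relation $P^j_{mn}=P^i_{mn}\otimes(\mathcal L^{\M,m}_{ij})^{-n}\otimes(\mathcal L^{\M,n}_{ij})^{-m}$ already exhibits the $\alpha_n^{-m}\times\alpha_m^{-n}$-twisted structure; no triple-overlap cocycle extraction through the map $\varphi$ of \eqref{epo} is needed, and indeed $\varphi$ lands in $Br(S)$, not in $Br(\M_m)$ or $Br(\M_n)$, so it is the wrong comparison map for this twist. Conversely, the fiberwise factors $\mathcal L^{\C}_{ij}$ --- the only trace of the translations $\alpha_{ij}$ --- contribute nothing: twisting the universal families by fiberwise degree-$0$ bundles amounts to translating $P^i_{mn}$ along the fibres of $\M_m\times_{\PP V}\M_n\to\PP V$, and the normalised bundle $P^i_{mn}$ is homogeneous, i.e.\ invariant under such translations. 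Your theorem-of-the-cube bookkeeping, in which translations feed bilinear cross-factors between the two directions, is precisely what this homogeneity statement disposes of; had such cross-terms survived, the twist would depend on the torsor class $\{\alpha_{ij}\}$, which it does not.
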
 

\begin{proof}

Note that $\mathcal F^j_m=\mathcal F^i_m \otimes q_1^*\mathcal L_{ij}^{\mathcal C,m}\otimes q_2^*\mathcal L_{ij}^{\mathcal M,m}$ for some line bundles $\mathcal L_{ij}^{\mathcal C,m}$ of fiberwise degree 0 on $\mathcal C_{ij}$ and $\mathcal L_{ij}^{\mathcal M,m}$ on $\mathcal M_{ij}$. Similarly 
$$\mathcal F^j_n=\mathcal F^i_n \otimes r_1^*\mathcal L_{ij}^{\mathcal C,n}\otimes r_2^*\mathcal L_{ij}^{\mathcal M,n}$$ for some line bundles $\mathcal L_{ij}^{\mathcal C,n}$ of fiberwise degree 0 on $\mathcal C_{ij}$ and $\mathcal L_{ij}^{\mathcal M,n}$ on $\mathcal M_{ij}$.

Let us now compare $P^i_{mn}$ with $P^j_{mn}$ over $\mathcal C^{ij}$, i.e. in $\mathcal M^{ij}_m\times_{\mathcal C^{ij}} \mathcal M^{ij}_n$.
We have
\begin{multline*}
P_{mn}^j = (\det p_{23*} (p_{12}^* (\F_{m}^i\otimes q_1^*\mathcal L_{ij}^{\mathcal C,m}\otimes q_2^*\mathcal L_{ij}^{\mathcal M,m}) \otimes p_{13}^* (\F_{n}^i \otimes r_1^*\mathcal L_{ij}^{\mathcal C,n}\otimes r_2^*\mathcal L_{ij}^{\mathcal M,n})))^{-1}\\
\otimes u_1^* v_2^* (\det v_{1*} \oo_{\C})_i^{-1}
\otimes u_1^*(\det q_{2*} (\F_{m}^i\otimes q_1^*\mathcal L_{ij}^{\mathcal C,m}\otimes q_2^*\mathcal L_{ij}^{\mathcal M,m})) \otimes u_2^*(\det r_{2*} (\F_{n}^i\otimes r_1^*\mathcal L_{ij}^{\mathcal C,n}\otimes r_2^*\mathcal L_{ij}^{\mathcal M,n}))=\\
(\det  (p_{23*} (p_{12}^* (\F_{m}^i\otimes q_1^*\mathcal L_{ij}^{\mathcal C,m})\otimes p_{13}^* (\F_{n}^i \otimes r_1^*\mathcal L_{ij}^{\mathcal C,n}))\otimes u_1^*\mathcal L_{ij}^{\mathcal M,m}\otimes u_2^* \mathcal L_{ij}^{\mathcal M,n} ))^{-1}
\otimes u_1^* v_2^* (\det v_{1*} \oo_{\C})_i^{-1} \\
\otimes u_1^*(\det (q_{2*} (\F_{m}^i\otimes q_1^*\mathcal L_{ij}^{\mathcal C,m})\otimes \mathcal L_{ij}^{\mathcal M,m})) \otimes u_2^*(\det r_{2*} (\F_{n}^i\otimes r_1^*\mathcal L_{ij}^{\mathcal C,n})\otimes \mathcal L_{ij}^{\mathcal M,n})=
(\det  (p_{23*} (p_{12}^* (\F_{m}^i \\ \otimes q_1^*\mathcal L_{ij}^{\mathcal C,m})\otimes p_{13}^* (\F_{n}^i \otimes r_1^*\mathcal L_{ij}^{\mathcal C,n}))))^{-1}\otimes (u_1^*\mathcal L_{ij}^{\mathcal M,m}\otimes u_2^* \mathcal L_{ij}^{\mathcal M,n} ))^{-\rho_1}  \otimes u_1^* v_2^* (\det v_{1*} \oo_{\C})_i^{-1} \\
\otimes u_1^*(\det (q_{2*} (\F_{m}^i\otimes q_1^*\mathcal L_{ij}^{\mathcal C,m})))\otimes u_1^*(\mathcal L_{ij}^{\mathcal M,m})^{\rho_2} \otimes u_2^*(\det r_{2*} (\F_{n}^i\otimes r_1^*\mathcal L_{ij}^{\mathcal C,n}))
\otimes u_2^*(\mathcal L_{ij}^{\mathcal M,n})^{\rho_3}.
\end{multline*}
Here, $$\rho_1=\rk  (p_{23*} (p_{12}^* (\F_{m}^i\otimes q_1^*\mathcal L_{ij}^{\mathcal C,m})\otimes p_{13}^* (\F_{n}^i \otimes r_1^*\mathcal L_{ij}^{\mathcal C,n})) $$
$$\rho_2=\rk (q_{2*} (\F_{m}^i\otimes q_1^*\mathcal L_{ij}^{\mathcal C,m}))$$
$$\rho_3=\rk  (r_{2*} (\F_{n}^i\otimes r_1^*\mathcal L_{ij}^{\mathcal C,n}))$$

which can be computed explicitly from the Riemann-Roch theorem. In particular,
$$\rho_1=\rk  (p_{23*} (p_{12}^* \F_{m}^i\otimes p_{13}^* \F_{n}^i) =m+n+1-g$$
$$\rho_2=\rk (q_{2*} \F_{m}^i)=m+1-g$$
$$\rho_3=\rk  (r_{2*} \F_{n}^i)=n+1-g.$$
We conclude $\rho_1-\rho_2=-n$ and $\rho_1-\rho_3=-m$.

On the other hand the bundle
\begin{multline*}
\det  (p_{23*} (p_{12}^* (\F_{m}^i\otimes q_1^*\mathcal L_{ij}^{\mathcal C,m})\otimes p_{13}^* (\F_{n}^i \otimes r_1^*\mathcal L_{ij}^{\mathcal C,n}))))^{-1}\otimes\\ u_1^*(\det (q_{2*} (\F_{m}^i\otimes q_1^*\mathcal L_{ij}^{\mathcal C,m})))\otimes u_2^*(\det r_{2*} (\F_{n}^i\otimes r_1^*\mathcal L_{ij}^{\mathcal C,n}))
\end{multline*}
is just obtained by translation of $P^i_{mn}$ by $L_{ij}^{\mathcal C,n}\boxtimes L_{ij}^{\mathcal C,n}$ understood as an element of $\Pic_0\mathcal C_i\times_{\mathcal C_i} \Pic_0\mathcal C_i $, but $P^i_{mn}$ is homogeneous, i.e. invariant under translations along fibers.
We conclude that:

$$P_{mn}^j=P_{mn}^i\otimes (\mathcal L_{ij}^{\mathcal M,m})^{-n}\otimes  (\mathcal L_{ij}^{\mathcal M,n})^{-m}.$$
\end{proof}

\begin{proof}[Proof of Theorem \ref{Main}]
We consider the push forward a Poincare bundle
$\bar{P}_{mn}$ on $\M_n\times \M_m$. Note that it is no more a line bundle on $\M_n\times_{\PP^2} \M_m$ but a CM sheaf whose fibers jump along the singular set of the fibers. We can however act similarly to \cite[\S 2]{ADM} and prove the following claim which implies directly Theorem \ref{Main}.
\begin{claim}\label{main} If the Picard rank of $\M_n$ (and $\M_m$) is $2$ then the categories $D(\M_m,\alpha_m^n)$ and $D(\M_n,\alpha_n^m)$ are equivalent.

\end{claim}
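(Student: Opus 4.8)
The plan is to prove that the Fourier--Mukai functor attached to the kernel $\bar P_{mn}$ is an equivalence, adapting the argument of \cite[\S 2]{ADM} to the twisted setting. Pushing the twisted Poincaré sheaf $P_{mn}$ of Proposition \ref{twistedFM} forward along the closed embedding $\M_n\times_{\PP V}\M_m\hookrightarrow \M_n\times \M_m$ produces a twisted kernel $\bar P_{mn}$ which defines, in the standard way, a Fourier--Mukai functor
\[
\Phi=\Phi_{\bar P_{mn}}\colon D(\M_m,\alpha_m^n)\to D(\M_n,\alpha_n^m).
\]
Since both $\M_n$ and $\M_m$ are hyper-K\"ahler, hence have trivial canonical bundle, it suffices to prove that $\Phi$ is fully faithful: a fully faithful Fourier--Mukai functor between Calabi--Yau varieties of equal dimension automatically admits a quasi-inverse, and this remains true in the twisted setting. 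To establish full faithfulness I would verify Bridgeland's criterion on the spanning class given by the (twisted) skyscraper sheaves $k_p$, $p\in \M_m$, that is, check that
\[
\Hom^\bullet(\Phi k_p,\Phi k_q)\cong \Hom^\bullet(k_p,k_q)
\]
for all $p,q$, together with the compatibility of gradings coming from the Serre functors.

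The second step treats the generic locus. Let $U\subset \PP V$ be the open set parametrising smooth curves; over $U$ the fibrations $\M_n|_U\to U$ and $\M_m|_U\to U$ are families of abelian surfaces, torsors under the relative Jacobian, and $\bar P_{mn}$ restricts on each smooth fibre to a Poincaré bundle realising the classical Fourier--Mukai equivalence of the Jacobian with its dual. Consequently, for $p,q$ lying over $U$ the required identity of $\Hom^\bullet$'s follows at once: when $p$ and $q$ lie over distinct points of $U$ the objects $\Phi k_p$ and $\Phi k_q$ are supported on disjoint fibres of $\M_n$ and all $\Hom$'s vanish on both sides, while for $p,q$ in a common smooth fibre the computation reduces to Mukai's theorem for the abelian surface.

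The third step, which I expect to be the main obstacle, is to extend these orthogonality relations across the singular fibres. Here the hypothesis that $S$ has Picard rank $1$ is decisive: since the polarisation $h$ is primitive and generates $\Pic(S)$, every member of $|h|$ is integral, so the singular fibres of $\M_n\to \PP V$ are compactified Jacobians of integral curves. For such curves the derived autoduality results of Arinkin (and Esteves--Kleiman) show that the relative Poincaré sheaf still induces a fibrewise equivalence and sends skyscrapers to simple sheaves with the expected self-$\Ext$ algebra; this is exactly the input that \cite[\S 2]{ADM} uses, now to be applied to the twisted kernel $\bar P_{mn}$. The Picard rank $2$ assumption on $\M_n$ and $\M_m$ guarantees that the Lagrangian fibration is the relative Jacobian fibration with no extra horizontal divisors, so that $\bar P_{mn}$ is a maximal Cohen--Macaulay sheaf, flat over each factor away from the discriminant, and the autoduality statements apply without modification. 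Granting this, Bridgeland's criterion yields that $\Phi$ is fully faithful, hence an equivalence; identifying $\M_n=M_{(0,h,a)}(S,B)$ and $\M_m=M_{(0,h,b)}(S,B)$ via Lemma \ref{relPic} and matching the Brauer classes $\alpha_n^m$, $\alpha_m^n$ with $\beta$ through the map $\varphi$ of Section \ref{fibration} then gives Theorem \ref{Main}.
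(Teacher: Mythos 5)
Your proposal is correct and follows essentially the same route as the paper, whose entire proof is a citation: the claim follows because $\bar P_{mn}$ defines a twisted Fourier--Mukai functor "analogously to \cite[Theorem C]{A}", arguing as in \cite[\S 2]{ADM}, with the Picard rank hypothesis invoked exactly as you use it --- to guarantee that all curves in $|h|$ are integral so that Arinkin's autoduality of compactified Jacobians applies. Your expansion (pushforward of the twisted Poincar\'e sheaf, Bridgeland's criterion on skyscrapers in the twisted setting, fibrewise Mukai duality over smooth fibres and Arinkin's results over singular integral fibres) is precisely the content of the arguments the paper delegates to those references.
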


The Claim follows since $\bar{P}_{mn}$ defines a twisted Fourier-Mukai functor as seen analogously to \cite[Theorem  C]{A}. 
The assumption on the Picard rank of $S$ is needed as \cite{A} consider Jacobians of integral curves. 
\end{proof}


\subsection{Brauer class of universal families}
In order to apply the results from the previous section we need to understand the Brauer classes $\alpha_n$ considered in Proposition \ref{twistedFM}. This is the aim of this section.
We shall discuss the special case of hyper-K\"ahler fourfold $X$ of $K3^{[2]}$-type admitting a BN contraction $X\supset E\to S$ (this is a contraction such that the fiber of $E$ is a $-2$ class of divisibility $1$). 
In this case, $X$ is the moduli space of sheaves on $S$, so from \cite[Table 1]{vGK} there are four types of such contractions.
In each case it is proved that $X$ is isomorphic to the moduli space $M_v(S,B)$ (for some Brauer class $B$ and twisted Mukai vector $v=(2,.,.)$) of twisted sheaves of rank $2$ on the K3 base $S$ of the contraction. Let us denote by $\beta$ the Brauer class on $M_v(S,B)$ that gives the obstruction to the existence of the universal twisted sheaf on $M_v(S,B)\times S$.
Note that the obstruction class in the case of moduli spaces of twisted sheaves is  more difficult to find compared to the case of untwisted moduli spaces. Such description in the general case is a challenging problem.  
\begin{prop}\label{universal} The Brauer class $\beta \in Br( M_v(S,B))$ giving the obstruction to the existence of the universal sheaf on $S\times M_v(S,B)$ is either trivial or has a $B$-lift $\frac{\delta}{2}$ with $\delta$ a $-2$ class of divisibility $2$.

\end{prop}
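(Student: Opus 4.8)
The plan is to treat the four cases of \cite{vGK}, Table 1, in parallel: in each of them $X\cong M_v(S,B)$ for a primitive twisted Mukai vector $v=(2,\xi,s)$ of rank $2$ on the twisted K3 surface $(S,B)$, and $\beta$ is the obstruction to the existence of a universal $B$-twisted sheaf on $S\times X$, measured by the minimal similitude of a quasi-universal family (see \cite{Y} and \cite{Ca1}). First I would record that $\beta$ is $2$-torsion. The order of $\beta$ equals the index of $v$, i.e.\ the gcd of the values $(v,w)$ as $w$ runs over the algebraic part of the twisted Mukai lattice; pairing $v$ against the twisted Mukai vector $(0,0,1)$ of a point yields $(v,(0,0,1))=-\rk(v)=-2$, so this index divides $2$ and $\beta\in Br(X)$ has order dividing $2$. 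This reduces the statement to identifying which $2$-torsion class $\beta$ is.

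Next I would compute a $B$-lift of $\beta$ explicitly. Since the index is $1$ or $2$, there is a quasi-universal $B$-twisted sheaf $\mathcal E$ on $S\times X$ of similitude $\rho\in\{1,2\}$, and by the twisted form of Căldăraru's formula (Theorem \ref{Cardelaru}), used exactly as in the proof of Lemma \ref{proof-div1}, a $B$-lift of $\beta$ is the $X$-component of $-\tfrac{1}{\rho}c_1(\mathcal E)$, taken modulo the ambiguity of Section \ref{brauerz}. Through Mukai's twisted Hodge isometry $\theta_v\colon v^{\perp}\to H^2(X,\ZZ)$ this component is $\tfrac12\theta_v(w)$ for the class $w\in v^{\perp}$ realising $(v,w)=2$. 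The substance of this step is to pin down $v$ and $w$ in each of the four cases; here I would use the rank-$2$ twisted bundle $\U$ with $\PP(\U)$ representing the relevant $2$-torsion Brauer class, as produced by \cite[Table 1 with Proposition 2.7]{vGK} together with the degree computation \cite[Lemma 6.2]{K}, in the same spirit as Lemma \ref{relPic}, and so write down $\tfrac12\theta_v(w)$ on the nose.

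Finally I would identify the class. Since $\beta$ is $2$-torsion, its $B$-lift is well defined only modulo $H^2(X,\ZZ)$ and modulo $\tfrac12\operatorname{NS}(X)$, the $H^{1,1}(X,\ZZ/2)$ ambiguity of Section \ref{brauerz}, so only the transcendental part of $\tfrac12\theta_v(w)$ matters. The explicit computation of the previous step then shows that this transcendental part is, modulo the ambiguity, either $0$, in which case $\beta$ is trivial, or equal to $\tfrac{\delta}{2}$ with $\delta$ a $-2$ class of divisibility $2$. Appealing to the uniqueness Lemma of Section \ref{brauerz}, which guarantees that $[\tfrac{\delta}{2}]$ does not depend on the choice of such $\delta$, we conclude that $\beta$ is either trivial or equal to $[\tfrac{\delta}{2}]$, which is exactly the claimed dichotomy.

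I expect the main obstacle to be the Chern-class (equivalently twisted-Mukai-vector) computation of the second step, and in particular verifying that in the non-fine cases the transcendental part of $\tfrac12\theta_v(w)$ lands precisely in the $\delta$-direction rather than in some other $2$-torsion direction of $H^2(X,\ZZ)/2H^2(X,\ZZ)$. As the surrounding text stresses, the obstruction class for moduli of \emph{twisted} sheaves is genuinely harder to control than in the untwisted case, and the point is that one must carry this out case by case for the four contractions of \cite{vGK} rather than by a single uniform formula.
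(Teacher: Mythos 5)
Your opening step is sound: pairing $v=(2,\xi,s)$ against the twisted Mukai vector $(0,0,1)$ of a point gives $-2$, so the obstruction class is at most $2$-torsion (only the ``divides'' direction is needed, which is standard). But after that the proposal has a genuine gap: the decisive step --- showing that the half-integral part of the $B$-lift lies precisely in the $\delta$-direction, a $-2$ class of divisibility $2$, rather than in some other nonzero class of $\tfrac12 H^2(X,\ZZ)/H^2(X,\ZZ)$ modulo the ambiguity --- is exactly what you defer to an unexecuted ``explicit computation'' of $\tfrac12\theta_v(w)$ in four cases, and you flag it yourself as the main obstacle. That identification \emph{is} the content of the proposition, so what you have is a plan, not a proof. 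Two technical points in the plan are also off as stated: a class $w$ cannot simultaneously lie in $v^{\perp}$ and satisfy $(v,w)=2$ (the standard description is rather the homomorphism $T_X\to\ZZ/2$, $t\mapsto(\theta_v^{-1}(t),w)\bmod 2$, or uses the projection of $w$ to $v^{\perp}$); and Theorem \ref{Cardelaru} is a statement about a twisted sheaf on a \emph{family} whose central fibre untwists --- it computes the Brauer class of a deformation, which is why it applies in Lemma \ref{proof-div1}, but it gives no formula for the obstruction class of a quasi-universal family on the fixed product $S\times M_v(S,B)$; for that one would need C\u ald\u araru's thesis \cite{Ca1} or computations as in \cite{MM}, and in the twisted setting this is precisely what the surrounding text calls a challenging open problem.

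The paper closes this gap by a different, geometric mechanism, with no twisted Mukai-lattice computation at all. It restricts the universal family to $S\times E$, where $q\colon E\to S$ is the conic bundle given by the BN contraction, constructs that restriction explicitly (fibrewise an elementary transformation of $\oo_E^{\oplus 2}$ at a point, via the evaluation map on $\pi^{\ast}\E$ for $E=\PP(\E)$), and concludes that the obstruction class restricts trivially to $E$ and pairs evenly with the fibre class $e$. It then chooses a $B$-lift $R$ lying in the transcendental part with $\varphi(2R)\in H^2(E,2\ZZ)$, and invokes the lattice facts of \cite[\S 4.3 and Proposition 4.4]{vGK}: the composition $r\colon E^{\perp}\subset H^2(X,\ZZ)\to q^{\ast}H^2(S,\ZZ)\to H^2(S,\ZZ)$ has image of index $2$, characterised by integral pairing with the conic-bundle class $C$. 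This index-$2$ dichotomy forces the lift to be either integral (trivial $\beta$) or congruent to $\tfrac{\delta-e}{2}$, hence to $\tfrac{\delta}{2}$ modulo $\Pic(X)$ since $e\in\Pic(X)$. If you want to salvage your route, you must actually carry out the case-by-case determination of $v$, $w$ and $\tfrac12\theta_v(w)$ for all four rows of \cite[Table 1]{vGK} and verify the $\delta$-alignment in each; as it stands, the conclusion is asserted where it needs to be proved.
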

\begin{proof} 
The projectivisation of the universal family $\mathcal U$ can be seen as a conic bundle over $S\times M_v(S,B)$.
From \cite[\S 1]{vGK} $q\colon E\to S $ is a conic bundle over $S$ with Brauer class $C$ (such that the pull-back of $C$ on $E$ is an integral class).

We first describe the restriction $\mathcal U_p$ of the bundle $\mathcal U$ to $E$ over a general point  $p\in S$.
We shall prove that $\mathcal U_p$ is a vector bundle of rank $2$ such that the restriction of the Brauer class $\alpha_0$ to $E$ is trivial. 

We construct explicitly the universal family over $S\times E$. 
Indeed, denote by $\E$ the twisted sheaf such that $E=\PP(\E)$. Then we have $$\PP(\E)\xleftarrow{p} \PP(\E)\times S\xrightarrow{\pi} S.$$ 
We consider $\pi^{\ast} (\E)$ and the twisted line bundle $\oo_E(1)$ with trivial Brauer class.
Now consider the natural embedding $\bar{E}\subset E\times S$ such that $$\bar{E}=\{ (P,q(P))\in E\times S  \}.$$ The map $\pi^{\ast} (\E) \to \oo_{\bar{E}}(1)$ is given by the diagonal of the fiber product $E\times_{S} E$ (indeed a point $Q$ over $P$ on $E$ in the fiber of $\PP(\pi^{\ast}\E)$ on $E$ gives a map $\pi^{\ast}\E\to \oo_P\to 0$). Thus the restriction of the universal family to a fiber over $P\in S$ is an elementary transform of $2\mathcal{O}_E$ at a point.

Finally, let $B$ be the Brauer class on $X$ giving the obstruction to the existence of the twisted universal family on $S\times X$.
Let us prove that it is trivial.
Let $e$ be the class of a fiber on $E$. We deduce as in \cite[Lemma 6.2]{K} that $e.B=0$ (for some $B$-lift $B$) since the class of $e$ in $H_2(X,\ZZ)\subset H^2(X,\ZZ)$ is $E$.
In fact since $B$ is represented by a $\PP^1$ bundle it follows that $B.e$
is even.

From the universal property we infer that the  restriction of the universal twisted family on $S\times X$  to $S\times E$ is the above constructed family.
Consider the injective composed map 
$$r: H^2(X,\ZZ)\supset E^{\perp}\to \{A \in H^2(E,\ZZ)| \ A.f=0  \} \subset H^2(E,\mathbb Z)\to H^2(S,\ZZ).$$

From \cite[\S 4.3]{vGK} the image is of index $2$ characterised as the classes that have integer intersection with $C$ and moreover, $$\{A \in H^2(E,\ZZ)| \ A.f=0  \}=q^*H^2(S,\ZZ).$$
Next, $ H^2(X,\mathbb C)\to H^2(E,\mathbb C)$ is an isomorphism of Hodge structures that induces an embedding $$\varphi \colon H^2(X,\ZZ)\to H^2(E,\ZZ).$$ 
We claim that we can choose a $B$-lift $R\in H^2(X_0,\ZZ/2)$ of $B$ such that $R \in T_{X_0}\otimes \QQ$ and such that the image $\varphi(2R)=2t\in  H^2(E,2\ZZ)$. Indeed, suppose that for a $B$-lift $T$ we have $$\varphi(2T)=2t+K_E+\bar{H}\in H^2(E,2\ZZ)+\Pic(E)$$ where $\bar{H}$ is a pull back of a divisor from $S$ to $E$ (this holds because $T$ restricted to $E$ is the trivial Brauer class). Then $2T=2R+E+H$ such that $H$ is in $E^{\perp}\subset \Pic(X)$ and induces $\bar{H}$ on $S$. Then $$2R\in T_X,\ \varphi(2R)=2t$$ and $R$ is another $B$-lift of $T$. This prove the claim.

Let $g\in H^2(S,\ZZ)$ be such that $q^{\ast}(g)=t$.
Now if $g\in \im(r)$ then $r(R)=g$ since $r$ is an injection so the Brauer class of $R$ is trivial.
If $g$ is not in $\im(r)$ then it is a combination from the proof of \cite[Proposition 4.4]{vGK} with the notation there that $g=\frac{1}{2}r(\delta-e)+d$ 
where $d\in \operatorname{im}(r)$ ($g=v^*+s^*$ modulo $\operatorname{im}(r)$ in the notation from \cite[Proposition 4.4]{vGK} where $$r(\delta-e)=r(v-2v^{\ast}-s+2s^{\ast}) = 2(v^\ast +s^{\ast})$$
thus $\frac{\delta-e}{2}$ is a $B$-lift of the Brauer class of $B$ but $e\in \Pic(X)$.
\end{proof}
In the case of double EPW sextics admitting a BN contraction we shall describe the Brauer class explicitly in Proposition \ref{uni}.


\subsection{Duality between double EPW sextics and double EPW quartics}\label{EPWquartic}

Let us now illustrate the new constructions in the case discussed at the beginning of the section of a pair of \HK{} fourfolds of $K3^{[2]}$-type with Picard lattices $U(2)$ and $(2)\oplus (-2)$; so a pair of a very general double EPW quartic and a special double EPW sextic. Those examples have in general Picard rank $2$ and, as was discussed before, we cannot obtain the derived equivalence using Claim \ref{main}.

Recall that a double EPW quartic is a $K3^{[2]}$-type manifold with Picard lattice containing $U(2)$. In this case the map given by the sum of generators gives a $2:1$ map to a special quartic section of the cone over the Segre product $\PP^2\times \PP^2$ in $\PP^9$, see \cite{IKKR}. A double EPW sextic is a \HK{} fourfold of $K3^{[2]}$-type with polarisation of degree $2$. The polarisation gives in general a $2:1$ map to a special sextic hypersurface in $\PP^5$ called an EPW sextic (see \cite{O1}).   

Let $(S,h)$ be a very general K3 surface of degree $2$.
The polarisation $h$ of degree $2$ gives a $2:1$ map $S\to \PP^2$ branched along a curve of degree $6$.
We consider $X_i=M_{v_i}(S,B)$, the moduli spaces of twisted sheaves with Mukai vectors $$v_0=(0,h,1)\ \text{and}\ v_1=(0,h,0),$$ where $B'$ is a $B$-lift of a corresponding Brauer class on $ T$. Here $B\in H^2(S,\mathbb{Z})$ is a $B$-field lift of a Brauer class $\beta$ 
determined by the property that it has a $B$-lift with the properties $Bh=0$, $B^2=0$. 
It follows from Lemma \ref{relPic} that $X_0$ (resp.~$X_1$) can be seen as the relative $\Pic_0$ (resp.~$\Pic_1$) over the twisted fibration $\C\to \PP^2$ with Brauer class corresponding to $B$ through \cite[Lemma 7.3(4)]{Mar2}.

It follows from \cite[Theorem  5.1]{CKKM} that the fourfold
$X_1=M_{v_1}(S,B)$ is isomorphic to a double EPW quartic.
In particular, it admits two Lagrangian fibrations and we can associate to it a second K3 surface $(T,t)$ of degree $2$ such that
$X_1=M_{(0,t,0)}(T,B)$. Let us denote by $X_0'$ the moduli space of twisted sheaves $M_{(0,t,1)}(T,B')$.  

\begin{lem}
The fourfold $X_0$ is a double EPW sextic with Picard lattice $(2)\oplus (-2)$ such that the class $(-2)$ is of divisibility $1$ (it is a class of a BN contraction). The fourfold $X_0'$ is the dual to the double EPW sextic $X_0$. 
\end{lem}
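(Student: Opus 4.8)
The plan is to prove both assertions by computing in the twisted Mukai lattice $\widetilde H(S,B,\ZZ)$, and then to deduce the statement about $X_0'$ by a symmetric argument together with a period comparison that identifies the EPW duality. First I would determine the $B$-twisted algebraic lattice $\widetilde H^{1,1}(S,B,\ZZ)$. Writing $h=e+f$ in a hyperbolic plane with $h^2=2$ and taking $\beta_0$ to be the primitive isotropic class in $T_S$ with $B=\tfrac12\beta_0$, an integral class $(r,D,s)$ is algebraic exactly when $\bigl(D-\tfrac r2\beta_0\bigr)\cdot\sigma=0$ for the very general period $\sigma$; integrality then forces $r$ to be even and $D=\alpha h+\tfrac r2\beta_0$. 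Hence $\widetilde H^{1,1}(S,B,\ZZ)$ is the rank-three lattice spanned by $P_1=(0,h,0)$, $P_2=(0,0,1)$ and $P_3=(2,\beta_0,0)$, isometric to $\langle2\rangle\oplus U(2)$. As a consistency check, $\operatorname{NS}(X_1)=v_1^\perp\cap\widetilde H^{1,1}=\langle P_2,P_3\rangle\cong U(2)$, matching the double EPW quartic of \cite{CKKM}. For $v_0=(0,h,1)=P_1+P_2$ one finds $\operatorname{NS}(X_0)=v_0^\perp\cap\widetilde H^{1,1}=\ZZ(P_1+P_3)\oplus\ZZ P_2$, whose intersection form is isometric to $\langle2\rangle\oplus\langle-2\rangle$, with degree-$2$ class $Q_1=P_1+P_3$ and $-2$ class $\delta=P_1+P_2+P_3=(2,\,h+\beta_0,\,1)$.

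The heart of the first assertion is the divisibility of $\delta$ in $H^2(X_0,\ZZ)=v_0^\perp$. For $w=(r,D,s)\in v_0^\perp$ one has $r=h\cdot D$, so that $\langle\delta,w\rangle=\beta_0\cdot D-2s$; since $\beta_0$ is primitive, $\beta_0\cdot D$ already attains the value $1$, whence $\operatorname{div}(\delta)=1$. In the untwisted case $\beta_0=0$ gives $\langle\delta,w\rangle=-2s$ and divisibility $2$, so the twist is precisely what turns the Hilbert--Chow class into a BN class. A $-2$ class of divisibility $1$ is a BN contraction class, and $Q_1$, which the same kind of computation shows to have divisibility $1$, realises $X_0$ as the double cover of an EPW sextic. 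This gives the first assertion.

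For the second assertion I would first repeat the computation verbatim with $(T,t,B')$ in place of $(S,h,B)$ to conclude that $X_0'=M_{(0,t,1)}(T,B')$ is again a double EPW sextic with Picard lattice $\langle2\rangle\oplus\langle-2\rangle$. It then remains to see that $X_0'$ is the \emph{dual} of $X_0$ rather than an arbitrary EPW sextic. For this I would use that $X_1$ is the double EPW quartic attached to a single Lagrangian $A\subset\wedge^3V_6$ (\cite{CKKM}, \cite{IKKR}): its two Lagrangian fibrations, exhibiting $X_1$ as $M_{(0,h,0)}(S,B)$ and as $M_{(0,t,0)}(T,B)$, correspond to the two projections of the Segre product $\PP^2\times\PP^2$ and hence to the dual pair $Y_A$, $Y_{A^\perp}$, so that the two non-trivial torsors $X_0$ and $X_0'$ are the associated double EPW sextics over the two sides. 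To phrase this within the paper's Hodge-theoretic framework, I would instead identify $T_{X_0}$ and $T_{X_0'}$ with the common twisted transcendental lattice of $X_1$ and check that, after suitable markings, the resulting Hodge isometry $T_{X_0}\to T_{X_0'}$ is the reflection $\rho_{\delta-e+f}$ through the $-4$ class that O'Grady identifies with the EPW-duality involution, exactly as in the baby case treated above.

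The main obstacle is this second assertion. The lattice computation underlying the first is routine once the twisted integral structure is pinned down, but matching $T_{X_0}\to T_{X_0'}$ with the duality involution requires tracking how the passage between the two Lagrangian fibrations of $X_1$ --- equivalently between $(S,B)$ and $(T,B')$ --- acts on the twisted Mukai lattices and on the chosen markings. Arranging the marking on the $T$-side so that the relating isometry is exactly $\rho_{\delta-e+f}$, and not merely some isometry in its $O(\Lambda)$-orbit, is the delicate point.
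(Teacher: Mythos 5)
Your first assertion is handled correctly and completely: the computation of $\widetilde H^{1,1}(S,B,\ZZ)\cong\langle 2\rangle\oplus U(2)$, the identification $\operatorname{NS}(X_0)=v_0^\perp\cap\widetilde H^{1,1}\cong\langle 2\rangle\oplus\langle -2\rangle$ with $\delta=(2,h+\beta_0,1)$, and the divisibility count $\langle\delta,w\rangle=\beta_0\cdot D-2s$ (using $r=D\cdot h$ on $v_0^\perp$, with $\beta_0$ primitive) are all right, and your untwisted sanity check ($\beta_0=0$ giving divisibility $2$) is a nice touch. In substance this re-derives the entry of \cite{vGK} (Table 1) that the paper simply cites for this half of the lemma, so you are more self-contained there; note only that the step from ``degree-$2$ polarization'' to ``double cover of an EPW sextic'' is itself a theorem of O'Grady that you, like Table 1, are implicitly invoking rather than proving.

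The duality assertion, however, contains a genuine gap: both of your routes stop exactly where the proof has to start. In route (a), the claim that the two Lagrangian fibrations of $X_1$ ``correspond to the dual pair $Y_A$, $Y_{A^\perp}$'' is precisely the statement to be proven, not an input one can quote from \cite{CKKM} or \cite{IKKR}; and in route (b) you yourself isolate the decisive verification --- that the Hodge isometry $T_{X_0}\to T_{X_0'}$ obtained through $X_1$ is, for markings carrying both polarizations to $e+f$, the reflection $\rho_{\delta-e+f}$ of \cite{O1} and not merely some isometry in its $O(\Lambda)$-orbit --- and then declare it delicate without carrying it out. The concrete ingredient that closes this, and which the paper's (terse) proof supplies, is an identification of the $T$-side data inside the $S$-side lattice: the K3 base of the BN contraction on $X_0'$ is $T\cong M_{(2,2B+h,1)}(S,B)$, so the period of $X_0'$ can be computed in the same twisted Mukai lattice $\widetilde H(S,B,\ZZ)$ in which you computed the period of $X_0$, and the two can then be compared against O'Grady's description of the period of the dual sextic in \cite{O1} (\S 1.6). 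Without some such identification tying the marking on the $T$-side to the marking on the $S$-side, your two periods live in a priori unrelated markings and the phrase ``after suitable markings'' cannot be checked, which is exactly the ambiguity-up-to-$O(\Lambda)$ problem you flagged. So: first half complete and essentially the paper's content made explicit; second half the right strategy (indeed the paper's own), but with the pivotal period comparison left undone.
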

\begin{proof}
It is a double EPW sextic from \cite[Table 1]{vGK}.
The K3 surface $T$ is then isomorphic to $M_{(2,2B+h,1)}(S,B)$ and is the base of the contraction on $X_0'$.
We conclude the proof comparing the periods of $X_0$ and $X_0'$ see \cite[\S 1.6]{O1}.
\end{proof}

From Claim \ref{main} we obtain a universal family on $X_0\times X_1$.  It is defined on tubes, i.e.~open subsets $X_0^i\times_{\PP^2} X_1^i$, by line bundles $P_{01}$ that are fiberwise degree $0$.
Let us show that we can glue them in another way by dividing the charts and obtain a line bundle that is no longer degree $0$ on the fibers (cf.~\cite[\S 1]{Y}).
In this way we refine the result of Claim \ref{main} by computing the Brauer twist of the universal family in this case.
\begin{prop}\label{uni}
There exists a universal family on $X_0\times X_1$ which is a sheaf supported on $X_0\times_{\PP^2}X_1$ such that it is fiberwise a translation of the Theta divisor when restricted to the abelian fibers on $X_1$ and restricted to the abelian fibers on $X_0$. 
\end{prop}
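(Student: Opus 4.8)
The plan is to produce the universal family as the determinant-of-cohomology bundle of Proposition \ref{twistedFM} with its two normalising determinants removed, and then to identify this object fibrewise with a relative theta divisor. I would work first over the locus of $\PP V=|h|$ parametrising smooth members $C\in|h|$---these have genus $2$ since $h^2=2$, so $\Pic_1(C)$ carries a principal theta divisor---and keep the local universal sheaves $\F_0^i$, $\F_1^i$ on the chart trivialised by the section $s_i$, setting $\mathcal U^i:=(\det Rp_{23*}(p_{12}^*\F_0^i\otimes p_{13}^*\F_1^i))^{-1}$. Comparing with the formula for $P^i_{01}$, one has $P^i_{01}=\mathcal U^i\otimes u_1^*(\det q_{2*}\F_0^i)\otimes u_2^*(\det r_{2*}\F_1^i)$ up to the factor pulled back from $\C$, and the two extra determinants represent, by Grothendieck--Riemann--Roch and the theorem of the cube, the relative principal polarisations of $X_0\to\PP V$ and $X_1\to\PP V$. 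Dropping them is the precise sense in which, as announced, we reglue to a family that is no longer fibrewise of degree $0$.

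The heart of the argument is the fibrewise identification. Over a smooth $C$ the sheaves $\F_0^i$, $\F_1^i$ restrict to the (twisted) Poincar\'e bundles of $\Pic_0(C)$ and $\Pic_1(C)$, and since $\deg(L\otimes M)=0+1=1=g-1$, Riemann--Roch forces $h^0(L\otimes M)=h^1(L\otimes M)$ to jump exactly along the preimage of the theta divisor under the addition map $a\colon\Pic_0(C)\times\Pic_1(C)\to\Pic_1(C)$. By the standard theory of the determinant of cohomology, in the twisted form of \cite[\S1]{Y}, $\mathcal U^i$ therefore restricts on each fibre to $a^*\oo(\Theta)$ up to a translation depending on $s_i$, that is, to the line bundle of the relative divisor $\Theta=\{(L,M):h^0(L\otimes M)\neq 0\}$. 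Fixing a point of $X_0$ and letting $X_1$ vary, or vice versa, cuts out a translate of the theta divisor, which is precisely the behaviour on the abelian fibres of both factors required by the statement.

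Next I would globalise. Tracking the transition bundles $\mathcal L_{ij}^{\M,\bullet}$ exactly as in the proof of Proposition \ref{twistedFM}, but now for $\mathcal U^i$ in place of $P^i_{01}$, shows that the $\mathcal U^i$ glue to a (possibly twisted) sheaf on $X_0\times_{\PP V}X_1$; here one subdivides the charts so that the sections $s_i$ avoid the relative theta divisor, which is what lets the local pieces of $\Theta$ be patched. Pushing forward along the closed embedding $X_0\times_{\PP V}X_1\hookrightarrow X_0\times X_1$ then yields the universal family of the statement, supported on the fibre product and fibrewise a translate of $\Theta$. Its Brauer twist differs from that of the Poincar\'e family of Claim \ref{main} precisely by the twists of the two relative theta bundles $\det q_{2*}\F_0^i$ and $\det r_{2*}\F_1^i$ removed in the first step.

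The main obstacle I expect lies entirely away from this formal smooth-fibre construction and is twofold. Over the discriminant the relative Picard schemes degenerate and the theta divisor becomes singular, so the glued sheaf is only Cohen--Macaulay with jumping rank, exactly as already recorded in Claim \ref{main}, and one must verify that the reglued object extends across the discriminant while keeping its fibrewise description. More importantly for the aim of the section, one must decide whether the two relative theta bundles on $X_0$ and $X_1$ exist as honest line bundles over $\PP V$ or only as twisted ones: this is exactly the obstruction class of Proposition \ref{universal}, and pinning it down---rather than the mere existence of the universal family---is the delicate point.
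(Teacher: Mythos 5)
Your fibrewise analysis is correct and your local construction is a reasonable starting point: the determinant of cohomology $\mathcal U^i$ with the two normalising determinants dropped does restrict, over a smooth genus-$2$ curve $C$, to $a^*\oo(\Theta)$ up to translation, hence to a translate of the theta divisor on each abelian fibre of either factor. But your proposal stops exactly where the actual content of the proposition begins. What the statement asserts --- and what the corollary after it and Lemma \ref{po} use --- is an honest, untwisted sheaf on $X_0\times X_1$; the sentence preceding the proposition makes clear that its purpose is to show the degree-$0$ Poincar\'e gluing of Claim \ref{main} can be reglued with trivial twist. Your gluing step only yields ``a (possibly twisted) sheaf'', and your final paragraph concedes that deciding the twist is ``the delicate point'' --- i.e.\ you defer precisely the claim to be proved. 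Moreover, the fix you suggest (subdividing the charts so that the sections $s_i$ avoid the relative theta divisor) cannot work: the obstruction is not about positions of sections but about monodromy/gerbe data. Concretely, the theta correspondence is only canonical on the EPW quotients: the global duality divisor $\bar D\subset Y_0\times_{\PP^2}Y_1$ pulls back to a divisor $D\subset X_0\times_{\PP^2}X_1$ which on each smooth fibre splits into two translate components, and one must exhibit a global, monodromy-invariant labelling of these two components; refining an \'etale cover never removes a $2$-torsion obstruction of this kind. Nor does Proposition \ref{universal} ``pin it down'' as you suggest: it only narrows the class to trivial or $[\frac{\delta}{2}]$, and it concerns the universal sheaf on $S\times M_v(S,B)$, a different family.

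The paper's proof is devoted almost entirely to this missing step, and by a quite different route: it constructs the choice of component geometrically, via the line bundle $\mathcal L$ on an open subset of $E\times_{\PP^2}X_1$ given by the incidence $\{(l,c)\,:\,l\cap c\neq\emptyset\}$ of lines and $(1,1)$-conics on the Verra fourfold (here $E$ is the BN-contraction divisor of $X_0$, which exists globally over the K3 surface $S$); it then extends this choice over all of $X_0\times_{\PP^2}X_1$ using Catanese's self-duality of Kummer quartics, induced by $u\mapsto\Theta+u$ and fibred over $\PP^2$ via \cite{IKKR}, checking that the two constructions induce the same duality divisor so that the local splittings $D^1_{U_i}$ can be chosen compatibly on overlaps through $E$; finally it extends across the singular fibres as a CM sheaf by pushforward, using \cite[Theorem A]{A}. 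Until you supply a substitute for this global selection argument, your construction proves at best a twisted variant of the statement, which suffices neither for the untwisted derived equivalence $D(X_0)=D(X_1)$ drawn from the proposition, nor for the explicit fibrewise description that Lemma \ref{po} needs to compute $c_1(F_0^p)$.
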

\begin{proof}
    First let us construct a line bundle on $$D_f=f\times_{\PP^2}X_1\subset X_0\times_{\PP^2}X_1$$ where $f$ is an exceptional fiber on $E$.
     The fiber $f$ corresponds to a point $P$ in $S$. Moreover, from \cite{vGK} a point on $f$ corresponds to a line in the quadric fiber over $P$ in the Verra fourfold $V$ corresponding to $X_0$ (such that $X_1$ is the base of a $\PP^1$ fibration on the Hilbert scheme of $(1,1)$ conics contained in $V$). From \cite{IKKR} the point $P$ gives also a line in $\PP^2_2$. The preimage of this line through $\pi_2$ on $D_f$ splits
    into two divisors. Indeed, for a fixed line through $P$ we consider the del Pezzo $K_4$ surface of degree $4$ in $V$ obtained by fixing a line in the image of the other projection on $V$. The $(1,1)$ conics contained in $K_4$ split in two types according to which line in the preimage of $P$ on $K$ they intersect. We denote by $L_f\subset D_f$ the divisor of $(1,1)$ conics that intersect the line from the ruling of $f$ on $V$. In fact $D_f=D_{\bar{f}}$ where $\bar{f}$ is the fiber mapping to the same point through the cover $S\to \PP^2$.

   We define in this way a line bundle on the sum $$
   \bigcup_{f\subset E} L_f\subset  \bigcup_{f\subset E} f\times_{\PP^2} D_f\subset X_0\times_{\PP^2}X_1.$$
We obtain a bundle $\mathcal{L}$ on an open subset of $E\times_{\PP^2} X_1$ induced by the following incidence:
$$\{ (l,c)| \ l \in Hilb_{(1,0)} V, \ c\in Hilb_{(1,1)} V, \ l \cap c \neq \emptyset   \}.$$
  In fact it is naturally defined on smooth fibers and we extend it by flatness.
  
  In order to extend it to the whole product $X_0\times_{\PP^2}X_1$
  we consider the projective duality between the Kummer surfaces $K_1\subset \PP^3$
  and $K_2\subset (\PP^3)^{\vee}$ (see \cite{Cat}). 
  From \cite{IKKR} we can extend this duality to the whole Lagrangian fibration.
  Indeed, let us denote by $Y_0\subset \PP^5$ and $Y_1\subset \PP^9$ the corresponding to $X_0$ and $X_1$ EPW sextic and EPW quartic. 
  After blowing up the singular plane in $Y_0\subset \PP^5$ we infer a fibration of Kummer surfaces in $$\PP(\oo_{\PP^2}(-1)^3\oplus \oo_{\PP^2}).$$
  On the other hand $Y_1$ is described as in \cite[Proposition 3.8]{IKKR} as a dual Kummer surface fibration in the dual bundle $\PP(\oo_{\PP^2}(1)^3\oplus \oo_{\PP^2})$ (i.e.~a small resolution of the cone over the Segre product $C(\PP^2\times \PP^2)$).
   With the notation from \cite{IKKR} suppose that $$A\cap G(3,V_6)={[U_1]}\ \text{and}\ U_1\oplus U_2=V_6.$$
  As observed in \cite[Remark 3.10(1)]{IKKR} the Kummer surfaces are fiberwise dual.
  We observe that the plane $K_1\subset \PP^3\supset \PP_E^2$ that is the base of $E$ is dual to the vertex of the cone over the Segre product naturally contained in the projective space $\PP^3_3\supset K_2$.

  We claim now that the line bundles on $E\times_{\PP^2} X_1$ considered above induce the same projective duality on $Y_0\times_{\PP^2} Y_1$. 
  Indeed, by \cite[Corrolary 9]{Cat} this duality is naturally induced by a correspondence of the shape $u \to \Theta +u$.
Following the construction of $L_f\subset D_f\cap A$ we infer that $L_f$ is a component of the preimage
of a the line $l_P\subset \PP^2_3$. The same line is obtained by the duality of the Kummer surfaces such that a point $P$ in $\PP^2_E$ gives rise to plane in $\PP^3_3$ passing through the vertex and projecting to $l_P$ from this vertex.
We can perform the above in family by pulling back the bundle inducing the duality and infer two choices of a Poincare bundle on an open subset of $X_0\times_{\PP^2} X_1$.

Finally,  extend the line bundle from $E\times_{\PP^2} X_1$ to a line bundle on $X_0\times_{\PP^2} X_1$.
We consider tubes $X_0\times_{U_i} X_1$ where $U_i\subset \PP^2$ is an open cover of the set of bases of smooth fibers of the Lagrangian fibration $X_0\to \PP^2$. 
The duality between the Kummer surfaces is defined globally so it induces a divisor $\bar{D} \subset Y_0 \times_{\PP^2} Y_1$.
For a small enough open subset $U\subset \PP^2$ this effective divisor pull back to $D$ on $X_0\times_{U} X_1$ and splits as a sum of two effective divisors on the central fibers but also on $$X_0\times_{U} X_1\to U$$  we choose a  local section of the fibration  $D\to U$ and consider the component in the fiber intersecting the section. 
We obtain the splitting $D_U^1\cup D_U^2$ of the pull back $D \subset X_0\times_{\PP^2} X_1$ globally by considering the bundle $\mathcal{L}$ on an open subset of $E \times_{\PP^2} X_1$; this gives the choice of the lifting on a general tube $X_0\times_{U} X_1\to U$ 
(on the intersection $U_1\cap U_2$ we choose $D^1_{U_1}$ and $D_{U_2}^!$ such that they agree on $E\times_{U_1\cap U_2} X_1$).



By considering \cite[Theorem A]{A} and acting locally on $\PP^2$, we extend it by pushforward by  obtaining a CM coherent sheaf (no more a vector bundle).
\end{proof}
A direct consequence of the above and Theorem \ref{Main} is the following result.
\begin{corr}
  A very general double EPW quartic is derived equivalent to a double EPW sextic with Picard lattice $(2)\oplus (-2)$ with $(-2)$ of divisibility $1$.
\end{corr}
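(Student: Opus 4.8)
The plan is to read the corollary off from the twisted derived equivalence of Theorem \ref{Main} once the Brauer twist has been shown to vanish by means of Proposition \ref{uni}. First I would recall the two identifications set up above for $(S,h)$ very general of degree $2$: the sextic side is $X_0=M_{(0,h,1)}(S,B)$, which is the relative $\Pic_0$ of the genus-$2$ fibration $\C\to\PP^2$ and, by the preceding lemma together with \cite[Table 1]{vGK}, is a double EPW sextic whose Picard lattice is $(2)\oplus(-2)$ with the $(-2)$-class of divisibility $1$; the quartic side is $X_1=M_{(0,h,0)}(S,B)=\Pic_1(\C)$, which by \cite[Theorem 5.1]{CKKM} is isomorphic to a double EPW quartic. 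As $S$ ranges over very general degree-$2$ surfaces, $X_1$ is a very general double EPW quartic, so the statement reduces to producing an untwisted equivalence $D^b(X_0)\simeq D^b(X_1)$.

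Next I would apply Theorem \ref{Main}: both $X_0$ and $X_1$ are moduli spaces $M_{(0,h,a)}(S,B)$ of twisted sheaves on the single twisted K3 surface $(S,\beta)$ with $S$ of Picard rank $1$, so Claim \ref{main} provides a (twisted) Fourier-Mukai equivalence whose kernel is the push-forward $\bar P$ of the Poincar\'e-type family on $X_0\times_{\PP^2}X_1$. By the proof of Claim \ref{main} (and Proposition \ref{twistedFM}) the twist of this equivalence is precisely the Brauer class of the chosen universal family on $X_0\times X_1$.

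The decisive input is then Proposition \ref{uni}, which exhibits a universal family on $X_0\times X_1$ that is an \emph{untwisted} coherent sheaf supported on $X_0\times_{\PP^2}X_1$: instead of gluing the fibrewise degree-$0$ Poincar\'e bundles of Proposition \ref{twistedFM}, whose obstruction to globalising is exactly the nonzero Brauer classes $\alpha_n^{-m}\times\alpha_m^{-n}$, one glues using the relative theta divisor on $\Pic_1=X_1$. Because the theta divisor on $\Pic^{g-1}$ is canonical and needs no choice of a section, the local pieces agree on overlaps and the kernel descends to a genuine sheaf; concretely this passes from relative degree $0$ to the odd relative degree $g-1=1$, and the resulting gluing class maps to the trivial Brauer class under $\varphi$ from \eqref{epo}. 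Hence the twists on both factors are trivial and the equivalence of Claim \ref{main} is an honest derived equivalence $D^b(X_0)\simeq D^b(X_1)$, proving the corollary.

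I expect the main obstacle to be exactly the content of Proposition \ref{uni}: checking that the theta-translate gluing yields one globally defined untwisted sheaf rather than merely replacing one nontrivial Brauer class by another. This rests on the canonicity of the relative theta divisor and on the fibrewise projective duality between the Kummer fibrations underlying $X_0$ and $X_1$ from \cite{IKKR,Cat}, which is what allows the two local choices of splitting $D^1_U\cup D^2_U$ to be matched coherently along the exceptional locus $E\times_{\PP^2}X_1$.
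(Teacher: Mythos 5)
Your proposal follows the paper's proof of this corollary essentially verbatim: the paper likewise deduces it directly by combining the identification of $X_0=M_{(0,h,1)}(S,B)$ as the special double EPW sextic (with $(2)\oplus(-2)$ and the $(-2)$-class of divisibility $1$, via \cite[Table 1]{vGK}) and of $X_1=M_{(0,h,0)}(S,B)$ as a very general double EPW quartic (via \cite[Theorem 5.1]{CKKM}) with the equivalence mechanism of Theorem \ref{Main}/Claim \ref{main}, the untwisted Fourier--Mukai kernel being exactly the one supplied by Proposition \ref{uni}. One small caveat: your heuristic that the twist vanishes ``because the theta divisor on $\Pic^{g-1}$ is canonical'' understates the content of Proposition \ref{uni} --- fiberwise the duality divisor splits into two natural components $D^1_U\cup D^2_U$, and the substance of the paper's proof is the coherent global choice of one component via the incidence bundle $\mathcal L$ on $E\times_{\PP^2}X_1$ coming from $(1,1)$-conics in the Verra fourfold and the fiberwise Kummer duality --- but since you invoke that proposition as stated and correctly flag this as the hard step, your corollary-level argument is sound.
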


\subsection{Convolutions of two Fourier-Mukai kernels}
From Proposition \ref{uni} we have two Fourier-Mukai kernels $P_{01}$
and $P_{01}'$ on $X_0\times X_1$ and $X_0'\times X_1$ respectively giving derived equivalences $$D(X_0)=D(X_1)=D(X_0').$$
Using the convolution we infer a rank $8$ sheaf $F_0$ on $X_0\times X_0'$ inducing a derived equivalence $D(X_0)=D(X_0')$.

The  sheaf $F_0$ on $X_0\times X_0'$ is a priori more complicated along a pair of points that are the singular loci of singular fibers of abelian fibrations.
however we can prove the following. Denote by $F_0^p$ the fiber of $F_0$ on $X_0$ for $p\in X_0'$ (resp.~on $X_0'$ for $p\in X_0$).

\begin{lem}\label{VB}
    The sheaf $F_0$ is a  on rank $8$ vector bundle. Moreover, $F_0^p$ is slope-stable and $\E nd(F_0^p) $ hyperholomorphic with respect to all K\"ahler classes on $X_0$ (resp.~on $X_0'$) for each $p\in X_0'$ (resp.~$p\in X_0$).
\end{lem}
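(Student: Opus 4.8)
The plan is to analyse $F_0$ fiberwise over the common base $\PP^2$ of the three Lagrangian fibrations and then to pass to the global behaviour via Verbitsky's theory of hyperholomorphic sheaves, mirroring the argument of Theorem \ref{main markman}. First I would establish local freeness and the rank. Over a general point $t\in\PP^2$ the fibers of $X_0\to\PP^2$, $X_1\to\PP^2$ and $X_0'\to\PP^2$ are abelian surfaces (twisted Jacobians of the genus $2$ curve $C_t$), and by the construction of Proposition \ref{uni} the kernels $P_{01}$ and $P_{01}'$ restrict there to (twisted) theta/Poincar\'e line bundles. Hence $F_0=P_{01}\star P_{01}'$ restricts on each such fiber to the composition of the associated relative Fourier--Mukai transforms, and the transform of a theta translate on an abelian surface is a locally free sheaf concentrated in a single degree. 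Since the composed rational Hodge isometry $H^2(X_0,\QQ)\to H^2(X_0',\QQ)$ is the dual double EPW sextic involution, which (as in the baby case above) is of $4$-cyclic type, the rank is fixed to $2!\cdot 2^{2}=8$, consistent with the rank of the BKR kernel realising that involution. It then remains to check that $F_0^p$ stays a torsion-free sheaf of constant rank $8$ over the discriminant locus of the fibrations; here I would use semicontinuity together with constancy of the Mukai vector, the CM property furnished by Proposition \ref{uni}, and smoothness of $X_0$ to conclude that $F_0$ is a rank $8$ vector bundle.

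Next, for slope-stability I would exploit that on each smooth abelian fiber $A$ of $X_0\to\PP^2$ the restriction $F_0^p|_A$ is the Fourier--Mukai image of a theta translate, hence $\mu$-stable by the preservation of stability under Fourier--Mukai on abelian surfaces. Fiberwise $\mu$-stability already yields $\mu$-stability of $F_0^p$ for K\"ahler classes adapted to the fibration, i.e.\ close to the pull-back of the polarisation of $\PP^2$. I would then read off $c_1(F_0^p)$ from the Mukai vector and compute the discriminant $\Delta(F_0^p)=2r\,c_2(F_0^p)-(r-1)c_1(F_0^p)^2$ with $r=8$, and verify that $\Delta(F_0^p)$ is $SU(2)$-invariant, i.e.\ proportional to the class dual to the Beauville--Bogomolov form; this is the step where the $4$-cyclic theta geometry enters, forcing $\Delta(F_0^p)$ to be of Hodge type $(2,2)$ for every complex structure in the twistor family.

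Finally, to upgrade from one adapted class to \emph{all} K\"ahler classes I would argue exactly as in the proof of Theorem \ref{main markman}. Since $\Delta(F_0^p)$ is $SU(2)$-invariant and $F_0^p$ is $\mu$-stable for one class, $\E nd(F_0^p)$ is hyperholomorphic by Verbitsky \cite{V,V1} and therefore deforms along a twistor path; using \cite[Lemma 8.3]{Mar1} I would deform $X_0$ to the square of a non-algebraic K3 surface, where the absence of algebraic classes makes $F_0^p$ poly-stable with respect to every K\"ahler class and locally free, and then transport both the local freeness and the poly-stability back to $X_0$ by the invariance of cohomology dimensions of poly-stable hyperholomorphic bundles along twistor lines (\cite[Corollary 8.1]{V1}).

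The main obstacle is precisely this last strengthening to all K\"ahler classes. It cannot be obtained from the $r$-cyclic arguments of Section \ref{section r cyclic}, because $X_0$ has Picard rank $2$ rather than $1$, so there are genuine algebraic classes that could a priori destabilise $F_0^p$ for non-adapted K\"ahler classes. This is exactly the hypothesis demanded by the generalisation of Theorem \ref{main markman} recorded in the Remark following it, and the twistor-deformation-to-a-non-algebraic-square technique, combined with the $SU(2)$-invariance of the discriminant, is what makes it accessible.
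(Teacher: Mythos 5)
Your proposal has two genuine gaps, one in each half of the lemma. For local freeness, the step ``semicontinuity together with constancy of the Mukai vector, the CM property \dots to conclude that $F_0$ is a rank $8$ vector bundle'' does not work: semicontinuity only bounds the fibre dimension, and a CM sheaf of generic rank $8$ on a smooth fourfold can perfectly well fail to be locally free along the discriminant locus, which is exactly where the issue lies. The paper instead computes the fibre of $F_0$ at \emph{every} point $(x,x')\in X_0\times X_0'$: it is $H^0$ of the tensor product of the two rank-one sheaves restricted to the schematic intersection $v_1^{-1}(x)\cap {v'}_1^{-1}(x')$ of fibres of the two Lagrangian fibrations on $X_1$; using the description of the EPW quartic as a quartic section of the cone over $\PP^2\times\PP^2$ not passing through the vertex, this intersection is the double-cover preimage of a length-$4$ intersection with a line through the vertex, hence has length $8$ for all $(x,x')$; and by Arinkin's results (checking the transversal nodal and cuspidal singularities of the fibres listed in \cite{S}) the rank-one sheaves on the compactified Jacobians are pushforwards of line bundles from partial normalisations, so locally $F_0$ is the pushforward of a line bundle under a finite map of degree $8$ and therefore locally free of rank $8$ everywhere. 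Your identification of the rank via ``consistency with the BKR rank $n!r^n$'' is a plausibility check, not a proof: nothing in the convolution construction a priori forces its rank to agree with that of a BKR kernel inducing the same rational isometry.

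For stability the central claim of your second paragraph is false: the restriction of $F_0^p$ to a smooth Lagrangian fibre is \emph{not} $\mu$-stable — it is a direct sum of eight pairwise distinct translates of the theta line bundle, one for each of the eight intersection points, so it is only polystable, and a global subsheaf restricting fibrewise to a partial sum of these summands would have the same slope. This is precisely the configuration your adapted-class argument cannot exclude. The mechanism the paper uses, and which is missing from your proposal, is irreducibility of $X_1$: a destabilising subbundle restricts on each fibre to a partial sum of the line bundles, each summand corresponds via the Fourier--Mukai transform to a point of $X_1$, and the subbundle would therefore cut out a proper analytic, Zariski-open subvariety of $X_1$ — impossible. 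This argument (after reducing via \cite[p.~35]{Mar1} to restrictions to $X_0\times\{x\}$) yields slope-stability for \emph{all} K\"ahler classes at once, with hyperholomorphicity then following from \cite[Section 5.6]{Mar1}. Your twistor detour cannot substitute for it: deforming to the square of a non-algebraic K3 gives polystability \emph{there}, but stability of $F_0^p$ with respect to a non-adapted K\"ahler class on $X_0$ is a condition at $X_0$ and is not transported back along the twistor line; in the proof of Theorem \ref{main markman} that deformation is used to control cohomology dimensions of an already-hyperholomorphic endomorphism bundle, not to establish stability at the initial point — and hyperholomorphicity is exactly what your argument has not yet secured.
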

\begin{proof} Let us compute the rank of the fiber of $F_0$ at any chosen point $(x,x')\in X_0\times X_0'$. For that let us first recall the notation. 
We consider the  following diagram:
\[ \xymatrix{&&X_0\times X_0' &&\\
&&X_0\times X_1\times X_0' \ar[ld]_{\pi_{12} }\ar[rd]_{\pi_{23} }\ar[u]_{\pi_{13}}&&\\
& X_0\times X_1& K \ar[u]^{\kappa} \ar[ld]^{\pi_{12}|_K }\ar[rd]_{\pi_{23}|_K } & X_1\times X'_0& \\
& X_0\times_{\PP^2}X_1\ar[ld]_{\pi_0} \ar[rd]^{\pi_1}\ar[u]_{\iota}& & X_1\times_{\PP^2}X'_0\ar[ld]_{\pi_1'} \ar[rd]^{\pi_0'}\ar[u]_{\iota'}& \\
X_0 \ar[rd]^{v_0}& & X_1\ar[ld]_{v_1} \ar[rd]^{v'_1}&
&  X_0' \ar[ld]_{v'_0}\\
& \PP^2& & \PP^2& 
}\]

The sheaves $P_{01}$ and $P'_{01}$ are generically rank 0 sheaves on $X_0\times X_1$ and  $X_1\times X_0'$ respectively obtained as pushforwards by the inclusions $\iota$, $\iota'$ Poincare sheaves $\bar{P}_{01}$ and $\bar{P}'_{01}$ of rank 1 defined on the respective fiber products.
Then $$F_0:= {\pi_{13}}_* (\pi_{12}^* P_{01}\otimes \pi_{23}^* P'_{01})$$
and hence the fiber $${F_0}_{(x,x')}\simeq H^0((\pi_{12}^* P_{01}\otimes \pi_{23}^* P'_{01})|_{\pi_{13}^{-1}(x,x')}).$$
Since $\pi_{12}^* P_{01}\otimes \pi_{23}^* P'_{01}$ has support on $K$ we have 
$${F_0}_{(x,x')}=H^0((\pi_{12}^* P_{01}\otimes \pi_{23}^* P'_{01})|_{(\pi_{13}\circ\kappa)^{-1}(x,x') }).$$
Now $${(\pi_{13}\circ\kappa)^{-1}(x,x')}$$ is isomorphic via $\kappa$ to the subscheme of $(\pi_{13})^{-1}(x,x')\simeq X_1$ obtained as the intersection $v_1^{-1}(x)\cap {v'}_1^{-1}(x')$ and under this identification $\pi_{12}^* P_{01}|_{(\pi_{13}\circ\kappa)^{-1}(x,x') }$ is identified with  the sheaf $P_{01}|_{x\times v_1^{-1}(x)}$ which is itself identified with a sheaf $P_x$ of rank 1 on $v_1^{-1}(x)$. It is a compactified Jacobian of a curve of genus 2, such that $P_x$ is represented by a point in the dual compactified Jacobian. Similarly $P'_{01}|_{v_1^{-1}(x')\times x'}$ is a sheaf $P'_{x'}$ on a compactified Jacobian of a curve of genus 2 represented by an element of its dual compactified Jacobian.

Finally, observe that for every $(x,x')\in X_0\times X'_0$ the schematic intersection  $$v_1^{-1}(x)\cap {v'}_1^{-1}(x')$$ is a scheme of length eight. Indeed, by \cite{IKKR} $X_1$ is obtained as a finite double cover of an EPW quartic, which is a section of the cone over $\mathbb{P}^2\times \mathbb{P}^2$ by a quartic not passing through the vertex. The fibers of both Lagrangian fibrations are then preimages by the double cover of the intersection of the EPW quartic with 3-spaces spanned by some of the planes  in $\mathbb{P}^2\times \mathbb{P}^2$ and the vertex of the cone. The intersection of two fibers from different fibrations is then the preimage of the intersection of the EPW quartic with a line passing through the vertex of the cone. Since the EPW quartic does not pass through the vertex of the cone it cannot contain the line hence its intersection with the line is of length 4. The preimage of the scheme of length 4 by the double cover is a scheme of length 8. 

We have now restricted our computation to the following:
$$A=v_1^{-1}(x)\ \text{and}\ A'=v_1^{-1}(x')$$ are two surfaces that are compactified Jacobians of some possibly nodal curves. On $A$ and $A'$ we have two torsion free sheaves $P$ and $P'$ of generic rank 1. The surfaces $A$ and $A'$ are embedded in the fourfold $X_1$ and intersect in a scheme of length 8. We want to compute the dimension of the space of sections of the tensor product of the restrictions of the two sheaves to the intersection $A\cap A'$.

By \cite{A,S} we observe  that both $P$ and $P'$ are pushforwards of line bundles $\tilde P$ and $\tilde P'$ from respective partial normalisations $\tilde A$ of $\tilde{A}'$ of $A$ and $A'$. 
Indeed, for a torsion free sheaf $F$ on $A$ we consider the scheme $Y=\Spec(\mathcal{H}om(F,F))$ that gives a partial normalisation of $A$.
There exists on $Y$ a sheaf $L$ such that $\mathcal{H}om(L,L)=\oo_Y$ whose push-forward is $F$. It is thus enough to consider sheaves that are Cohen- Macaulay and having the properties $\mathcal{H}om(L,L)=\oo_Y$. We can deduce that such sheaves are locally free on varieties that, have transversal nodal or cuspidal singularites (i.e.~$x.y=0$ and $x^2=y^3$). We conclude by considering the possible singularites on $A$ listed in \cite[\S 3.3]{S} using in the remaining case with the cross singularity \cite[Lemma 2.2]{A} in the central point.

Hence we have the following diagram 
$$ \xymatrix{
\tilde{A}\ar[d]_{r} & \ar[l]_{\tilde p} \ar[r]^{\tilde p'}\tilde{A}\times_{X_1} \tilde{A}' \ar[d]_{\pi}&\tilde{A}'\ar[d]_{r'}\\
A\ar[rd] & \ar[l]_{p} \ar[r]^{p'} A\times_{X_1} A'\ar[d] &A' \ar[ld] \\
   & X_1 & } $$
   
Now by the Leray spectral sequence and projection formula

$$
H^0(p^*({r}_*(\tilde P))\otimes {p'}^*({r'}_*(\tilde P)))=H^0(\pi_*(\tilde p^*(\tilde P)\otimes {\tilde {p'}}^{*}(\tilde P')))=H^0(\tilde p^*(\tilde P)\otimes {\tilde {p'}}^{*}(\tilde P'))=8.
$$
Since it is enough to work locally on $X_0^i\times X_1^j$ the sheaf $F_0$ is a vector bundle as a push-forward of a line bundle through a finite map of degree $8$. 


    The stability of $F_0$ follows from the fact that $X_1$ is irreducible.
Indeed, it follows from \cite[p.~35]{Mar1} that it is enough to show that the restriction $F_0^x$ of $F_0$ is stable on  $X_0\times \{ x\}$
for a general $x\in X_0'$ and on $\{y\} \times X_0'$ for $y\in X_0'$.
Next, we find that the restriction of $F_0^x$ to a fiber of the Lagrangian fibration is a sum of different translations of the Theta divisor.
It follows that
a subbundle $E_0^x$ of $F_x^0$ that could destabilise $F_0^x$ restricts to partial sum of the line bundles before. Since each such line bundle corresponds to a point on $X_1$ through the Fourier-Mukai transform, the subbundle induces an analytic subvariety of $X_1$ that is open in the Zariski topology
so gives a subvariety of $X_1$. So $E_0^x=F_0^x$ and $F_0^x$ is stable.

We conclude from \cite[Section 5.6]{Mar1} that $\E nd(F_0^x)$ is hyperholomorphic.
\end{proof}


In order to study the Brauer twist of deformations of the vector bundle $F_0$ we need to compute its first Chern class.
\begin{lem}\label{po}
The Chern class $c_1(F_0^p)$ restricted to a $\PP^1$ fiber on $E$ is of degree $32$ for $p\in X_0'$. 
\end{lem}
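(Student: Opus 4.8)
The plan is to restrict the rank $8$ bundle $F_0^p$ to the rational curve $f$ and to compute the degree of the resulting rank $8$ bundle on $\PP^1$, since $c_1(F_0^p)\cdot f=\deg_{\PP^1}(F_0^p|_f)$. First I would use the convolution description together with flat base change. Fixing $p\in X_0'$ and setting $Q_p:=P'_{01}|_{X_1\times\{p\}}$, a sheaf on $X_1$, the fibre of the convolution is the Fourier--Mukai transform $F_0^p=R\,\mathrm{pr}_{X_0\,*}(P_{01}\otimes \mathrm{pr}_{X_1}^*Q_p)$, where $\mathrm{pr}_{X_0},\mathrm{pr}_{X_1}$ are the projections from $X_0\times X_1$. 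Restricting along $f\hookrightarrow X_0$ and using base change gives $F_0^p|_f=R\,p_{f*}\bigl((P_{01}\otimes \mathrm{pr}_{X_1}^*Q_p)|_{f\times X_1}\bigr)$ with $p_f\colon f\times X_1\to f$. Here $Q_p$ is the rank $1$ sheaf supported on the Lagrangian fibre $(v_1')^{-1}(v_0'(p))\subset X_1$, so its class is the twisted Mukai vector of a line bundle on a genus $2$ Jacobian surface.

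Next I would apply Grothendieck--Riemann--Roch to $p_f$; since its relative tangent bundle is $q^*T_{X_1}$ for the second projection $q\colon f\times X_1\to X_1$, the sought degree equals $\int_{f\times X_1}\operatorname{ch}\bigl(P_{01}|_{f\times X_1}\bigr)\cdot q^*\bigl(\operatorname{ch}(Q_p)\operatorname{td}(X_1)\bigr)$ in top degree. Writing $\operatorname{ch}(P_{01})|_{f\times X_1}=\sum \mathrm{pr}_1^*a_i\,q^*b_i$ by the Künneth decomposition, only the summands with $a_i$ of degree $2$ on $f\cong\PP^1$ survive $\int_f$, so the computation collapses to the intersection pairing on $X_1$ between the $f$-transverse part of $\operatorname{ch}(P_{01})$ and the Mukai vector of $Q_p$. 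This is exactly where the explicit geometry of Proposition \ref{uni} enters: over the exceptional locus the abelian fibration degenerates, and I would use the description of $P_{01}$ there as a fibrewise translate of the theta divisor, together with the fibrewise Kummer/EPW duality, to read off the relevant components of $\operatorname{ch}(P_{01})$ restricted over $f$.

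The main obstacle is precisely this last point: making the Chern character of $P_{01}$ explicit over the BN-exceptional curve $f$, where the Lagrangian fibration is singular and $P_{01}$ is only a CM sheaf rather than a line bundle, so the naive ``translate of $\Theta$'' description must be upgraded to a controlled statement about $\operatorname{ch}_{\le 2}(P_{01})$ in a neighbourhood of the degenerate fibre. I expect the numerical input to come from the Verra fourfold model of $X_1$ of \cite{IKKR}: a fibre $f$ of $E$ corresponds to a point of $S$, hence to a line through the vertex of the cone over $\PP^2\times\PP^2$, which meets the EPW quartic in a scheme of length $4$; the degree-$2$ double cover turns this into a scheme of length $8$, matching the rank of $F_0^p$, and propagating this through the theta-divisor translation and the GRR computation above yields the degree $8\cdot 4=32$.

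As a consistency check, $c_1(F_0^p)\cdot f=32$ gives $\tfrac18\,c_1(F_0^p)\cdot f=4\in\ZZ$, so by Theorem \ref{Cardelaru} the $B$-lift $\tfrac18 c_1(F_0^p)$ of the deformed twist pairs integrally with the divisibility-$1$ class $f=E$; hence the $E$-component of the Brauer twist is trivial, which is exactly what is needed to conclude a genuine (rather than merely twisted) derived equivalence $D(X_0)=D(X_0')$ after deforming $F_0$ as the hyperholomorphic bundle produced in Lemma \ref{VB}.
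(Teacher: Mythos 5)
Your overall template --- restrict the convolution kernel to $f$, use base change to write $F_0^p|_f$ as a pushforward, then apply GRR --- matches the first move of the paper's argument, but your proof has a genuine gap exactly at the step you yourself flag, and the way you close it does not work. The paper never attempts to control $\operatorname{ch}_{\le 2}(P_{01})$ abstractly over the degenerate locus. Instead it makes the geometry over $f$ completely explicit: the supports of the sheaves parametrised by the points of $f$ sweep out a conic $C\subset \PP^2$, so the only relevant part of $X_1$ is $D=\pi_1^{-1}(C)$; intersecting with the Lagrangian fibre $A_p$ of the second fibration gives a curve $R=D\cap A_p$ with $\pi_1|_R\colon R\to C\cong f$ finite of degree $8$; and the whole fibre of the convolution over $f$ collapses to the pushforward $(\pi_1|_R)_*L$ of the explicit line bundle $L=G_0^f\otimes H_0^p$ on $R$. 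The number is then obtained on curves: $\deg_R H_0^p=8$ because $H_0^p$ is a fibrewise translate of the theta divisor (Proposition \ref{uni}) whose image under $\pi_1$ is a line, $\deg_R G_0^f=0$ because one may choose $p$ so that the relevant point maps off that line, and GRR for the finite map $R\to C$ supplies the correction term. None of these three inputs --- the conic $C$, the degree-$8$ theta computation, the degree-$0$ statement --- appears in your proposal.

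In their place you assert ``$8\cdot 4=32$'', with the $4$ coming from the length-$4$ intersection of a line through the vertex of the cone with the EPW quartic. But that length-$4$ scheme is precisely what, after the double cover, produces the length-$8$ fibres and hence the rank $8$ of $F_0^p$ (this is exactly how it is used in the proof of Lemma \ref{VB}); it cannot be recycled a second time as an independent factor in the degree. Your scheme would be equally ``consistent'' with any multiple of $8$, so the value $32$ is numerology rather than a computation. Two smaller points: in the consistency check you conflate the curve class $f$ with the divisor class $E$ --- they pair as $E\cdot f=-2$ for a conic-bundle exceptional divisor, which is how the paper passes between degree $32$ along $f$ and the coefficient $-16$ of $E$ in $c_1(F_0^p)=aH-16E$ in Proposition \ref{convEPW} --- and integrality of $\tfrac18\,c_1(F_0^p)\cdot f$ alone does not trivialise the Brauer class: by Theorem \ref{Cardelaru} one needs $c_1(F_0^p)$ modulo $8\,H^2(X_0,\ZZ)+\Pic(X_0)$, which is why the paper records the full class rather than a single pairing.
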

\begin{proof}

The fourfold $X_0$ admits a BN contraction (see \cite{vGK}) of a divisor $E$ to a K3 surface. We denote a fiber $\PP^1$ on $E$
by $f$.
Let us compute $c_1(F_0^p)$  
 restricted to $f$. 
 Consider the subset $D\subset X_1$ which is the sum of abelian surfaces corresponding to points $f\subset X_0$.
 It can be seen as the preimage of a conic $C\subset \PP^2$ through the fibration $\pi_1:X_1\to \PP^2_1$ (that is the dual fibration to $\pi\colon X_0 \to \PP^2_1)$.
 The points of $f$ induce fiberwise a line bundle $G_0^f$ on $D$ induced by the universal line bundle $G_0$ on $X_0\times X_1$.
Now denote by $R$ the curve $ D\cap A_p$ where $A_p\subset X_1$ is the corresponding fiber to $p\in X_0'$ and by $C\subset \PP^2_1$ the conic $\pi_1(D)$ such that the induced map $\pi_1|_R\colon R \to C$ is $8:1$.  

We consider the push forward of the bundle $H_0^p$ on $A_p\subset X_1$ (such that $H_0^p$ is the fiber over $p\in X_0'$  of  $H_0$ through the fibration $\pi_1$).  Denote by $L$ the line bundle $G_0^f\otimes H_0^p$. As $F_0$ is the convolution Fourier-Mukai kernels $G_0$ on $X_0\times X_1$ and $H_0$ on $ X_0'\times X_1$. We infer that the restriction of  $F_0^p$ is the push forward of $L$ on $C$ through $\pi_1|_R$.
 
From GRR we find 
that the push forward of $L$ from $R$ to $C$ is of degree  $${\pi_1}_{\ast}c_1(L)= 24+ c_1({\pi_1}_{\ast}(L)).$$

From Lemma \ref{uni} we saw that $L$ is of degree $8$. Indeed, $H_0^p$ is a translation of the Theta divisor on $A_p$ so its image through $\pi_1$ is a line.
Thus the degree of $H_0^p$ is $8$ on $R$.
The line bundle $G_0^f$ is of degree $0$ on $R$.
Indeed, $G_0^f$ is a component of the preimage of a line through $\pi_2$ but we can choose $p\in X_0$ such that it maps outside this line.
\end{proof}

Since $F_0$ is of rank $8$ and is the Fourier-Mukai kernel of a derived equivalence
$D(X_0)\simeq D(X_0')$ it induces a Hodge isometry $$\varphi\colon H^2(X_0,\mathbb{Q})\to H^2(X_0',\mathbb{Q})$$ (see \cite[Section 5.2]{Mar1}).
We consider again as in \cite[\S 5]{Mar1} a connected component of the moduli space 
$\mathcal{M}^0_{\psi}$ that
parametrises isomorphism classes of quadruples
($X_t,\eta_t,X_t',\eta_t')$ of deformation equivalent marked pairs $(X_t,\eta_t)$, where $\eta_t\colon H^2(X_t,\mathbb{Z}) \to \Lambda$ and $\eta_t'\colon H^2(X_t',\mathbb{Z}) \to \Lambda$ are
 isometries, such that 
$$\eta_1^{-1}\psi \eta_0 \colon H^2(X_0,\mathbb{Q})\to H^2(X_0',\mathbb{Q})$$
is a Hodge isometry which maps a K\"ahler class to a K\"ahler class. 
Arguing as in Section \ref{EPW2} and using Lemma \ref{VB} we can spread the bundle $F_0$ to twisted vector bundles $F_t$ on $X_t\times X_t'$.
\begin{prop}\label{convEPW} For a pair ($X_t,\eta_t,X_t',\eta_t') \in \mathcal{M}^0_{\psi}$ we have that if $X_t$ is a double EPW sextic then $X_t'$ is the dual double EPW sextic. Moreover, the bundle $F_t$ is a vector bundle and induces a derived equivalence $D(X_t)\simeq D(X_t')$.
\end{prop}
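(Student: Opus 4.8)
The plan is to treat the two assertions separately. The statement that $X_t'$ is the dual double EPW sextic is purely a matter of periods: the rational Hodge isometry induced by the convolution kernel $F_0$ is, after transport to $\Lambda$ through the markings, the isometry $\psi$ defining $\mathcal{M}^0_\psi$, and at the base point it relates the period of the special double EPW sextic $X_0$ to that of its dual $X_0'$. Since the duality involution on the moduli of double EPW sextics is induced by a fixed autoisometry of $\Lambda$ (as recalled in the baby case and in \cite[\S4.2]{O1}), this isometry must agree with $\psi$; hence for every pair in $\mathcal{M}^0_\psi$ with $X_t$ a double EPW sextic, $\psi$ carries the period of $X_t$ to that of the dual double EPW sextic. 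By the Torelli theorem and the period description of the duality in \cite[\S1.6]{O1}, $X_t'$ is then the dual double EPW sextic.

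For the second assertion I would first observe that Lemma \ref{VB} provides exactly the input needed to run the deformation argument of Section \ref{EPW2} in its strongest form: the fibres $F_0^p$ are slope-stable and $\E nd(F_0^p)$ is hyperholomorphic with respect to \emph{all} K\"ahler classes, which is precisely the hypothesis of the Remark following Theorem \ref{main markman}. Thus $F_0$ spreads, along the twistor paths connecting the points of $\mathcal{M}^0_\psi$, to a (possibly twisted) vector bundle $F_t$ on $X_t\times X_t'$ whose endomorphism Azumaya algebra deforms that of $F_0$, and $F_t$ is the kernel of a twisted derived equivalence $D(X_t,\beta)\simeq D(X_t',\beta')$ with $\beta,\beta'$ the Morita classes of $\E nd(F_t)$.

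It remains to prove that $\beta$ and $\beta'$ vanish on the double EPW sextic locus. Because $F_0$ is an honest bundle, $c_1(F_0^p)\in\operatorname{NS}(X_0)=\langle L\rangle\oplus\langle E\rangle$, where $L$ is the degree-$2$ polarisation and $E$ the divisibility-$1$ class of square $-2$ contracted by the BN contraction; write $c_1(F_0^p)=aL+bE$. By Theorem \ref{Cardelaru}, transported through $H^2(X_0,\ZZ)\simeq H^2(X_t,\ZZ)$ along a twistor line, $\beta$ has $B$-lift $-\tfrac18 c_1(F_0^p)=-\tfrac a8 L-\tfrac b8 E$. The first summand lies in $\tfrac18\Pic(X_t)$ and is harmless, since an algebraic class contributes trivially to the Brauer class. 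For the second I would use Lemma \ref{po}: since the fibre class $[f]$ equals $E$ in $H^2(X_0,\ZZ)$ and $(L,E)=0$, one gets $c_1(F_0^p)\cdot[f]=b\,(E,E)=-2b=32$, so $b=-16$ and $-\tfrac b8 E=2E$ is integral. Hence the $B$-lift lies in $H^2(X_t,\ZZ)+\tfrac18\Pic(X_t)$, so $\beta$ is trivial modulo the ambiguity of Section \ref{brauerz}; the symmetric computation on the $X_t'$ side gives $\beta'=0$, and $F_t$ induces an honest equivalence $D(X_t)\simeq D(X_t')$.

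The main obstacle is this last Brauer-class computation. Everything hinges on Lemma \ref{po}: the precise value $32$ is what makes the coefficient $b=-16$ of the transcendental class divisible by the rank $8$, and only then does the transcendental part $bE$ of $c_1(F_0^p)$ become $8$-divisible, so that $-\tfrac18 c_1(F_0^p)$ falls into $H^2(X_t,\ZZ)+\tfrac18\Pic(X_t)$. Care is also needed to ensure that the lattice identification used is the canonical one along the twistor path supplied by Theorem \ref{Cardelaru}, so that membership in $H^2(X_t,\ZZ)+\tfrac18\Pic(X_t)$, and hence triviality of the Brauer class modulo the ambiguity described in Section \ref{brauerz}, is well defined and independent of the chosen path.
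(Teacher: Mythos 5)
Your second assertion is handled correctly and essentially as in the paper: Lemma \ref{VB} supplies slope-stability and hyperholomorphy with respect to \emph{all} K\"ahler classes, the machinery of Theorem \ref{main markman} (in the form of the remark following it) then yields a twisted equivalence $D(X_t,\beta)\simeq D(X_t',\beta')$, and Theorem \ref{Cardelaru} together with Lemma \ref{po} kills the twists: your derivation of $b=-16$ from the degree $32$, $[f]=E$ and $q(E,E)=-2$ recovers the paper's $c_1(F_0^p)=aH-16E$, so the $B$-lift $-\tfrac18 c_1(F_0^p)=-\tfrac{a}{8}H+2E$ lies in $H^2(X_t,\ZZ)+\tfrac18\Pic(X_t)$, exactly as in the paper's proof.

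The first assertion, however, contains a genuine gap. You argue: $\psi$ carries the period of $X_0$ to that of its dual $X_0'$; the duality involution is induced by a fixed autoisometry $g$ of $\Lambda$; hence $\psi$ ``must agree with'' $g$. This implication fails, because the stabiliser of the period point of $X_0$ inside $O(\Lambda\otimes\QQ)$ is large: $g^{-1}\psi$ is only constrained to preserve the period line of $X_0$, so it may act as an arbitrary rational isometry on the rank-two algebraic part and as any rational Hodge isometry of $T_{X_0}$. Different such $\psi$ define different moduli components $\M^0_{\psi}$ and pair a general degree-two $X_t$ with \emph{different} partners; recall that a very general double EPW sextic and its dual have Hodge-isometric transcendental lattices yet are not isomorphic, and which partner occurs is detected precisely by the induced action on the discriminant/Picard data — the fine form of $\psi$ that your argument never determines. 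The paper closes exactly this gap by exploiting the structure of $F_0$ as a convolution through $X_1$: writing $\psi$ on the Picard lattices $(2)\oplus(-2)$ with coefficients subject to $2a^2-2b^2=2$ and $2c^2-2d^2=-2$, it uses \cite[\S 10.2]{B} to see that each of the two constituent equivalences sends the point class to the class of the Lagrangian fibration, forcing $\psi(h-s)=h'-s'$, whence $c=a-1$, $d=b+1$ and then $a=d=1$, $b=c=0$. Only after $\psi$ is thus identified (up to markings) with the reflection through $e-f-\delta$ does \cite[Section 4.2]{O1} apply to conclude that $X_t'$ is the dual double EPW sextic for \emph{every} $t$. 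To repair your proof you must insert this computation, or some substitute determining $\psi$ on the algebraic part; the period-level observation at the single base point does not suffice.
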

\begin{proof}
Since, $X_0$ and $X_0'$ are dual double EPW sextics we can describe the Hodge isometry $\psi$ explicitly. 
The map $\psi$ is a Hodge isometry so $$\psi(h)=ah'-bf' \ \text{and}\ \psi(e)=ch'-de'$$ such that $2 a^2-2b^2=2$ and $2c^2-2d^2=-2$ and $a,b,c,d \in \QQ$. 
By choosing appropriate markings we can assume that $\psi$ is given by an automorphism of $\Lambda$ that preserves the lattice generated by $e+f$ and $s$, is the reflection that interchanges $e-f$ and $\delta$ on the transcendental lattice (as it maps our special double EPW sextics to their dual, see \cite[Section 4.2]{O1}). Let us study how $\psi$ acts on the Picard lattice.

Now $\psi$ is the composition of two equivalences $$D(X_0)\to D(X_1)\ \text{and} \ D(X_1)\to D(X_1').$$
From \cite[\S 10.2]{B} the image through the map induced by those equivalences on second cohomologies of a point on $X_1$ is the class $t$ (resp.~$t'$) on $X_0$ (resp.~$X_0'$) where $t\in H^2(X_0,\ZZ)$ (resp.~$t'\in H^2(X_0',\ZZ)$) is the class of divisors giving the Lagrangian fibrations. If we denote by $h$ and $s$ the generators of $$\Pic(X_0)=(2)\oplus (-2)$$ such that $t=h-s$ (resp.~by $h'$ and $s'$ the generators of $\Pic(X_0')=(2)\oplus (-2)$ with $t'=h'-s'$) we infer $\psi(h-s)=h'-s'$ thus 
$$c=a-1\ \text{and}\ d=b+1.$$ This implies $a=d=1$ and $b=c=0$.
Thus we can assume (after choosing markings) that $\psi$ interchanges $e-f$ and $\delta$, maps $e+f$ to $e+f$ and is the identity elsewhere.
So it is the reflection through the class $e-f-\delta$ and we conclude from \cite[Section 4.2]{O1}) that $X_t'$ is the dual EPW sextic.


In order to show a twisted derived equivalence $D(X_t,\beta_t)\simeq D(X_t',\beta_t')$ (for some Brauer classes $\beta_t,\beta_t'$) we argue as in Theorem \ref{main markman} using Lemma \ref{VB}. To conclude we need to prove the Brauer classes are trivial.
It follows from Theorem \ref{Cardelaru} and the discussion after it that $$B=-c_1(F_0^p)/\rk(F_0^p)$$ is a $B$-lift of $\beta$. It is thus trivial since from Lemma \ref{po} we have $c_1(F_0^p)=aH-16E$ (moreover, we expect $a=4$).
\end{proof}

\section{Summary}\label{convolution}
The strategy described in the previous section can be applied in many cases.
Let us describe several twisted derived equivalences of \HK{} manifolds of Picard rank $\geq 2$.
For a K3 surface $S$ of degree $2$ there are lots of examples of moduli spaces of twisted sheaves $M_{(0,h,0)}(S,B)$ that are \HK{} fourfolds of $K3^{[2]}-$type with two Lagrangian fibrations that differ by the choice of a Brauer class $\beta \in H^2(\oo_S^{\ast})$. 
More precisely, we choose a $k$-torsion Brauer class that admits a $B$-lift of class $B$ with $$Bh=0\ \text{and}\ B^2=0$$
(we can find the $B$-lift as a sum of fractions of the generators of one of the hyperbolic summand of $\Lambda$).
For $k\geq 2$ the manifolds admit two Lagrangian fibrations which are given by $(0,0,1)$ and $(k,kB,0)$ because there are no $-2$ classes or $-10$ classes that are positive combination of the Lagrangian classes. By taking the first fibration and using Lemma \ref{relPic} we infer that the construction from Section \ref{EPW4} of relative twisted Lagrangian fibration gives a fourfold that is the moduli space $M_{(0,h,1)}(S,B)$  that we call $P_k$.
If $k$ is odd the Picard lattice is the following
$$\left( \begin{array}{cc}
 2 k^2    & 2k \\
   2k  & 0
\end{array} \right) $$
If $k$ is even it is $$\left( \begin{array}{cc}
   \frac{k^2}{2}  & \frac{k}{2} \\
     \frac{k}{2} & 0 
\end{array} \right). $$

 If we use Lemma \ref{relPic} for the second Lagrangian fibration on $P_k$ we infer a dual manifold $Q_k'$.
 As in the previous section we can prove the following.
\begin{prop} The manifolds $P_k$ and $Q_k$ are twisted derived equivalent. Moreover, $Q_k$ and $Q_k'$ are twisted derived equivalent.
\end{prop}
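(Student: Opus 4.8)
The plan is to establish the two twisted derived equivalences separately, chaining them through a common fourfold, in exact analogy with the treatment of double EPW sextics and quartics in Section~\ref{EPWquartic}. Both equivalences will ultimately be instances of Theorem~\ref{Main} once the relevant fourfolds are recognised as moduli spaces of twisted sheaves $M_{(0,h,a)}(S,B)$ on a fixed twisted K3 surface of Picard rank $1$; the convolution construction of Proposition~\ref{convEPW} will serve as the tool needed in the second step, where a second base surface and a possibly different Brauer class enter the picture.

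For the equivalence $D(P_k,\cdot)\simeq D(Q_k,\cdot)$ I would simply observe that, by construction, $P_k$ and $Q_k$ are both moduli spaces of $B$-twisted sheaves on the \emph{same} twisted K3 surface $(S,\beta)$, their twisted Mukai vectors differing only in the last entry (cf.\ Lemma~\ref{relPic}). Since $S$ is chosen of Picard rank $1$, Theorem~\ref{Main} (equivalently Claim~\ref{main}) applies verbatim and yields the desired twisted derived equivalence, with twist the Brauer class read off from Proposition~\ref{twistedFM}.

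For $D(Q_k,\cdot)\simeq D(Q_k',\cdot)$ I would use that $Q_k$ carries two Lagrangian fibrations over $\PP^2$, with base the surface $S$ and a second degree-$2$ K3 surface $T$; through the second fibration $Q_k$ is identified with a relative twisted Picard variety over $T$, i.e.\ with a moduli space $M_{(0,t,b)}(T,B_T)$, while $Q_k'$ is its dual, obtained by Lemma~\ref{relPic} from the second fibration and governed by the map $\varphi$ of \eqref{epo} between torsors and Brauer classes (Section~\ref{fibration}). When the Brauer class induced on the $T$-side is preserved, this again reduces directly to Theorem~\ref{Main} applied over $T$. In general, however, $Q_k$ and $Q_k'$ sit over different bases with $B$-lifts differing by the order-two kernel of $\varphi$, and I would instead follow Proposition~\ref{convEPW}: convolve the two fibrewise equivalences into a single kernel $F$ on $Q_k\times Q_k'$, show as in Lemma~\ref{VB} that $F$ is a slope-stable vector bundle of the predicted rank whose algebra $\E nd(F^p)$ is hyperholomorphic for all K\"ahler classes, spread it along a twistor path by Theorem~\ref{main markman}, and compute its Brauer twist as $[-c_1(F^p)/\rk(F^p)]$ via Theorem~\ref{Cardelaru}, the Chern-class computation being the analogue of Lemma~\ref{po}.

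The hard part is this second step. The delicate points are the control of the convolution kernel $F$ — that the generic intersection of two fibres from different fibrations has the expected length, so $F$ is locally free of the predicted rank, and that it is stable with hyperholomorphic $\E nd(F^p)$, exactly as in Lemma~\ref{VB} — together with the explicit evaluation of $c_1(F^p)$ needed to pin down the twist. In the general $k$-torsion setting this twist need not vanish, which is precisely why one obtains a genuinely twisted, rather than an honest, derived equivalence. One must moreover check, for $S$ very general, that the second base surface $T$ again has Picard rank $1$ and that its induced Brauer class is the expected $k$-torsion class, so that Theorem~\ref{Main} and the deformation machinery of Section~\ref{EPW2} genuinely apply on the $T$-side.
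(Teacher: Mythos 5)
Your first step and the primary route of your second step coincide with the paper's argument: the proposition is proved exactly ``as in the previous section'', i.e.\ by applying Theorem \ref{Main} twice --- once over $(S,\beta)$ to the pair $P_k=M_{(0,h,1)}(S,B)$ and $Q_k=M_{(0,h,0)}(S,B)$, whose twisted Mukai vectors differ only in the last entry, and once along the second Lagrangian fibration, with respect to which $Q_k$ and $Q_k'$ are, by the very construction of $Q_k'$ via Lemma \ref{relPic}, relative twisted Picard varieties for one and the same Brauer class on the second side. Your worry about the order-two kernel of $\varphi$ from \eqref{epo} is not an obstruction here: that kernel only says that two \emph{distinct} torsors map to the same Brauer class, and the proposition of Section \ref{fibration} identifies both torsors as moduli spaces of twisted sheaves for that class, again with Mukai vectors differing in the last entry --- which is precisely the hypothesis of Theorem \ref{Main}. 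So no case distinction is needed, and the only genuine check is the one you correctly list at the end (Picard rank one of the twisted surface on the second side, for $S$ very general).

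Your fallback for the ``general'' case, however, is where the proposal goes wrong, in three ways. First, the convolution of the two available fibrewise kernels --- $P_{01}$ on $P_k\times Q_k$ and $P_{01}'$ on $Q_k\times Q_k'$ --- lives on $P_k\times Q_k'$, not on $Q_k\times Q_k'$; this is exactly how the paper uses it \emph{after} the proposition (the sheaf $F^k$, the analogue of $F_0$ on $X_0\times X_0'$ in Proposition \ref{convEPW}), so it could at best address the pair $(P_k,Q_k')$. Second, the convolution presupposes the fibrewise equivalence $D(Q_k,\cdot)\simeq D(Q_k',\cdot)$ as one of its two inputs, so invoking it to prove that equivalence is circular. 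Third, the Lemma \ref{VB}-type properties you would need (local freeness of the convolution, slope stability, hyperholomorphy of $\E nd(F^p)$) are established in the paper only in the degree-two, $2$-torsion case and are explicitly only ``expected'' for general $k$, so the machinery of Theorem \ref{main markman}, Theorem \ref{Cardelaru} and Lemma \ref{po} is not available at this point. Deleting the fallback, and noting that the direct application of Theorem \ref{Main} covers the general case, recovers the paper's proof.
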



Let $P_{01}$ (resp.~$P_{01}'$) be a Fourier-Mukai kernel from the equivalence between $P_k$ and $Q_k$ (resp.~$Q_k$).
Let  $F^k$ be the twisted coherent sheaf obtained by the convolution of $P_{01}$ and $P'_{01}$.
The convolution is a priori more complicated along a pair of points that are the singular loci of singular fibers of abelian fibrations. We expect (cf.~Lemma \ref{VB}) that $F^k$ is, in fact, a twisted vector bundle and can be deformed (as in \cite{Mar1,O2}) as a hyperholomorphic sheaf.

The following problems arise.
\begin{prob} 
    Are all pairs of derived equivalent \HK{} manifolds of $K3^{[2]}$-type related by a composition of the constructions described in this paper?
\end{prob}
In particular are two $K3^{[2]}$ fourfolds of degree $12$ with isomorphic transcendental lattices derived equivalent?
\begin{prob}{(cf.~\cite{BB})}
    Are the equivalences constructed in Theorem \ref{main1} and Proposition \ref{convEPW} the same? What is the group of derived autoequivalences of a double EPW sextic? 
\end{prob}
It is natural to generalise the above constructions to $K3^{[n]}$-type manifolds however new technical difficulties occur when we consider convolutions in higher dimensions (cf.~\cite[\S 3]{Vo}).
In order to solve the Problem \ref{prob} for \HK{} manifolds in general it seems that new constructions of derived equivalences and auto-equivalences are needed (cf.~\cite[Remark 1.2]{T}).
Let us finally recall the following intriguing folklore problem.

\begin{prob}
 For all currently known deformation types of hyper-Kähler manifolds a derived equivalence $D(X) \simeq D(Y)$ implies that $X$ and $Y$ must, in fact, be deformation- equivalent. But in general, is it true for arbitrary hyper-Kähler manifolds?
\end{prob}

\end{document}